\providecommand{\U}[1]{\protect\rule{.1in}{.1in}}
\providecommand{\U}[1]{\protect\rule{.1in}{.1in}}
\newtheorem{theorem}{Theorem}
\newtheorem{definition}[theorem]{Definition}
\newtheorem{remark}[theorem]{Remark}
\newenvironment{proof}[1][Proof]{\noindent\textbf{#1.} }{\ \rule{0.5em}{0.5em}}
\renewcommand{\thefootnote}{\fnsymbol{footnote}}
\begin{document}

\title{A stochastic approach to path--dependent nonlinear Kolmogorov equations via
BSDEs with time--delayed generators and applications to finance}
\author{Francesco Cordoni$^{a}$, Luca Di Persio$^{b}$, Lucian Maticiuc$^{c}$, Adrian Z\u{a}linescu$^{d}$
\bigskip\\
{\small $^{a}$ Department of Mathematics, University of Trento,}\\{\small Via Sommarive, no. 14, Trento, 38123, Italy}\\{\small $^{b}$ Department of Computer Science, University of Verona,}\\{\small Strada le Grazie, no. 15, Verona, 37134, Italy}\\{\small $^{c}$ Faculty of Mathematics, \textquotedblleft Alexandru Ioan
Cuza\textquotedblright\ University,}\\{\small Carol I Blvd., no. 11, Ia\c{s}i, 700506, Romania}\\{\small $^{d}$ \textquotedblleft Octav Mayer \textquotedblright Mathematics
Institute of the Romanian Academy,}\\{\small Carol I Blvd., no. 8, Ia\c{s}i, 700506, Romania}}
\maketitle

\begin{abstract}
We prove the existence of a viscosity solution of the following path dependent
nonlinear Kolmogorov equation:
\[%
\begin{cases}
\partial_{t}u(t,\phi)+\mathcal{L}u(t,\phi)+f(t,\phi,u(t,\phi),\partial
_{x}u(t,\phi) \sigma(t,\phi),(u(\cdot,\phi)) _{t})=0,\;t\in[0,T),\;\phi
\in\mathbb{\Lambda}\, ,\\
u(T,\phi)=h(\phi),\;\phi\in\mathbb{\Lambda},
\end{cases}
\]
where $\mathbb{\Lambda}=\mathcal{C}([0,T];\mathbb{R}^{d})$, $(u(\cdot
,\phi))_{t}:=(u(t+\theta,\phi))_{\theta\in[-\delta,0]}$ and
\[
\mathcal{L}u(t,\phi):=\langle b(t,\phi),\partial_{x}u(t,\phi)\rangle+\dfrac
{1}{2}\mathrm{Tr}\big[\sigma(t,\phi)\sigma^{\ast}(t,\phi)\partial_{xx}
^{2}u(t,\phi)\big].
\]
The result is obtained by a stochastic approach. In particular we prove a new
type of nonlinear Feynman--Kac representation formula associated to a backward
stochastic differential equation with time--delayed generator which is of
non--Markovian type. Applications to the large investor problem and risk
measures via $g$--expectations are also provided.

\end{abstract}


\renewcommand{\thefootnote}{\fnsymbol{footnote}}
\footnotetext{{\scriptsize E-mail addresses: francesco.cordoni@unitn.it
(Francesco Cordoni), luca.dipersio@univr.it (Luca Di Persio),
lucian.maticiuc@tuiasi.ro (Lucian Maticiuc), adrian.zalinescu@gmail.com
(Adrian Z\u{a}linescu)}}

\textbf{AMS Classification subjects:} 35D40, 35K10, 60H10, 60H30\medskip

\textbf{Keywords or phrases: }Path--dependent partial differential equations;
viscosity solutions; Feynman--Kac formula; backward stochastic differential
equations; time--delayed generators.

\section{Introduction}

\label{Section 1}We aim at providing a probabilistic representation of a
viscosity solution to the following path--dependent nonlinear Kolmogorov
equation (PDKE)
\begin{equation}
\left\{
\begin{array}{r}
-\partial _{t}u(t,\phi )-\mathcal{L}u(t,\phi )-f(t,\phi ,u(t,\phi ),\partial
_{x}u\left( t,\phi \right) \sigma (t,\phi ),\left( u(\cdot ,\phi )\right)
_{t})=0,\medskip \\
\multicolumn{1}{l}{u(T,\phi )=h(\phi ),}%
\end{array}%
\right.  \label{PDKE}
\end{equation}%
for $T<\infty $ a fixed time horizon, $t\in \lbrack 0,T)$, $\phi \in \mathbb{%
\Lambda }:=\mathcal{C}([0,T];\mathbb{R}^{d})$, the space of continuous $%
\mathbb{R}^{d}$--valued functions defined on the interval $[0,T]$. For a
fixed delay $\delta >0,$ we have set%
\begin{equation*}
\left( u(\cdot ,\phi )\right) _{t}:=\left( u(t+\theta ,\phi )\right)
_{\theta \in \left[ -\delta ,0\right] }\,.
\end{equation*}%
In equation \eqref{PDKE} we have denoted by $\mathcal{L}$ the second order
differential operator given by
\begin{equation*}
\mathcal{L}u(t,\phi ):=\dfrac{1}{2}\mathrm{Tr}\big[\sigma (t,\phi )\sigma
^{\ast }(t,\phi )\partial _{xx}^{2}u(t,\phi )\big]+\langle b(t,\phi
),\partial _{x}u(t,\phi )\rangle ,
\end{equation*}%
with $b:\left[ 0,T\right] \times \mathbb{\Lambda }\rightarrow \mathbb{R}^{d}$
and $\sigma :\left[ 0,T\right] \times \mathbb{\Lambda }\rightarrow \mathbb{R}%
^{d\times d^{\prime }}$ being two non--anticipative functionals to be better
introduced in subsequent section.

We will prove that, under appropriate assumptions on the coefficients, the
deterministic non--anticipative functional $u:\left[ 0,T\right] \times
\mathbb{\Lambda }\rightarrow \mathbb{R}$ given by the representation formula%
\begin{equation*}
u\left( t,\phi \right) :=Y^{t,\phi }\left( t\right)
\end{equation*}%
is a viscosity solution, in the sense given in \cite{ek-ke-to-zh/14}, to
equation (\ref{PDKE}), where%
\begin{equation*}
(X^{t,\phi }\left( s\right) ,Y^{t,\phi }\left( s\right) ,Z^{t,\phi }\left(
s\right) )_{s\in \left[ t,T\right] }
\end{equation*}%
is the unique solution to the decoupled forward--backward stochastic
differential system on $\left[ t,T\right] $%
\begin{equation}
\left\{
\begin{array}{l}
\displaystyle X^{t,\phi }\left( s\right) =\phi \left( t\right)
+\int_{t}^{s}b(r,X^{t,\phi })dr+\int_{t}^{s}\sigma (r,X^{t,\phi })dW\left(
r\right) ,\medskip \\
\displaystyle Y^{t,\phi }\left( s\right) =h(X^{t,\phi
})+\int_{s}^{T}f(r,X^{t,\phi },Y^{t,\phi }\left( r\right) ,Z^{t,\phi }\left(
r\right) ,Y_{r}^{t,\phi })dr-\int_{s}^{T}Z^{t,\phi }\left( r\right) dW\left(
r\right) ,%
\end{array}%
\right.  \label{FBSDE}
\end{equation}%
with $\left( t,\phi \right) \in \left[ 0,T\right] \times \mathbb{\Lambda },$
where $W$ is a standard Brownian motion.

Here, the notation $Y_{r}^{t,\phi }$ appearing in the generator $f$ of the
backward component in system \eqref{FBSDE} stands for the path of the
process $Y^{t,\phi }$ restricted to $[r-\delta ,r]$, namely%
\begin{equation*}
Y_{r}^{t,\phi }:=(Y^{t,\phi }(r+\theta ))_{\theta \in \left[ -\delta ,0%
\right] }\,.
\end{equation*}%
In system (\ref{FBSDE}) the forward equation is a functional stochastic
differential equation, while the backward equation has time--delayed
generator, that is the generator $f$ can depend, unlike the classical
backward stochastic differential equations (BSDEs), on the past values of $%
Y^{t,\phi }$. %

Let us stress that if we do not consider delay neither in the forward nor in
the backward component, we retrieve standard results of Markovian
forward--backward system, hence in this case $u(t,\phi )=u(t,\phi \left(
t\right) )$ and equation (\ref{PDKE}) becomes%
\begin{equation*}
\left\{
\begin{array}{r}
-\partial _{t}u(t,x)-\mathcal{L}u(t,x)-f(t,x,u(t,x),\partial _{x}u\left(
t,x\right) \sigma (t,x))=0,\medskip \\
\multicolumn{1}{l}{u(T,x)=h(x),\quad t\in \lbrack 0,T),\;x\in \mathbb{R}^{d},
}%
\end{array}%
\right.
\end{equation*}%
with
\begin{equation*}
\mathcal{L}u(t,x):=\dfrac{1}{2}\mathrm{Tr}\big[\sigma (t,x)\sigma ^{\ast
}(t,x)\partial _{xx}^{2}u(t,x)\big]+\langle b(t,x),\partial
_{x}u(t,x)\rangle \,.
\end{equation*}

Let us recall that BSDEs were first introduced by Bismut \cite{bi/73}, in
the linear case, whereas the nonlinear case was considered by Pardoux and
Peng in \cite{pa-pe/90}. Later, in \cite{pa-pe/92,pe/91}, the connection
between BSDEs and semilinear parabolic partial differential equations (PDEs)
was established, proving the nonlinear Feynman--Kac formula for Markovian
equations. Also, similar deterministic representations associated with
suitable PDEs can be proved by taking into account different types of BSDEs,
such as BSDEs with random terminal time, see, e.g. \cite{da-pa/97},
reflected BSDEs, see, e.g. \cite{ka-ka/97}, or also backward stochastic
variational inequalities, see, e.g. \cite{ma-ra/10, ma-ra/15}.

When considering the non-Markovian case, the associated PDE becomes
path-dependent; this is shown for the first time in \cite{pe/10}.
Subsequently, in \cite{pe-wa/11} the authors proved, in the case of smooth
coefficients, the existence and uniqueness of a classical solution for the
path--de\-pen\-dent Kolmogorov equation%
\begin{equation}
\left\{
\begin{array}{r}
-\partial _{t}u(t,\phi )-\dfrac{1}{2}\partial _{xx}^{2}u(t,\phi )-f(t,\phi
,u(t,\phi ),\partial _{x}u\left( t,\phi \right) )=0,\medskip \\
\multicolumn{1}{l}{u(T,\phi )=h(\phi ),\quad t\in \lbrack 0,T),\;\phi \in
\mathbb{\Lambda }.}%
\end{array}%
\right.  \label{PDKE 2}
\end{equation}%
In particular, the authors appealed to a representation formula using the
non--Markovian BSDE:%
\begin{equation}
Y^{t,\phi }\left( s\right) =h(W^{t,\phi })+\int_{s}^{T}f(W^{t,\phi
},Y^{t,\phi }\left( r\right) ,Z^{t,\phi }\left( r\right)
)dr-\int_{s}^{T}Z^{t,\phi }\left( r\right) dW\left( r\right) ,\;~s\in \left[
t,T\right] ,  \label{BSDE time delay_introd 3}
\end{equation}%
with the generator and the final condition depending on the Brownian paths%
\begin{equation*}
\begin{array}{l}
\displaystyle W^{t,\phi }\left( s\right) =\phi \left( t\right) +W\left(
s\right) -W\left( t\right) ,\quad \text{if }s\in \left[ t,T\right] ,\quad
\text{and}\medskip \\
\displaystyle W^{t,\phi }\left( s\right) =\phi \left( s\right) ,\quad \text{%
if }s\in \lbrack 0,t).%
\end{array}%
\end{equation*}%
After that, in order to deal with such PDEs, in \cite{pe/12} a new type of
viscosity solution is introduced.

The definition of viscosity solution we will adopt in the present work is
that introduced in \cite{ek-ke-to-zh/14,ek-to-zh/14, ek-to-zh/14x} for
semilinear and fully non linear path-dependent PDE, by using the framework
of functional It\^{o} calculus first set by Dupire \cite{du/09} and Cont \&
Fourni\'{e} \cite{co-fo/13}.

In the present work we will generalize the results in \cite{ek-ke-to-zh/14}
along two directions. First we will consider a BSDE whose generator depends
not only on past values assumed by a standard Brownian motion $W$, but the
BSDE may depend on a general diffusion process $X$. Secondly, and perhaps
the most important result, we will prove the connection between
path--dependent PDEs and BSDEs with time--delayed generators. We recall that
time--delayed BSDEs were first introduced in \cite{de-im/10} and \cite%
{de-im/10x} where the authors obtained the existence and uniqueness of the
solution of the time--delayed BSDE
\begin{equation}
Y\left( t\right) =\xi
+\int_{t}^{T}f(s,Y_{s},Z_{s})ds-\int_{t}^{T}Z(s)dW(s),\quad 0\leq t\leq T,
\label{BSDE time delay_introd 2}
\end{equation}%
where%
\begin{equation*}
Y_{s}:=(Y(r))_{r\in \left[ 0,s\right] }\quad \text{and}\quad
Z_{s}:=(Z(r))_{r\in \left[ 0,s\right] }\,.
\end{equation*}%
In particular, the aforementioned existence and uniqueness result holds true
if the time horizon $T$ or the Lipschitz constant for the generator $f$ are
sufficiently small. A generalization of the existence and uniqueness theorem
for these kind of BSDEs was made in \cite{ch-na/17} by considering the
equation%
\begin{equation*}
Y\left( t\right) =\xi +\int_{t}^{T}f(s,Y_{s},M_{s})ds+M(T)-M\left( t\right)
,\;0\leq t\leq T,
\end{equation*}%
where the solution is the pair $\left( Y,M\right) $\ such that $Y$\ is an
adapted process and $M$\ is a martingale.$\smallskip $

To our best knowledge, the link between time--delayed BSDEs and
path--dependent PDEs has never been addressed in literature.

We also emphasize that our framework, since the BSDE under study is
time--delayed, requires that the backward equation contains a supplementary
initial condition to be satisfied, namely%
\begin{equation*}
Y^{t,\phi }\left( s\right) =Y^{s,\phi }\left( s\right) ,\quad \text{if }s\in
\lbrack 0,t).
\end{equation*}%
Let us further stress that the Feynman--Kac formula would fail with standard
prolongation $Y^{t,\phi }\left( s\right) =Y^{t,\phi }\left( t\right) $, for $%
s\in \lbrack 0,t)\,$. Although the existence results for equation (\ref{BSDE
time delay_introd 2}) have already been treated in \cite{de-im/10, de-im/10x}%
, our new initial condition imposes a more elaborated proof.$\medskip $

The last part of the paper presents two financial models based on our
theoretical results. In recent years delay equations have been of growing
interest, mainly motivated by many concrete applications where the effect of
delay cannot be neglected, see, e.g. \cite{mo/96, uv-pl/07}. On the
contrary, BSDEs with time delayed generator have been first introduced as a
pure mathematical tool, with no application of interest. Only later in \cite%
{de/11, de/12} the author proposed some financial applications to pricing,
hedging and investment portfolio management, where backward equations with
delayed generator provide a fundamental tool.

Based on the recently introduced path--dependent calculus, together with the
mild assumptions of differentiability required, the probabilistic
representation for a viscosity solution of a non-linear parabolic equations
proved in the present paper finds perhaps its best application in finance.
In fact, a wide variety of financial derivatives can be formally treated
under the theory developed in what follows, from the more standard European
options, to the more exotic path--dependent options, such as Asian options
or look-back options.

We propose here two possible applications of forward--backward stochastic
differential system (\ref{FBSDE}), where the delay in the backward component
arises from two different motivations. The first example we will deal with
is a generalization of a well-known model in finance, where we will consider
the case of a non standard investor acting on a financial market. We will
assume, following \cite{cv-ma/96, ka-pe-qu/01}, that a so called \textit{%
large investor} wishes to invest on a given market, buying or selling a
stock. This investor has the peculiarity that his actions on the market can
affect the stock price. In particular, we will assume that the stock price $%
S $ and the bond $B$ are a function of the large investor's portfolio $%
(X,\pi ) $, $X$ being the value of the portfolio and $\pi $ the amount
invested in the risky asset $S$.

This case has already been treated in the financial literature, see, e.g.
\cite{ka-pe-qu/01}. We further generalize the aforementioned results
assuming a second market imperfection, that is we assume that it might be a
small time delay between the action of the large investor and the reaction
of the market, so that we are led to consider the financial system with the
presence of the past of $X$ in the coefficients $r,\mu ,\sigma :$%
\begin{equation*}
\left\{
\begin{array}{l}
\displaystyle\frac{dB(t)}{B(t)}=r(t,X(t),\pi \left( t\right)
,X_{t})dt\;,\quad B(0)=1\;,\medskip \\
\displaystyle\frac{dS(t)}{S(t)}=\mu \left( t,X(t),\pi \left( t\right)
,X_{t}\right) dt+\sigma (t,X(t),X_{t})dW(t),\quad S(0)=s_{0}>0,%
\end{array}%
\right.
\end{equation*}%
where the notation $X_{t}$ stands for the path $\left( X(t+\theta )\right)
_{\theta \in \lbrack -\delta ,0]}$, $\delta $ being a small enough delay.

The second example we deal with arises from a different situation. Recent
literature in financial mathematics has been focused in how to measure the
riskiness of a given financial investment. To this extent \textit{dynamic
risk measures} have been introduced in \cite{ar-de-eb/02}. In particular,
BSDEs have been shown to be perhaps the best mathematical tool for modelling
dynamic risk measures, via the so called $g$\textit{-expectations}. In \cite%
{de/12} the author proposed a risk measure that takes also into account the
past values assumed by the investment, that is we will suppose that, in
making his future choices, the investor will consider not only the present
value of the investment, but also the values assumed in a sufficiently small
past interval. This has been motivated by empirical studies that show how
the memory effect has a fundamental importance in an investor's choices,
see, e.g. \cite{de/12} and references therein for financial studies on the
memory effect in financial investment. We therefore consider an investor
that tries to quantify the riskiness of a given investment, with $Y$ being
his investment, and will assume that the investor looks at the average value
of his investment in a sufficiently recent past, that is we consider a
generator of the form $\frac{1}{\delta }\,g_{1}\big(\int_{-\delta
}^{0}Y(t+\theta )d\theta \big)\,g_{2}\left( Z\left( t\right) \right) ,$ with
$\delta >0$ being a sufficiently small delay.\medskip

The paper is organized as follows: in Section \ref{SEC:Pre} we introduce
notions which are needed for the functional It\^{o}'s calculus and also the
notion of viscosity solution for path--dependent PDEs. In Section \ref%
{Section 2} we prove the existence and uniqueness of a solution for the
time-delayed BSDE, whereas Section \ref{Section 3} is devoted to the main
results of the present work, that is the proof of the continuity of the
function $u\left( t,\phi \right) :=Y^{t,\phi }\left( t\right) $ as well as
the nonlinear Feynman--Kac formula, that is Theorem \ref{theorem 1}, the
core of the present work. In Section \ref{SEC:FA} we eventually present the
financial applications.

\section{Preliminaries}

\label{SEC:Pre}

\subsection{Pathwise derivatives and functional It\^{o}'s formula}

Let us first introduce the framework on which we shall construct the
solutions of PDKE (\ref{PDKE}). For a deep treatment of functional It\^{o}
calculus we refer the reader to Dupire \cite{du/09} and Cont \& Fourni\'{e}
\cite{co-fo/13}.

Let $\mathbb{\hat{\Lambda}}:=\mathbb{D}\left( \left[ 0,T\right] ;\mathbb{R}%
^{d}\right) $ be the set of c\`{a}dl\`{a}g (i.e., right continuous, with
finite left--hand limits) $\mathbb{R}^{d}$--valued functions, $\hat{B}$ the
canonical process on $\mathbb{\hat{\Lambda}}$, i.e. $\hat{B}(t,\hat{\phi}):=%
\hat{\phi}\left( t\right) $ and $\mathbb{\hat{F}}:=(\mathcal{\hat{F}}%
_{s})_{s\in \left[ 0,T\right] }$ the filtration generated by $\hat{B}$. On $%
\mathbb{\hat{\Lambda}}$ and $\left[ 0,T\right] \times \mathbb{\hat{\Lambda}}$%
, we introduce the following norm and respectively pseudometric, with
respect to whom it becomes a Banach space, respectively a complete
pseudometric space. In this regard, we define, for any $(t,\hat{\phi}%
),(t^{\prime },\hat{\phi}{^{\prime }})\in \left[ 0,T\right] \times \mathbb{%
\hat{\Lambda}}$,
\begin{equation*}
\begin{array}{l}
||\hat{\phi}||_{T}:=\sup_{r\in \left[ 0,T\right] }|\hat{\phi}\left( r\right)
|,\medskip \\
d((t,\hat{\phi}),(t^{\prime },\hat{\phi}{^{\prime }})):=|t-t^{\prime
}|+\sup_{r\in \left[ 0,T\right] }|\hat{\phi}\left( r\wedge t\right) -\hat{%
\phi}{^{\prime }}\left( r\wedge t^{\prime }\right) |\,.%
\end{array}%
\end{equation*}

Let $\hat{u}:\left[ 0,T\right] \times \mathbb{\hat{\Lambda}}\rightarrow
\mathbb{R}$ be an $\mathbb{\hat{F}}$--progressively measurable
non-anticipative process, that is $\hat{u}(t,\hat{\phi})$ depends only on
the restriction of $\hat{\phi}$ on $\left[ 0,t\right] $, i.e. $\hat{u}(t,%
\hat{\phi})=\hat{u}(t,\hat{\phi}\left( \cdot \wedge t\right) )$, for any $(t,%
\hat{\phi})\in \left[ 0,T\right] \times \mathbb{\hat{\Lambda}}$. We say that
$\hat{u}$ is vertically differentiable at $(t,\hat{\phi})\in \left[ 0,T%
\right] \times \mathbb{\hat{\Lambda}}$ if there exists%
\begin{equation*}
\partial _{x_{i}}\hat{u}(t,\hat{\phi}):=\lim_{h\rightarrow 0}\displaystyle%
\frac{\hat{u}(t,\hat{\phi}+h\mathbb{1}_{\left[ t,T\right] }e_{i})-\hat{u}(t,%
\hat{\phi})}{h}
\end{equation*}%
for any $i=\overline{1,d}$, where we have denoted by $\left\{ e_{i}\right\}
_{i=\overline{1,d}}$ the canonical basis of $\mathbb{R}^{d}$. The second
order derivatives, when they exist, are denoted by $\partial
_{x_{i}x_{j}}^{2}\hat{u}(t,\hat{\phi}):=\partial _{x_{i}}(\partial _{x_{j}}%
\hat{u})$, for any $i,j=\overline{1,d}$. Let us further denote by $\partial
_{x}\hat{u}(t,\hat{\phi})$ the gradient vector, that is we have $\partial
_{x}\hat{u}(t,\hat{\phi})=\left( \partial _{x_{1}}\hat{u}(t,\hat{\phi}%
),\dots ,\partial _{x_{d}}\hat{u}(t,\hat{\phi})\right) ,$ and by $\partial
_{xx}^{2}\hat{u}(t,\hat{\phi})$ the $d\times d$--Hessian matrix, that is $%
\partial _{xx}^{2}\hat{u}(t,\hat{\phi})=\left( \partial _{x_{i}x_{j}}^{2}%
\hat{u}(t,\hat{\phi})\right) _{i,j=\overline{1,d}}\,.$

For $t\in \lbrack 0,T]$ and a path $\phi \in \mathbb{\hat{\Lambda}}$, we
denote%
\begin{equation}
\phi _{(t)}:=\phi (\cdot \wedge t)\in \mathbb{\hat{\Lambda}}.
\label{definition delayed term 1}
\end{equation}%
We say that $\hat{u}$ is horizontally differentiable at $(t,\hat{\phi})\in %
\left[ 0,T\right] \times \mathbb{\hat{\Lambda}}$ if there exists%
\begin{equation*}
\partial _{t}\hat{u}(t,\hat{\phi}):=\lim_{h\rightarrow 0_{+}}\displaystyle%
\frac{\hat{u}(t+h,\hat{\phi}_{\left( t\right) })-\hat{u}(t,\hat{\phi})}{h},
\end{equation*}%
for $t\in \lbrack 0,T)$ and $\partial _{t}\hat{u}(T,\hat{\phi}%
):=\lim_{t\rightarrow T_{-}}\partial _{t}\hat{u}(t,\hat{\phi})$.

If $\hat{u}:\left[ 0,T\right] \times \mathbb{\hat{\Lambda}}\rightarrow
\mathbb{R}$ is non-anticipative, we write $\hat{u}\in \mathcal{C}(\left[ 0,T%
\right] \times \mathbb{\hat{\Lambda}})$ if $\hat{u}$ is continuous on $\left[
0,T\right] \times \mathbb{\hat{\Lambda}}$ under the pseudometric $d;$ we
write that $\hat{u}\in \mathcal{C}_{b}(\left[ 0,T\right] \times \mathbb{\hat{%
\Lambda}})$ if $\hat{u}\in \mathcal{C}(\left[ 0,T\right] \times \mathbb{\hat{%
\Lambda}})$ and $\hat{u}$ is bounded on $\left[ 0,T\right] \times \mathbb{%
\hat{\Lambda}}$. Eventually we write $\hat{u}\in \mathcal{C}_{b}^{1,2}(\left[
0,T\right] \times \mathbb{\hat{\Lambda}})$ if $\hat{u}\in \mathcal{C}(\left[
0,T\right] \times \mathbb{\hat{\Lambda}})$ and the derivatives $\partial _{x}%
\hat{u}$, $\partial _{xx}^{2}\hat{u}$, $\partial _{t}\hat{u}$ exist and they
are continuous and bounded.

Having introduced the above notations by following \cite{du/09}, we will now
work with processes $u:\left[ 0,T\right] \times \mathbb{\Lambda }\rightarrow
\mathbb{R},$ with $\mathbb{\Lambda }$ being the space of continuous paths,
i.e.%
\begin{equation*}
\mathbb{\Lambda }=\mathcal{C}([0,T];\mathbb{R}^{d}).
\end{equation*}%
Let $B$ be the canonical process on $\Lambda $, i.e. $B(t,\phi ):=\phi
\left( t\right) $ and $\mathbb{F}:=(\mathcal{F}_{s})_{s\in \left[ 0,T\right]
}$ the filtration generated by $B$.

From the fact that $\mathbb{\Lambda }$ is a closed subspace of $\mathbb{\hat{%
\Lambda}}$, we have $(\Lambda ,||\cdot ||_{T})$ is also a Banach space;
similarly, we claim that $(\left[ 0,T\right] \times \mathbb{\Lambda },d)$ is
a complete pseudometric space. As above, if $u:\left[ 0,T\right] \times
\mathbb{\Lambda }\rightarrow \mathbb{R}$ is a non-anticipative process, we
write that $u\in \mathcal{C}(\left[ 0,T\right] \times \mathbb{\Lambda })$ if
$u$ is continuous on $\left[ 0,T\right] \times \mathbb{\Lambda }$ under the
pseudometric $d$; if, moreover, $u$ is continuous and bounded on $\left[ 0,T%
\right] \times \mathbb{\Lambda }$, we write $u\in \mathcal{C}_{b}(\left[ 0,T%
\right] \times \mathbb{\Lambda })$. Eventually, following \cite%
{ek-ke-to-zh/14}, we write that $u\in \mathcal{C}_{b}^{1,2}(\left[ 0,T\right]
\times \mathbb{\Lambda })$ if there exists $\hat{u}\in \mathcal{C}_{b}^{1,2}(%
\left[ 0,T\right] \times \mathbb{\hat{\Lambda}})$ such that $\hat{u}\big|_{%
\left[ 0,T\right] \times \mathbb{\Lambda }}=u$ and by definition we take $%
\partial _{t_{i}}u:=\partial _{t}\hat{u}$, $\partial _{x}u:=\partial _{x}%
\hat{u}$, $\partial _{xx}^{2}u:=\partial _{xx}^{2}\hat{u}$; notice that
definitions are independent of the choice of $\hat{u}$.

We introduce now the shifted spaces of c\`{a}dl\`{a}g and continuous paths.
If $t\in \left[ 0,T\right] $, $\hat{B}^{t}$ is the shifted canonical process
on $\mathbb{\hat{\Lambda}}^{t}:=\mathbb{D}\left( \left[ t,T\right] ;\mathbb{R%
}^{d}\right) $, $\mathbb{\hat{F}}^{t}:=(\mathcal{\hat{F}}_{s}^{t})_{s\in %
\left[ t,T\right] }$ is the shifted filtration generated by $\hat{B}^{t}$,%
\begin{equation*}
\begin{array}{l}
||\hat{\phi}||_{T}^{t}:=\sup_{r\in \left[ t,T\right] }|\hat{\phi}\left(
r\right) |,\medskip \\
d^{t}((s,\hat{\phi}),(s^{\prime },\hat{\phi}{^{\prime }})):=|s-s^{\prime
}|+\sup_{r\in \left[ t,T\right] }|\hat{\phi}\left( r\wedge s\right) -\hat{%
\phi}{^{\prime }}\left( r\wedge s^{\prime }\right) |,%
\end{array}%
\end{equation*}%
for any $(s,\hat{\phi}),(s^{\prime },\hat{\phi}{^{\prime }})\in \left[ t,T%
\right] \times \mathbb{\hat{\Lambda}}^{t}$. Analogously we define the spaces
$\mathcal{C}(\left[ t,T\right] \times \mathbb{\hat{\Lambda}}^{t})$, $%
\mathcal{C}_{b}(\left[ t,T\right] \times \mathbb{\hat{\Lambda}}^{t})$ and $%
\mathcal{C}_{b}^{1,2}(\left[ t,T\right] \times \mathbb{\hat{\Lambda}}^{t}).$
Similarly, we denote $\mathbb{\Lambda }^{t}:=\mathcal{C}([t,T];\mathbb{R}%
^{d})$, $B^{t}$ the shifted canonical process on $\mathbb{\Lambda }^{t}$, $%
\mathbb{F}^{t}:=(\mathcal{F}_{s}^{t})_{s\in \left[ t,T\right] }$ the shifted
filtration generated by $B^{t}$ and we introduce the spaces $\mathcal{C}(%
\left[ t,T\right] \times \mathbb{\Lambda }^{t})$, $\mathcal{C}_{b}(\left[ t,T%
\right] \times \mathbb{\Lambda }^{t})$ and $\mathcal{C}_{b}^{1,2}(\left[ t,T%
\right] \times \mathbb{\Lambda }^{t})$.

Let us denote by $\mathcal{T}$ the set of all $\mathbb{F}$--stopping times $%
\tau $ such that for all $t\in \lbrack 0,T)$, the set $\left\{ \phi \in
\mathbb{\Lambda }:\tau \left( \phi \right) >t\right\} $ is an open subset of
$\left( \mathbb{\Lambda },||\cdot ||_{T}\right) $ and $\mathcal{T}^{t}$ the
be the set of all $\mathbb{F}$--stopping times $\tau $ such that for all $%
s\in \lbrack t,T)$, the set $\left\{ \phi \in \mathbb{\Lambda }^{t}:\tau
\left( \phi \right) >s\right\} $ is an open subset of $\left( \mathbb{%
\Lambda }^{t},||\cdot ||_{T}^{t}\right) $.

For a c\`{a}dl\`{a}g function $\phi \in \mathbb{D}\left( \left[ -\delta ,T%
\right] ;\mathbb{R}^{d}\right) $, we denote%
\begin{equation}
\phi _{t}:=\left( \phi (t+\theta )\right) _{\theta \in \lbrack -\delta
,0]}\,.  \label{definition delayed term 2}
\end{equation}%
We conclude this subsection by recalling the functional version of It\^{o}'s
formula (see Cont \& Fourni\'{e} \cite[Theorem 4.1]{co-fo/13}).

\begin{theorem}[Functional It\^{o}'s formula]
\label{functional Ito formula}Let $A$ be a $d$--dimensional It\^{o} process,
i.e. $A:\left[ 0,T\right] \times \mathbb{\Lambda }\rightarrow \mathbb{R}^{d}$
is a continuous $\mathbb{R}^{d}$--valued semimartingale defined on the
probability space $\left( \mathbb{\Lambda },\mathbb{F},\mathbb{P}\right) $
which admits the representation%
\begin{equation*}
A\left( t\right) =A\left( 0\right) +\int_{0}^{t}b\left( r\right)
dr+\int_{0}^{t}\sigma (r)dB\left( r\right) ,\quad \text{for all }t\in \left[
0,T\right] ,\quad \mathbb{P}\text{--a.s.,}
\end{equation*}%
where $b,\sigma $ are progressively measurable stochastic processes such
that $\int_{0}^{T}\big(\left\vert b\left( r\right) \right\vert +\left\vert
\sigma \left( r\right) \right\vert ^{2}\big)dr<\infty ,$ $\mathbb{P}$--a.s.

If $F\in \mathcal{C}_{b}^{1,2}(\left[ 0,T\right] \times \mathbb{\hat{\Lambda}%
})$ then, for any $t\in \lbrack 0,T)$, the following change of variable
formula holds true:%
\begin{equation*}
\begin{array}{l}
\displaystyle F\left( t,A_{\left( t\right) }\right) =F\left( 0,A_{\left(
0\right) }\right) +\int_{0}^{t}\partial _{t}F\left( r,A_{\left( r\right)
}\right) dr+\int_{0}^{t}\left\langle \partial _{x}F\left( r,A_{\left(
r\right) }\right) ,b\left( r\right) \right\rangle dr\medskip \\
\displaystyle\quad \quad \quad \quad +\frac{1}{2}\int_{0}^{t}\mathrm{Tr}\big[%
\sigma \left( r\right) \sigma ^{\ast }\left( r\right) \,\partial
_{xx}^{2}F(r,A_{\left( r\right) })\big]dr+\int_{0}^{t}\left\langle \partial
_{x}F\left( r,A_{\left( r\right) }\right) ,\sigma \left( r\right) dB\left(
r\right) \right\rangle ,\quad \mathbb{P}\text{--a.s..}%
\end{array}%
\end{equation*}
\end{theorem}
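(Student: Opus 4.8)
The plan is to follow the classical discretization scheme of functional It\^{o} calculus: approximate the stopped semimartingale path $A_{(r)}=A(\cdot\wedge r)$ by piecewise--constant c\`{a}dl\`{a}g paths along a refining sequence of partitions, use the horizontal and vertical derivatives to obtain a discrete change of variable formula, and pass to the limit exploiting the regularity guaranteed by $F\in\mathcal{C}_{b}^{1,2}$.

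First I would fix $t\in[0,T)$ and a sequence of partitions $\pi_{n}:0=t_{0}^{n}<t_{1}^{n}<\cdots<t_{N_{n}}^{n}=t$ with mesh $|\pi_{n}|\to0$. For each $n$ introduce the step path $A^{n}$ which equals $A(t_{i}^{n})$ on $[t_{i}^{n},t_{i+1}^{n})$; since $A$ is continuous, $A^{n}$ converges to $A$ uniformly (a.s.), hence $A^{n}_{(\cdot)}\to A_{(\cdot)}$ in the pseudometric $d$, and continuity of $F$ gives $F(t,A^{n}_{(t)})\to F(t,A_{(t)})$. I would then telescope $F(t,A^{n}_{(t)})-F(0,A^{n}_{(0)})=\sum_{i}\big[F(t_{i+1}^{n},A^{n}_{(t_{i+1}^{n})})-F(t_{i}^{n},A^{n}_{(t_{i}^{n})})\big]$ and split each summand into a horizontal increment, in which time advances from $t_{i}^{n}$ to $t_{i+1}^{n}$ while the path is frozen at $A^{n}_{(t_{i}^{n})}$, and a vertical increment, in which the value at time $t_{i+1}^{n}$ is changed from $A(t_{i}^{n})$ to $A(t_{i+1}^{n})$.

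Next, for the horizontal piece I would use horizontal differentiability to write $F(t_{i+1}^{n},A^{n}_{(t_{i}^{n})})-F(t_{i}^{n},A^{n}_{(t_{i}^{n})})=\int_{t_{i}^{n}}^{t_{i+1}^{n}}\partial_{t}F(r,A^{n}_{(t_{i}^{n})})\,dr$, which upon summation converges to $\int_{0}^{t}\partial_{t}F(r,A_{(r)})\,dr$. For the vertical piece, observe that the map $x\mapsto F\big(t_{i+1}^{n},A^{n}_{(t_{i}^{n})}+(x-A(t_{i}^{n}))\mathbb{1}_{[t_{i+1}^{n},T]}\big)$ is, by vertical differentiability, a $C^{2}$ function on $\mathbb{R}^{d}$ with gradient $\partial_{x}F$ and Hessian $\partial_{xx}^{2}F$; a second--order Taylor expansion along the increment $\Delta_{i}A:=A(t_{i+1}^{n})-A(t_{i}^{n})$ yields a first--order term $\langle\partial_{x}F,\Delta_{i}A\rangle$ and a second--order term $\tfrac{1}{2}\langle\partial_{xx}^{2}F\,\Delta_{i}A,\Delta_{i}A\rangle$ plus a remainder. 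Summing the first--order terms and invoking the convergence of the Riemann--It\^{o} sums gives $\int_{0}^{t}\langle\partial_{x}F,dA\rangle=\int_{0}^{t}\langle\partial_{x}F,b\rangle\,dr+\int_{0}^{t}\langle\partial_{x}F,\sigma\,dB\rangle$, while summing the second--order terms and using the quadratic variation of $A$ produces $\tfrac{1}{2}\int_{0}^{t}\mathrm{Tr}\big[\sigma\sigma^{\ast}\partial_{xx}^{2}F\big]\,dr$. Collecting the three contributions reproduces the stated formula.

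The hard part will be the limit passage, and this is exactly where the hypothesis $F\in\mathcal{C}_{b}^{1,2}$ is indispensable. Boundedness and $d$--continuity of $\partial_{t}F$, $\partial_{x}F$ and $\partial_{xx}^{2}F$ allow one to bound the horizontal and Taylor remainders uniformly over the partition and to show that their sum vanishes as $|\pi_{n}|\to0$, while the $L^{2}$--integrability of $b$ and $\sigma$ yields the convergence of the stochastic Riemann sums $\sum_{i}\langle\partial_{x}F(t_{i}^{n},A^{n}_{(t_{i}^{n})}),\sigma(t_{i}^{n})\,\Delta_{i}B\rangle$ to the It\^{o} integral $\int_{0}^{t}\langle\partial_{x}F(r,A_{(r)}),\sigma(r)\,dB(r)\rangle$. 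The most delicate point is controlling the discrepancy between evaluating the derivatives along the step path $A^{n}_{(\cdot)}$ and along the true stopped path $A_{(\cdot)}$, which rests on the uniform continuity of the continuous semimartingale together with the $d$--continuity of the derivatives; this is the technical core of the argument, for which I would refer to Cont \& Fourni\'{e} \cite[Theorem 4.1]{co-fo/13}.
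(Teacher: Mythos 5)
The paper does not prove this statement: it is recalled verbatim from Cont \& Fourni\'e \cite[Theorem 4.1]{co-fo/13}, and the citation is the entirety of the paper's justification. Your sketch reproduces the standard discretization argument of that reference (horizontal/vertical decomposition along a refining partition, second--order Taylor expansion in the vertical direction, convergence of the Riemann--It\^o sums and of the quadratic--variation term), and it is a correct outline of that proof; since you yourself defer the delicate limit passage to the same citation, your argument is in substance the same as the paper's.
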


\subsection{Path--dependent PDEs}

We give now the notion of viscosity solution to equation \eqref{PDKE} as it
was first introduced in \cite{ek-ke-to-zh/14} (see also \cite%
{ek-to-zh/14,ek-to-zh/14x}).

Let $\left( t,\phi \right) \in \left[ 0,T\right] \times \mathbb{\Lambda }$
be fixed and $\left( W\left( t\right) \right) _{t\geq 0}$ be a $d^{\prime }$%
--dimensional standard Brownian motion defined on some complete probability
space $(\Omega ,\mathcal{G},\mathbb{P})$. We denote by $\mathbb{G}%
^{t}=\left( \mathcal{G}_{s}^{t}\right) _{s\in \left[ 0,T\right] }$ the
natural filtration generated by $\left( (W\left( s\right) -W\left( t\right) )%
\mathbb{1}_{\{s\geq t\}}\right) _{s\in \left[ 0,T\right] }\,,$ augmented by $%
\mathcal{N}$, the set of$\;\mathbb{P}$--null events of $\mathcal{G}.$

Let us take $L\geq 0$ and $t<T$. We denote by $\mathcal{U}_{t}^{L}$ the
space of $\mathbb{G}^{t}$--progressively measurable $\mathbb{R}^{d}$--valued
processes $\lambda $ such that $|\lambda |\leq L$. We define a new
probability measure $\mathbb{P}^{t,\lambda }$ by $d\mathbb{P}^{t,\lambda
}:=M^{t,\lambda }\left( T\right) d\mathbb{P}$, where
\begin{equation*}
M^{t,\lambda }\left( s\right) :=\exp \Big(\displaystyle\int_{t}^{s}\lambda
\left( r\right) dW\left( r\right) -\frac{1}{2}\int_{t}^{s}|\lambda \left(
r\right) |^{2}dr\Big),\quad \mathbb{P}\text{--a.s..}
\end{equation*}

Under some suitable assumptions on the coefficients, to be specified later
on (see Theorem \ref{theorem 0}), there exists and is unique a continuous
and adapted stochastic process $\left( X^{t,\phi }\left( s\right) \right)
_{s\in \left[ 0,T\right] }$ such that, for given $\left( t,\phi \right) \in %
\left[ 0,T\right] \times \mathbb{\Lambda },$%
\begin{equation*}
\left\{
\begin{array}{l}
\displaystyle X^{t,\phi }\left( s\right) =\phi \left( t\right)
+\int_{t}^{s}b(r,X^{t,\phi })dr+\int_{t}^{s}\sigma (r,X^{t,\phi })dW\left(
r\right) ,\quad s\in \left[ t,T\right] ,\medskip \\
\displaystyle X^{t,\phi }\left( s\right) =\phi \left( s\right) ,\quad s\in
\lbrack 0,t).%
\end{array}%
\right.
\end{equation*}%
We are now ready to define the space of the test functions,%
\begin{equation*}
\begin{array}{l}
\displaystyle\underline{\mathcal{A}}^{L}u\left( t,\phi \right) :=\Big\{%
\varphi \in \mathcal{C}_{b}^{1,2}(\left[ 0,T\right] \times \mathbb{\Lambda }%
):\exists \tau _{0}\in \mathcal{T}_{+}^{t}\,,\medskip \\
\displaystyle\quad \quad \quad \quad \quad \quad \quad \quad \quad \quad
\quad \quad \varphi \left( t,\phi \right) -u\left( t,\phi \right)
=\min\nolimits_{\tau \in \mathcal{T}^{t}}\underline{\mathcal{E}}_{t}^{L}\big[%
(\varphi -u)\big(\tau \wedge \tau _{0},X^{t,\phi }\big)\big]\Big\}%
\end{array}%
\end{equation*}%
and%
\begin{equation*}
\begin{array}{l}
\displaystyle\overline{\mathcal{A}}^{L}u\left( t,\phi \right) :=\Big\{%
\varphi \in \mathcal{C}_{b}^{1,2}(\left[ 0,T\right] \times \mathbb{\Lambda }%
):\exists \,\tau _{0}\in \mathcal{T}_{+}^{t}\,,\medskip \\
\displaystyle\quad \quad \quad \quad \quad \quad \quad \quad \quad \quad
\quad \quad \varphi \left( t,\phi \right) -u\left( t,\phi \right)
=\max\nolimits_{\tau \in \mathcal{T}^{t}}\overline{\mathcal{E}}_{t}^{L}\big[%
(\varphi -u)\big(\tau \wedge \tau _{0},X^{t,\phi }\big)\big]\Big\},%
\end{array}%
\end{equation*}%
where $\mathcal{T}_{+}^{t}:=\left\{ \tau \in \mathcal{T}^{t}:\tau >t\right\}
$, if $t<T$ and $\mathcal{T}_{+}^{T}:=\left\{ T\right\} $. Also, for any $%
\xi \in L^{2}\left( \mathcal{F}_{T}^{t};\mathbb{P}\right) $,%
\begin{equation*}
\underline{\mathcal{E}}_{t}^{L}\left( \xi \right) :=\inf\nolimits_{\lambda
\in \mathcal{U}_{t}^{L}}\mathbb{E}^{\mathbb{P}^{t,\lambda }}\left( \xi
\right) \quad \text{and}\quad \overline{\mathcal{E}}_{t}^{L}\left( \xi
\right) :=\sup\nolimits_{\lambda \in \mathcal{U}_{t}^{L}}\mathbb{E}^{\mathbb{%
P}^{t,\lambda }}\left( \xi \right)
\end{equation*}%
are nonlinear expectations.$\medskip $

We are now able to give the definition of a viscosity solution of the
functional PDE (\ref{PDKE}), see, e.g. \cite[Definition 3.3]{ek-ke-to-zh/14}.

\begin{definition}
\label{definition 1}Let $u\in \mathcal{C}_{b}(\left[ 0,T\right] \times
\mathbb{\Lambda })$ be such that $u(T,\phi )=h\left( \phi \right) $, for all
$\phi \in \mathbb{\Lambda }$.

\noindent $\left( a\right) $ For any $L\geq 0$, we say that $u$ is a
viscosity $L$--subsolution of (\ref{PDKE}) if at any point $\left( t,\phi
\right) \in \left[ 0,T\right] \times \mathbb{\Lambda }$, for any $\varphi
\in \underline{\mathcal{A}}^{L}u\left( t,\phi \right) $, it holds
\begin{equation*}
-\partial _{t}\varphi (t,\phi )-\mathcal{L}\varphi (t,\phi )-f(t,\phi
,u(t,\phi ),\partial _{x}\varphi \left( t,\phi \right) \sigma (t,\phi
),\left( u(\cdot ,\phi )\right) _{t})\leq 0.
\end{equation*}%
\noindent $\left( b\right) $ For any $L\geq 0$, we say that $u$ is a
viscosity $L$--supersolution of (\ref{PDKE}) if at any point $\left( t,\phi
\right) \in \left[ 0,T\right] \times \mathbb{\Lambda }$, for any $\varphi
\in \overline{\mathcal{A}}^{L}u\left( t,\phi \right) $, we have%
\begin{equation*}
-\partial _{t}\varphi (t,\phi )-\mathcal{L}\varphi (t,\phi )-f(t,\phi
,u(t,\phi ),\partial _{x}\varphi \left( t,\phi \right) \sigma (t,\phi
),\left( u(\cdot ,\phi )\right) _{t})\geq 0.
\end{equation*}%
\noindent $\left( c\right) $ We say that $u$ is a viscosity subsolution
(respectively, supersolution) of (\ref{PDKE}) if $u$ is a viscosity $L$%
--subsolution (respectively, $L$--supersolution) of (\ref{PDKE}) for some $%
L\geq 0.\medskip $

\noindent$\left( d\right) $ We say that $u$ is a viscosity solution of (\ref%
{PDKE}) if $u$ is a viscosity subsolution and supersolution of (\ref{PDKE}).
\end{definition}

\begin{remark}
It is easy to see that this definition is equivalent to the classical one in
the Markovian framework, see, e.g. \cite{ek-ke-to-zh/14}.
\end{remark}

Let us stress that if $u$ is a function from $\mathcal{C}_{b}^{1,2}(\left[
0,T\right] \times \mathbb{\Lambda })$, then it is easily seen that $u$ is a
viscosity solution of (\ref{PDKE}) if and only if $u$ is a classical
solution for (\ref{PDKE}). Indeed, if $u$ is a viscosity solution then $u\in
\underline{\mathcal{A}}^{L}u\left( t,\phi \right) \cap \overline{\mathcal{A}}%
^{L}u\left( t,\phi \right) $ and therefore $u$ satisfies (\ref{PDKE}). For
the reverse statement one can use the nonlinear Feynman--Kac formula proved
in this new framework (see Theorem \ref{Feynman-Kac formula} below),
together with functional It\^{o}'s formula in order to compute $u\left(
s,X^{t,\phi }\right) $ and the existence and uniqueness result for the
corresponding stochastic system (\ref{FBSDE}).

Let us also mention that, in accordance with the standard theory of
viscosity solutions, the viscosity property introduced above is a local
property, i.e. to check that $u$ is a viscosity solution in $\left( t,\phi
\right) $ it is sufficient to know the value of $u$ on the interval $\left[
t,\tau _{\epsilon }\right] $, where $\epsilon >0$ is arbitrarily fixed and $%
\tau _{\epsilon }\in \mathcal{T}_{+}^{t}$ is given by $\tau _{\epsilon
}:=\inf \left\{ s>t:\left\vert \phi \left( s\right) \right\vert \geq
\epsilon \right\} \wedge \left( t+\epsilon \right) $.

Eventually, let us notice that since $b$ and $\sigma$ are Lipschitz, we have
uniqueness in law for $X^{t,\phi}$; also, since the filtration on $(\Omega,%
\mathcal{G},\mathbb{P}) $ is generated by $W$, every progressively
measurable processes $\lambda$ is a functional of $W$. Therefore, the spaces
of test functions and the above definition are independent on the choice of $%
(\Omega,\mathcal{G},\mathbb{P})$ and $W$.

\section{The forward--backward delayed system}

\label{Section 2}

We are now able to state the existence and uniqueness results for a delayed
forward-backward system, where both the forward and the backward component
exhibit a delayed behaviour, that is we will assume that the generator of
the backward equation may depend on past values assumed by its solution $%
(Y,Z)$. In complete generality, since we will need these results in next
sections, we will allow the solution to depend on a general initial time and
initial values. Also we remark that in order to ensure the existence and
uniqueness result, we need to equip the backward equation with a suitable
condition in the time interval $[0,t)$, $t$ being the initial time, fact
that implies a different proof than the one provided in \cite{de-im/10}.

The main goal is to find a family $\left( X^{t,\phi },Y^{t,\phi },Z^{t,\phi
}\right) _{(t,\phi )\in \lbrack 0,T]\times \mathbb{\Lambda }}$ of stochastic
processes such that the following decoupled forward--backward system holds $%
\mathbb{P}$--a.s.%
\begin{equation}
\left\{
\begin{array}{l}
\displaystyle X^{t,\phi }\left( s\right) =\phi \left( t\right)
+\int_{t}^{s}b(r,X^{t,\phi })dr+\int_{t}^{s}\sigma (r,X^{t,\phi })dW\left(
r\right) ,\;s\in \left[ t,T\right] ,\medskip \\
\displaystyle X^{t,\phi }\left( s\right) =\phi \left( s\right) ,\quad s\in
\lbrack 0,t),\medskip \\
\multicolumn{1}{r}{\displaystyle Y^{t,\phi }\left( s\right) =h(X^{t,\phi
})+\int_{s}^{T}F(r,X^{t,\phi },Y^{t,\phi }\left( r\right) ,Z^{t,\phi }\left(
r\right) ,Y_{r}^{t,\phi },Z_{r}^{t,\phi })dr\quad \medskip} \\
\multicolumn{1}{r}{\displaystyle-\int_{s}^{T}Z^{t,\phi }\left( r\right)
dW\left( r\right) ,\quad s\in \left[ t,T\right] ,\medskip} \\
\displaystyle Y^{t,\phi }\left( s\right) =Y^{s,\phi }\left( s\right) ,\quad
Z^{t,\phi }\left( s\right) =0,\quad s\in \lbrack 0,t).%
\end{array}%
\right.  \label{FBSDE 2}
\end{equation}

Let us stress once more that in both the forward and backward equation, the
values of $X^{t,\phi }$ and $\left( Y^{t,\phi },Z^{t,\phi }\right) $ need to
be known in the time interval $[0,t]$ and respectively $\left[ t-\delta ,t%
\right] ;$ this is one reason for which we have to impose such initial
conditions. The above initial condition for $Y$ is absolutely necessary in
view of the Feynman--Kac formula, which will be proven later. We also
prolong, by convention, $Y^{t,\phi }$ by $Y^{t,\phi }(0)$ on the negative
real axis (this is needed in the case that $t<\delta $). For the sake of
simplicity, we will take $Z^{t,\phi }\left( s\right) :=0\ $and $F\left(
s,\cdot ,\cdot ,\cdot ,\cdot ,\cdot \right) :=0$ whenever $s$ becomes
negative.

\subsection{The forward path-dependent SDE}

Let us first focus on the forward component $X$ appearing in system %
\eqref{FBSDE 2}; the next theorem states the existence and the uniqueness,
as well as estimates, for the process $\left( X^{t,\phi }\left( r\right)
\right) _{r\in \left[ 0,T\right] }$.

The existence result is a classical one (see, e.g. \cite{mo/96} or \cite%
{mo/84}) and the estimates can be obtained by applying It\^{o}'s formula
together with assumptions $\mathrm{(A}_{1}\mathrm{)}$--$\mathrm{(A}_{2}%
\mathrm{)}$, see, e.g. \cite{za/12}, and for these reasons we will not state
the proof.

In what follows we will assume the following to hold.

Let us consider two non-anticipative functionals $b:\left[ 0,T\right] \times%
\mathbb{\Lambda}\rightarrow\mathbb{R}^{d}$ and $\sigma:\left[ 0,T\right]
\times\mathbb{\Lambda}\rightarrow\mathbb{R}^{d\times d^{\prime}}$ such that

\begin{description}
\item[$\mathrm{(A}_{1}\mathrm{)}$] $b$ and $\sigma$ are continuous;

\item[$\mathrm{(A}_{2}\mathrm{)}$] there exists $\ell>0$ such that for any $%
t\in\left[ 0,T\right] $, $\phi,\phi^{\prime}\in\mathbb{\Lambda},$%
\begin{equation*}
|b(t,\phi)-b(t,\phi^{\prime})|+|\sigma(t,\phi)-\sigma(t,\phi^{\prime})|\leq%
\ell||\phi-\phi^{\prime}||_{T}\,.
\end{equation*}
\end{description}

\begin{theorem}
\label{theorem 0}Let $b,\sigma$ satisfying assumptions $\mathrm{(A}_{1}%
\mathrm{)}$--$\mathrm{(A}_{2}\mathrm{)}$. Let $\left( t,\phi\right)
,(t^{\prime},\phi^{\prime})\in\left[ 0,T\right] \times\mathbb{\Lambda}$ be
given. Then there exists a unique continuous and adapted stochastic process $%
\left( X^{t,\phi}\left( s\right) \right) _{s\in\left[ 0,T\right] }$ such that%
\begin{equation}
\left\{
\begin{array}{l}
\displaystyle X^{t,\phi}\left( s\right) =\phi\left( t\right)
+\int_{t}^{s}b(r,X^{t,\phi})dr+\int_{t}^{s}\sigma(r,X^{t,\phi})dW\left(
r\right) ,\;s\in\left[ t,T\right] ,\medskip \\
\displaystyle X^{t,\phi}\left( s\right) =\phi\left( s\right)
,\;s\in\lbrack0,t).%
\end{array}
\right.  \label{FSDE}
\end{equation}

Moreover, for any $q\geq 1$, there exists $C=C\left( q,T,\ell \right) >0$
such that%
\begin{equation*}
\begin{array}{l}
\displaystyle\mathbb{E}\big(||X^{t,\phi }||_{T}^{2q}\big)\leq C(1+||\phi
||_{T}^{2q}),\medskip \\
\multicolumn{1}{r}{\displaystyle\mathbb{E}\big(||X^{t,\phi }-X^{t^{\prime
},\phi ^{\prime }}||_{T}^{2q}\big)\leq C\Big(||\phi -\phi ^{\prime
}||_{T}^{2q}+(1+||\phi ||_{T}^{2q}+||\phi ^{\prime }||_{T}^{2q})\cdot
|t-t^{\prime }|^{q}\medskip} \\
\multicolumn{1}{r}{\displaystyle+\sup\nolimits_{r\in \left[ t\wedge
t^{\prime },t\vee t^{\prime }\right] }|\phi \left( t\right) -\phi \left(
r\right) |^{2q}\Big),\medskip} \\
\displaystyle\mathbb{E}\big(\sup\nolimits_{\substack{ s,r\in \left[ t,T%
\right]  \\ \left\vert s-r\right\vert \leq \epsilon }}|X^{t,\phi }\left(
s\right) -X^{t,\phi }\left( r\right) |^{2q}\big)\leq C(1+||\phi
||_{T}^{2q})\epsilon ^{q-1},\quad \text{for all }\epsilon >0.%
\end{array}%
\end{equation*}
\end{theorem}

\subsection{The backward delayed SDE}

Let us now consider the delayed backward SDE appearing in \eqref{FBSDE 2}.
In what follows, $d$ and $d^{\prime }$ are previously fixed constants,
whereas $m\in \mathbb{N}^{\ast }$ is a new fixed constant. Let us then
introduce the main reference spaces we will consider.

\begin{definition}
\begin{description}
\item

\item[(i)] let $\mathcal{H}_{t}^{2,m\times d^{\prime}}$ denote the space of
(equivalence classes of) $\mathbb{G}^{t}$--progressively measurable
processes $Z:\Omega\times\left[ 0,T\right] \rightarrow\mathbb{R}^{m\times
d^{\prime}}$ such that $\mathbb{E}\left[ \displaystyle%
\int_{0}^{T}|Z(s)|^{2}ds\right] <\infty\,;$

\item[(ii)] let $\mathcal{S}_{t}^{2,m}$ the space of (equivalence classes
of) continuous $\mathbb{G}^{t}$--progressively measurable processes $%
Y:\Omega \times\left[ 0,T\right] \rightarrow\mathbb{R}^{m}$ such that $%
\mathbb{E}\left[ \displaystyle\sup_{0\leq s\leq T}|Y(s)|^{2}\right] <\infty
\,.\medskip$
\end{description}

Also we will equip the spaces $\mathcal{H}_{t}^{2,m\times d^{\prime}}$ and $%
\mathcal{S}_{t}^{2,m}$ with the following norms
\begin{equation*}
\Vert Z\Vert_{\mathcal{H}_{t}^{2,m\times d^{\prime}}}^{2}=\mathbb{E}\left[
\int_{0}^{T}e^{\beta s}|Z(s)|^{2}ds\right] \,,\quad\quad\Vert Y\Vert _{%
\mathcal{S}_{t}^{2,m}}^{2}=\mathbb{E}\left[ \sup_{0\leq s\leq T}e^{\beta
s}|Y(s)|^{2}\right] \,,
\end{equation*}
for a given constant $\beta>0$.
\end{definition}

Concerning the delayed backward SDE in \eqref{FBSDE 2}, we will assume the
following to hold.

Let $F:[0,T]\times\mathbb{\Lambda}\times\mathbb{R}^{m}\times\mathbb{R}%
^{m\times d^{\prime}}\times L^{2}\left( [-\delta,0];\mathbb{R}^{m}\right)
\times L^{2}([-\delta,0];\mathbb{R}^{m\times d^{\prime}})\rightarrow \mathbb{%
R}^{m}$ and $h:\mathbb{\Lambda}\rightarrow\mathbb{R}^{m}$ such that the
following holds:

\begin{description}
\item[$\mathrm{(A}_{3}\mathrm{)}$] There exist $L,K,M>0$, $p\geq 1$ and a
probability measure $\alpha $ on $\left( [-\delta ,0],\mathcal{B}\left( %
\left[ -\delta ,0\right] \right) \right) $ such that, for any $t\in \lbrack
0,T]$, $\phi \in \mathbb{\Lambda }$, $\left( y,z\right) ,(y^{\prime
},z^{\prime })\in \mathbb{R}^{m}\times \mathbb{R}^{m\times d^{\prime }}$, $%
\hat{y},\hat{y}^{\prime }\in L^{2}\left( [-\delta ,0];\mathbb{R}^{m}\right) $
and $\hat{z},\hat{z}^{\prime }\in L^{2}([-\delta ,0];\mathbb{R}^{m\times
d^{\prime }})$, we have%
\begin{equation*}
\begin{array}{rl}
\left( i\right) & \displaystyle\phi \mapsto F(t,\phi ,y,z,\hat{y},\hat{z})%
\text{ is continuous,}\medskip \\
\left( ii\right) & \displaystyle|F(t,\phi ,y,z,\hat{y},\hat{z})-F(t,\phi
,y^{\prime },z^{\prime },\hat{y},\hat{z})|\leq L(|y-y^{\prime
}|+|z-z^{\prime }|),\medskip \\
\multicolumn{1}{l}{\left( iii\right)} & \displaystyle|F(t,\phi ,y,z,\hat{y},%
\hat{z})-F(t,\phi ,y,z,\hat{y}^{\prime },\hat{z}^{\prime })|^{2}\medskip \\
\multicolumn{1}{l}{} & \displaystyle\quad \leq K\int_{-\delta }^{0}\left(
\left\vert \hat{y}(\theta )-\hat{y}^{\prime }(\theta )\right\vert
^{2}+\left\vert \hat{z}(\theta )-\hat{z}^{\prime }(\theta )\right\vert
^{2}\right) \alpha (d\theta ),\medskip \\
\left( iv\right) & \displaystyle\left\vert F\left( t,\phi ,0,0,0,0\right)
\right\vert <M(1+\left\Vert \phi \right\Vert _{T}^{p}).%
\end{array}%
\end{equation*}

\item[$\mathrm{(A}_{4}\mathrm{)}$] The function $F\left( \cdot ,\cdot ,y,z,%
\hat{y},\hat{z}\right) $ is $\mathbb{F}$--progressively measurable, for any $%
\left( y,z,\hat{y},\hat{z}\right) \in \mathbb{R}^{m}\times \mathbb{R}%
^{m\times d^{\prime }}\times L^{2}\left( [-\delta ,0];\mathbb{R}^{m}\right)
\times L^{2}([-\delta ,0];\mathbb{R}^{m\times d^{\prime }})$.

\item[$\mathrm{(A}_{5}\mathrm{)}$] The function $h$ is continuous and $%
|h(\phi )|\leq M(1+\left\Vert \phi \right\Vert _{T}^{p}),$ for all $\phi \in
\mathbb{\Lambda }.$
\end{description}

\begin{remark}
\label{constraint}In order to show the existence and uniqueness of a
solution to the backward part of system (\ref{FBSDE 2}) and to obtain the
continuity of $Y^{t,\phi }$ with respect to $\phi $ we are forced to impose
that $K$ or $\delta $ are small enough.

More precisely, we will assume that there exists a constant $\gamma \in
(0,1) $ such that%
\begin{equation}
K\,\frac{\gamma e^{\big(\gamma +\frac{6L^{2}}{\gamma }\big)\delta }}{%
(1-\gamma )L^{2}}\,\max \left\{ 1,T\right\} <\frac{1}{290}\,.
\label{condition_KT}
\end{equation}
\end{remark}

We are now ready to state the first result of the this section.

\begin{theorem}
\label{theorem 3}

Let us assume that assumptions $\mathrm{(A}_{3}\mathrm{)}$--$\mathrm{(A}_{5}%
\mathrm{)}$ hold true. If condition (\ref{condition_KT}) is satisfied, then
there exists a unique solution $\left( Y^{t,\phi },Z^{t,\phi }\right)
_{(t,\phi )\in \lbrack 0,T]\times \mathbb{\Lambda }}$ for the backward
stochastic differential system from (\ref{FBSDE 2}) such that $\left(
Y^{t,\phi },Z^{t,\phi }\right) \in \mathcal{S}_{t}^{2,m}\times \mathcal{H}%
_{t}^{2,m\times d^{\prime }},$ for all $t\in \lbrack 0,T]$ and the
application $t\mapsto \left( Y^{t,\phi },Z^{t,\phi }\right) $ is continuous
from $[0,T]$ into $\mathcal{S}_{0}^{2,m}\times \mathcal{H}_{0}^{2,m\times
d^{\prime }}$.
\end{theorem}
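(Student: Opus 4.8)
The plan is to construct the whole family $(Y^{t,\phi},Z^{t,\phi})_{t\in[0,T]}$ (for fixed $\phi$) by a single Banach fixed point argument, the essential difficulty being that the initial condition $Y^{t,\phi}(s)=Y^{s,\phi}(s)$ for $s<t$ couples the solutions issued from different initial times through the delayed argument $Y^{t,\phi}_{r}$. First I would fix $\phi$ and introduce the product space $\mathcal{B}$ of families $(U^{t,\phi},V^{t,\phi})_{t\in[0,T]}$ with $(U^{t,\phi},V^{t,\phi})\in\mathcal{S}_{t}^{2,m}\times\mathcal{H}_{t}^{2,m\times d^{\prime}}$, prolonged to $[0,t)$ by $U^{t,\phi}(s):=U^{s,\phi}(s)$ and $V^{t,\phi}(s):=0$, equipped with a norm built from $\|\cdot\|_{\mathcal{S}_{t}^{2,m}}$, $\|\cdot\|_{\mathcal{H}_{t}^{2,m\times d^{\prime}}}$ with a parameter $\beta$ to be chosen. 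On $\mathcal{B}$ I define the map $\Phi$ sending $(U,V)$ to the family $(Y,Z)$ obtained by freezing only the delayed arguments: for each $t$, $Y^{t,\phi}$ solves on $[t,T]$ the standard (non--delayed) BSDE with driver $r\mapsto F(r,X^{t,\phi},Y^{t,\phi}(r),Z^{t,\phi}(r),\bar{U}_{r}^{t},\bar{V}_{r}^{t})$ and terminal value $h(X^{t,\phi})$, where $\bar{U}_{r}^{t}$ is the path on $[-\delta,0]$ given by $U^{t,\phi}(r+\theta)$ on $\{r+\theta\geq t\}$ and by the diagonal $U^{r+\theta,\phi}(r+\theta)$ on $\{r+\theta<t\}$ (with the prolongation by the value at $0$ when the argument becomes negative). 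Existence, uniqueness and membership in $\mathcal{S}_{t}^{2,m}\times\mathcal{H}_{t}^{2,m\times d^{\prime}}$ of each $Y^{t,\phi}$ is classical, since by $\mathrm{(A}_{3}\mathrm{)}(ii)$ the frozen driver is Lipschitz in $(y,z)$ and, by $\mathrm{(A}_{3}\mathrm{)}(iv)$, $\mathrm{(A}_{5}\mathrm{)}$ together with the moment bounds of Theorem \ref{theorem 0}, it has the required square--integrability and polynomial growth in $\|\phi\|_{T}$.

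Next I would show that $\Phi$ is a contraction on $\mathcal{B}$. Taking two inputs $(U^{i},V^{i})$, $i=1,2$, with images $(Y^{i},Z^{i})$, I would apply It\^{o}'s formula to $e^{\beta s}|Y^{1}-Y^{2}|^{2}$ and use the standard a priori BSDE estimate to bound $\|Y^{1}-Y^{2}\|_{\mathcal{S}_{t}^{2,m}}^{2}+\|Z^{1}-Z^{2}\|_{\mathcal{H}_{t}^{2,m\times d^{\prime}}}^{2}$ by a weighted $L^{2}$--norm of the driver difference. By $\mathrm{(A}_{3}\mathrm{)}(iii)$ this difference is controlled, through the measure $\alpha$, by $\int_{-\delta}^{0}(|\bar{U}_{r}^{1}(\theta)-\bar{U}_{r}^{2}(\theta)|^{2}+|\bar{V}_{r}^{1}(\theta)-\bar{V}_{r}^{2}(\theta)|^{2})\,\alpha(d\theta)$; a Fubini step and the change of variables $s=r+\theta$ turn the time integral of this quantity into the norms of $U^{1}-U^{2}$ and $V^{1}-V^{2}$, at the price of the factor $e^{\beta\delta}$ coming from the $\delta$--shift in the exponential weight. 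Choosing $\beta$ so as to optimise the resulting constant (the source of the exponent $\gamma+\frac{6L^{2}}{\gamma}$ appearing in \eqref{condition_KT}) and invoking the smallness assumption \eqref{condition_KT}, the Lipschitz constant of $\Phi$ is strictly below $1$, so it has a unique fixed point, which by construction solves the backward part of (\ref{FBSDE 2}).

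Finally, the continuity of $t\mapsto(Y^{t,\phi},Z^{t,\phi})$ into $\mathcal{S}_{0}^{2,m}\times\mathcal{H}_{0}^{2,m\times d^{\prime}}$ would follow from a stability estimate: writing the BSDEs issued from initial times $t$ and $t^{\prime}$, subtracting, and combining the Lipschitz bounds $\mathrm{(A}_{3}\mathrm{)}(ii)$--$(iii)$ with the forward continuity estimate of Theorem \ref{theorem 0} (the $|t-t^{\prime}|^{q}$ term and the modulus $\sup_{r}|\phi(t)-\phi(r)|$), one bounds $\|Y^{t,\phi}-Y^{t^{\prime},\phi}\|^{2}+\|Z^{t,\phi}-Z^{t^{\prime},\phi}\|^{2}$ by data that vanishes as $t^{\prime}\to t$ plus a self--referential delayed term; the margin built into \eqref{condition_KT} again lets me absorb the latter and close the estimate.

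I expect the \emph{main obstacle} to be precisely this self--referential delay term: because $Y_{r}^{t,\phi}$ reaches into $[t-\delta,t)$, where the solution is prescribed to equal the diagonal values $Y^{s,\phi}(s)$ of solutions started at earlier times, the fixed point cannot be posed at a single initial time but must be set up simultaneously for the whole family, and the contraction must be measured in a norm strong enough to dominate these diagonal values. Keeping track of how the $\mathcal{S}_{t}^{2,m}$--norms interact under the $\delta$--shift, and ensuring the diagonal contribution stays controlled so that the global map remains a contraction, is the delicate point; this is what forces the quantitative restriction \eqref{condition_KT} rather than the mere smallness of $T$ or $K$ that suffices for the classical time--delayed BSDE of \cite{de-im/10}.
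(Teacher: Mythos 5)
Your proposal follows essentially the same route as the paper's proof: a single Banach fixed point for the whole family $(Y^{t,\phi},Z^{t,\phi})_{t\in[0,T]}$ posed on a space of time--indexed families, with only the delayed arguments frozen in the driver, the diagonal prolongation $Y^{t,\phi}(s)=Y^{s,\phi}(s)$ controlled through the supremum over the initial time in the norm, and the $e^{\beta\delta}$ factor from the shift in the exponential weight absorbed via condition (\ref{condition_KT}). The paper implements exactly this scheme (its map $\Gamma$ on $\mathcal{C}([0,T];\mathcal{S}_0^{2,m})\times\mathcal{C}([0,T];L^2(\Omega;\mathcal{H}_0^{2,m\times d^{\prime}}))$), including the weighted It\^{o}/Burkholder--Davis--Gundy estimates and a separate step verifying that the map preserves continuity in $t$.
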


\begin{remark}
The previous theorem provides only the regularity of $t\mapsto \left(
Y^{t,\phi },Z^{t,\phi }\right) .$ In what concerns the continuity of $\left(
Y^{t,\phi },Z^{t,\phi }\right) $ with respect to $\phi $, see the proof of
Theorem \ref{theorem 2}.
\end{remark}

\begin{remark}
Theorem \ref{theorem 3} states that if $K$ or $\delta $ are small enough
such that condition (\ref{condition_KT}) is satisfied, then there exists a
unique solution.

More precisely, if $\delta $ is fixed, then we can find $K$ sufficiently
small such that restriction (\ref{condition_KT}) is satisfied.

On the other hand, if $K$ is fixed it seems, at first sight, that
restriction (\ref{condition_KT}) cannot be made true by letting $\delta $ to
$0$; but in this case it is sufficient to take $\tilde{L}\geq L$ large
enough for which there exists a small enough $\delta >0$ such that condition
(\ref{condition_KT}) with $L$ replaced by $\tilde{L}$ is still satisfied. Of
course, assumption $\mathrm{(A}_{3}-ii\mathrm{)}$ is still verified with $%
\tilde{L}$ instead $L\,.$
\end{remark}

\begin{remark}
The proof is mainly based on Banach fixed point theorem; we provide it in
the Appendix section.$\smallskip $

The main difference between our result and Theorem 2.1 from \cite{de-im/10}
is due to the supplementary structure condition $Y^{t,\phi}\left( s\right)
=Y^{s,\phi}\left( s\right) $, for $s\in\lbrack0,t)$ which should be
satisfied by the unknown process $Y^{t,\phi}$.

We also allow $T$\ to be arbitrary and we consider that the time horizon is
different from the delay $\delta \in \left[ 0,T\right] $; moreover, we make
the difference between the Lipschitz constant $L$\ with respect to $\left(
y,z\right) $\ and the Lipschitz constant $K$\ with respect to $\hat{y}$ and $%
\hat{z}$, and hence in restriction (\ref{condition_KT}) we can play also
with the constant $K$.
\end{remark}

\begin{remark}
Using It\^{o}'s formula and proceeding as in the proof of Theorem \ref%
{theorem 3}, we can easily show that the solution $\left( Y^{t,\phi
},Z^{t,\phi}\right) $ to equation (\ref{FBSDE 2}) satisfies the following
inequality. For any $q\geq1$, there exists $C>0$ such that%
\begin{equation}
\begin{array}{l}
\displaystyle\mathbb{E}\big(\sup_{r\in\left[ 0,T\right] }|Y^{t,\phi}\left(
r\right) |^{2q}\big)+\mathbb{E}\Big(\int_{0}^{T}|Z^{t,\phi}\left( r\right)
|^{2}dr\Big)^{q}\medskip \\
\displaystyle\quad\leq C\Big[\mathbb{E}|h(X^{t,\phi})|^{2q}+\mathbb{E}\Big(%
\int_{0}^{T}|F(r,X^{t,\phi},0,0,0,0)|dr\Big)^{2q}\Big]\leq C(1+||\phi
||_{T}^{2q}).%
\end{array}
\label{a priori estimates}
\end{equation}
\end{remark}

\section{Path--dependent PDE -- proof of the existence theorem}

\label{Section 3}

The current section is devoted to the study of viscosity solution to the
path-dependent equation \eqref{PDKE}. In particular, in order to obtain
existence of a viscosity solution, we will impose some additional
assumptions on the generator $f$ and on the terminal condition $h$ in
equation \eqref{FBSDE 2}, in particular we will assume that the generator $f$
depends only on past values assumed by $Y$ and not by past values of $Z$. In
what follows we will assume the following to hold.

Let $f:\left[ 0,T\right] \times \mathbb{\Lambda }\times \mathbb{R}\times
\mathbb{R}^{d}\times \mathcal{C}\left( \left[ -\delta ,0\right] ;\mathbb{R}%
\right) \rightarrow \mathbb{R}$ and $h:\mathbb{\Lambda }\rightarrow \mathbb{R%
}$ such that the following hold.

\begin{description}
\item[$\mathrm{(A}_{6}\mathrm{)}$] the functions $f$ and $h$ are continuous;
also $f\left( \cdot,\cdot,y,z,\hat{y}\right) $ is non-anticipative;

\item[$\mathrm{(A}_{7}\mathrm{)}$] there exist $L,K,M>0$ and $p\geq 1$ such
that for any $\left( t,\phi \right) \in \left[ 0,T\right] \times \mathbb{%
\Lambda }$, $y,y^{\prime }\in \mathbb{R}$, $z,z^{\prime }\in \mathbb{R}^{d}$
and $\hat{y},\hat{y}^{\prime }\in \mathcal{C}\left( \left[ -\delta ,0\right]
;\mathbb{R}\right) :$%
\begin{equation*}
\begin{array}{rl}
\left( i\right) & \displaystyle|f(t,\phi ,y,z,\hat{y})-f(t,\phi ,y^{\prime
},z^{\prime },\hat{y})|\leq L(|y-y^{\prime }|+|z-z^{\prime }|),\medskip \\
\left( ii\right) & \displaystyle|f(t,\phi ,y,z,\hat{y})-f(t,\phi ,y,z,\hat{y}%
^{\prime })|^{2}\leq K\int_{-\delta }^{0}|\hat{y}\left( \theta \right) -\hat{%
y}^{\prime }\left( \theta \right) |^{2}\alpha \left( d\theta \right)
,\medskip \\
\left( iii\right) & \displaystyle|f(t,\phi ,0,0,0)|\leq M(1+\left\Vert \phi
\right\Vert _{T}^{p}),\medskip \\
\left( iv\right) & \displaystyle\left\vert h\left( \phi \right) \right\vert
\leq M(1+\left\Vert \phi \right\Vert _{T}^{p}),%
\end{array}%
\end{equation*}%
with $\alpha $ a probability measure on $\left( \left[ -\delta ,0\right] ,%
\mathcal{B}\left( \left[ -\delta ,0\right] \right) \right) $.
\end{description}

\begin{remark}
\label{REM:DelGen}For example, the following generators satisfy assumptions $%
\mathrm{(A}_{6}\mathrm{)}$,$\mathrm{(A}_{7}\mathrm{)}$:%
\begin{equation*}
f_{1}\left( t,\phi ,y,z,\hat{y}\right) :=K\int_{-\delta }^{0}\hat{y}\left(
s\right) ds,\quad \quad f_{2}\left( t,\phi ,y,z,\hat{y}\right) :=K\,\hat{y}%
\left( t-\delta \right) \,.
\end{equation*}%
In general, if $g:\left[ 0,T\right] \rightarrow \mathbb{R}$ is a measurable,
bounded function with $g\left( t\right) =0$ for $t<0$, then the following
linear time--delayed generator%
\begin{equation*}
f\left( t,\phi ,y,z,\hat{y}\right) =\int_{-\delta }^{0}g\left( t+\theta
\right) \hat{y}\left( \theta \right) \alpha \left( d\theta \right) ,
\end{equation*}%
satisfies assumptions $\mathrm{(A}_{6}\mathrm{)}$,$\mathrm{(A}_{7}\mathrm{)}$%
.
\end{remark}

We state now the main result of the present paper.

\begin{theorem}[Existence]
\label{theorem 1} Let us assume that assumptions $\mathrm{(A}_{1}\mathrm{)}$,%
$\mathrm{(A}_{2}\mathrm{)}$,$\mathrm{(A}_{6}\mathrm{)}$,$\mathrm{(A}_{7}%
\mathrm{)}$ hold. If the delay $\delta $ or the Lipschitz constant $K$ are
sufficiently small, i.e. condition (\ref{condition_KT}) is verified, then
the path--dependent PDE (\ref{PDKE}) admits at least one viscosity solution.
\end{theorem}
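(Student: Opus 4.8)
The plan is to produce the viscosity solution through a nonlinear Feynman--Kac representation: I set $u(t,\phi):=Y^{t,\phi}(t)$, where $(X^{t,\phi},Y^{t,\phi},Z^{t,\phi})$ solves the decoupled system \eqref{FBSDE 2} with generator $F(r,X,Y,Z,Y_r,Z_r)=f(r,X,Y,Z,Y_r)$ (no delayed $Z$-dependence, as permitted in Section~\ref{Section 3}). Well-posedness of this triple is furnished by Theorem~\ref{theorem 0} for the forward path-dependent SDE and by Theorem~\ref{theorem 3} for the delayed backward SDE, the latter being applicable precisely because assumptions $\mathrm{(A}_{6}\mathrm{)}$, $\mathrm{(A}_{7}\mathrm{)}$ and the smallness condition \eqref{condition_KT} hold. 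The first block of work is to verify that $u$ has the regularity demanded by Definition~\ref{definition 1} together with $u(T,\phi)=h(\phi)$: continuity of $u$ in the pseudometric $d$ follows from the continuity of $t\mapsto(Y^{t,\phi},Z^{t,\phi})$ established in Theorem~\ref{theorem 3}, combined with the forward stability estimates \eqref{properties_FSDE} and the a priori bound \eqref{a priori estimates}, which also controls the growth of $u$.

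The decisive structural step is the flow (Markov-type) identity $Y^{t,\phi}(s)=u(s,X^{t,\phi})$ for $s\in[t,T]$. I would prove it by uniqueness: restarting the system at time $s$ from the path $X^{t,\phi}$ yields, by the flow property of the forward equation and the uniqueness part of Theorem~\ref{theorem 3}, the same backward process, so that $u(s,X^{t,\phi})=Y^{s,X^{t,\phi}}(s)=Y^{t,\phi}(s)$. Here the supplementary condition $Y^{t,\phi}(r)=Y^{r,\phi}(r)$ on $[0,t)$ is exactly what renders the delayed data consistent under restarting, so that the past segment $Y^{t,\phi}_s$ feeding $f$ coincides with the past of $u(\cdot,X^{t,\phi})$. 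In particular, at $s=t$ the delayed argument collapses to $(u(\cdot,\phi))_t=(u(t+\theta,\phi))_{\theta\in[-\delta,0]}$, which is precisely the term appearing in \eqref{PDKE}; this is where the non-standard prolongation is indispensable, as stressed after \eqref{FBSDE 2}.

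With the representation in hand, I would verify the two viscosity inequalities separately, treating the subsolution case (the supersolution case being symmetric). Fix $(t,\phi)$ and a test function $\varphi\in\underline{\mathcal{A}}^{L}u(t,\phi)$ with its stopping time $\tau_{0}\in\mathcal{T}_{+}^{t}$. Using $Y^{t,\phi}(s)=u(s,X^{t,\phi})$, the minimality condition rewrites the increment of $(\varphi-u)(\cdot,X^{t,\phi})$ over $[t,\tau\wedge\tau_{0}]$ in terms of $\underline{\mathcal{E}}_{t}^{L}$; I would then apply the functional It\^o formula (Theorem~\ref{functional Ito formula}) to $\varphi(\cdot,X^{t,\phi})$ and substitute the backward dynamics for $Y^{t,\phi}$, so that the drift of $(\varphi-u)(\cdot,X^{t,\phi})$ equals $\partial_{t}\varphi+\mathcal{L}\varphi+f$ along the path, with diffusion coefficient $\partial_{x}\varphi\,\sigma-Z^{t,\phi}$. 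Arguing by contradiction---assuming $-\partial_{t}\varphi-\mathcal{L}\varphi-f>0$ at $(t,\phi)$ and using continuity to keep the strict sign on a short random interval---and invoking the definition of $\underline{\mathcal{E}}_{t}^{L}$ as an infimum over the tilted measures $\mathbb{P}^{t,\lambda}$ (the Girsanov shift with $|\lambda|\le L$ absorbing the $L$-Lipschitz $z$-dependence of $f$) should contradict the minimality and thus yield the desired inequality in the limit $\tau_{0}\downarrow t$.

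The main obstacle I anticipate lies in this limit and concerns the time-delayed argument. As the localizing interval shrinks, the segment $Y^{t,\phi}_s$ converges to $(u(\cdot,\phi))_t$, so the procedure naturally reproduces the delayed value $(u(\cdot,\phi))_t$; one must then reconcile this with the test-function segment $(\varphi(\cdot,\phi))_t$ entering Definition~\ref{definition 1} and control the resulting discrepancy. This is where assumption $\mathrm{(A}_{7}\mathrm{)}(ii)$---the $\sqrt{K}$-Lipschitz bound in the delayed variable integrated against $\alpha$---and the smallness encoded in \eqref{condition_KT} are expected to intervene, guaranteeing that the delayed perturbation cannot overturn the strict sign obtained from the contradiction hypothesis. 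A secondary, purely technical point is the uniform-in-$\tau_{0}$ estimation of the stochastic integral and of the oscillation of $X^{t,\phi}$ near $t$, for which the modulus estimate in \eqref{properties_FSDE} and the Burkholder--Davis--Gundy inequality, used exactly as in the proof of Theorem~\ref{theorem 3}, should suffice.
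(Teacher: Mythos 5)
Your overall architecture coincides with the paper's: define $u(t,\phi):=Y^{t,\phi}(t)$, establish continuity (the paper's Theorem \ref{theorem 2}), prove the representation $Y^{t,\phi}(s)=u(s,X^{t,\phi})$, and then verify the viscosity inequalities by contradiction using the functional It\^{o} formula and a Girsanov tilt $\lambda\in\mathcal{U}^{L}$ absorbing the Lipschitz $z$-dependence of $f$ --- this last block is essentially identical to the paper's proof of Theorem \ref{theorem 1}.

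The genuine gap is in your proof of the flow identity $Y^{t,\phi}(s)=u(s,X^{t,\phi})$, which you dispatch ``by uniqueness'' after restarting at time $s$ from the path $X^{t,\phi}$. For the time--delayed BSDE this argument is circular. To identify $Y^{s,X^{t,\phi}}$ with $Y^{t,\phi}$ on $[s,T]$ via the uniqueness part of Theorem \ref{theorem 3}, the two processes must carry the same data, and that data includes the past segment on $[s-\delta,s)$ entering the generator through $Y_r$. For $Y^{s,X^{t,\phi}}$ this segment is prescribed by the supplementary condition as $u(r,X^{t,\phi})$, whereas for $Y^{t,\phi}$ on $[t,s)$ it consists of the actual solution values $Y^{t,\phi}(r)$ --- and the assertion that these coincide is precisely the Feynman--Kac identity you are trying to prove. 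A forward induction in $s$ does not repair this, because the backward equation at time $s$ depends on the future, not the past. (A secondary issue: even absent delay, evaluating $u(s,\cdot)$ at the random path $X^{t,\phi}$ and equating it with $Y^{s,X^{t,\phi}}(s)$ requires an approximation argument.) The paper circumvents both problems constructively: Theorem \ref{Feynman-Kac formula_preliminary} first proves the representation for non-delayed generators by discretizing $b,\sigma,f,h$ into functionals of finitely many path increments, reducing to a finite-dimensional Markovian FBSDE where the classical result of \cite{ka-pe-qu/97} applies, and then passing to the limit; Theorem \ref{Feynman-Kac formula} then treats the delay by Picard iteration, where at each step the delayed argument $Y^{n,t,\phi}_r=\tilde{u}_n(r,X^{t,\phi})$ is a \emph{known} functional of the forward process, so the non-delayed representation applies to the iterate, and the contraction from Theorem \ref{theorem 3} passes the representation to the limit. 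Some version of this two-stage construction (or another non-circular argument) is needed; without it your proof of the central representation formula does not go through.
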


\begin{remark}
The new and the essential part of the proof of Theorem \ref{theorem 1} is
the nonlinear representation Feynman--Kac type formula, which links the
functional SDE (\ref{FSDE}) to a suitable BSDE with time--delayed
generators. We prove Feynman--Kac type formula first in the case of BSDE
independent of the past of $Y,$ see Theorem \ref{Feynman-Kac
formula_preliminary}, and after that in the general case of BSDE (\ref%
{BSDE_delayed 1}), see Theorem \ref{Feynman-Kac formula}.$\smallskip $

We also mention that this theorem is a generalization of the Theorem 4.3
from \cite{ek-ke-to-zh/14}.$\medskip $
\end{remark}

\begin{remark}[Uniqueness]
The problem of uniqueness is solved\footnote[1]{%
We are thankful to the Reviewer for his/her solution in what concerns the
uniqueness problem.} by reducing it to the problem of uniqueness of the
viscosity solution from the case of $f$ being independent of the term $%
\left( u(\cdot ,\phi )\right) _{t}\,.$ This particular case is presented in
Theorem 4.6 from \cite{ek-ke-to-zh/14}.

Let us take two viscosity solutions $u^{1}$ and $u^{2}$ of the
path--dependent PDE (\ref{PDKE}). We define%
\begin{equation*}
f^{i}(t,\phi ,y,z):=f(t,\phi ,y,z,\left( u^{i}\left( \cdot ,\phi \right)
\right) _{t})\,,\quad i=\overline{1,2}\,.
\end{equation*}%
Using these drivers we associate the following BSDEs:%
\begin{equation}
Y^{t,\phi }\left( s\right) =h(X^{t,\phi })+\int_{s}^{T}f^{i}(r,X^{t,\phi
},Y^{t,\phi }\left( r\right) ,Z^{t,\phi }\left( r\right)
)dr-\int_{s}^{T}Z^{t,\phi }\left( r\right) dW\left( r\right) \,,\quad i=%
\overline{1,2}\,,  \label{BSDE_delayed 3}
\end{equation}%
for which there exist unique solutions $\left( Y^{i,t,\phi },Z^{i,t,\phi
}\right) \in \mathcal{S}_{t}^{2,1}\times \mathcal{H}_{t}^{2,1\times
d^{\prime }}\,,\quad i=\overline{1,2}\,.$

Using Theorem \ref{Feynman-Kac formula_preliminary} we see that%
\begin{equation*}
Y^{i,t,\phi }\left( s\right) =v^{i}(s,X^{t,\phi }),\quad \text{for all }s\in %
\left[ 0,T\right] ,\quad \text{a.s.,}
\end{equation*}%
for any $\left( t,\phi \right) \in \left[ 0,T\right] \times \mathbb{\Lambda }%
,$ where $v^{i}:\left[ 0,T\right] \times \mathbb{\Lambda }\rightarrow
\mathbb{R}$ are defined by (\ref{def_u}), i.e.%
\begin{equation*}
v^{i}(t,\phi ):=Y^{i,t,\phi }\left( t\right) ,\;(t,\phi )\in \lbrack
0,T]\times \mathbb{\Lambda }\,.
\end{equation*}%
Hence, using Theorem \ref{theorem 1}, we obtain that the functions $v^{i}$
are solutions of the PDE of type (\ref{PDKE}), but without the delayed terms
$\left( v^{i}\left( \cdot ,\phi \right) \right) _{t}:$%
\begin{equation}
\left\{
\begin{array}{r}
-\partial _{t}v^{i}(t,\phi )-\mathcal{L}v^{i}(t,\phi )-f^{i}(t,\phi
,v^{i}(t,\phi ),\partial _{x}v^{i}\left( t,\phi \right) \sigma (t,\phi
))=0,\medskip \\
\multicolumn{1}{l}{v^{i}(T,\phi )=h(\phi ),\quad i=\overline{1,2}\,.}%
\end{array}%
\right.  \label{PDKE 3}
\end{equation}%
Since $u^{i}$ is also solution to equation (\ref{PDKE 3}), by using Theorem
4.6 from \cite{ek-ke-to-zh/14} we obtain%
\begin{equation*}
u^{i}(t,\phi )=v^{i}(t,\phi ),\quad \left( t,\phi \right) \in \left[ 0,T%
\right] \times \mathbb{\Lambda }\,,\quad i=\overline{1,2}\,.
\end{equation*}%
We mention that Theorem 4.6 from \cite{ek-ke-to-zh/14} is proved in the
particular case $b=0,\sigma =I$, but we can adapt the proof to our case
without difficulty. However, some stronger assumptions on $f$ and $h$
(boundedness and uniform continuity w.r.t. $\phi $) are required and we will
have to adopt them in order to use this result.

In this case, since
\begin{equation*}
Y^{i,t,\phi }\left( s\right) =v^{i}(s,X^{t,\phi })=u^{i}(s,X^{t,\phi }),\ i=%
\overline{1,2}\,,
\end{equation*}
BSDEs (\ref{BSDE_delayed 3}) become a single equation,%
\begin{equation}
Y^{i,t,\phi }\left( s\right) =h(X^{t,\phi })+\int_{s}^{T}f(r,X^{t,\phi
},Y^{i,t,\phi }\left( r\right) ,Z^{i,t,\phi }\left( r\right)
,Y_{r}^{i,t,\phi })dr-\int_{s}^{T}Z^{i,t,\phi }\left( r\right) dW\left(
r\right) \,,  \label{BSDE_delayed 4}
\end{equation}%
with $i=\overline{1,2}\,,$ for which we have uniqueness (see Theorem \ref%
{theorem 3}).

Hence $Y^{1,t,\phi }=Y^{2,t,\phi }$ and therefore%
\begin{equation*}
u^{1}(t,\phi )=Y^{1,t,\phi }\left( t\right) =Y^{2,t,\phi }\left( t\right)
=u^{2}(t,\phi ).
\end{equation*}
\end{remark}

Under assumptions $\mathrm{(A}_{1}\mathrm{)}$,$\mathrm{(A}_{2}\mathrm{)}$,$%
\mathrm{(A}_{6}\mathrm{)}$,$\mathrm{(A}_{7}\mathrm{)}$, it follows from
Theorem \ref{theorem 3} (in the case $m=1$) that for each $\left( t,\phi
\right) \in \left[ 0,T\right] \times \mathbb{\Lambda }$ there exists a
unique triple $\left( X^{t,\phi },Y^{t,\phi },Z^{t,\phi }\right) $ of $%
\mathbb{G}^{t}$--progressively measurable processes such that $X^{t,\phi }$
satisfies equation (\ref{FSDE}) and $\left( Y^{t,\phi },Z^{t,\phi }\right)
\in \mathcal{S}_{t}^{2,1}\times \mathcal{H}_{t}^{2,1\times d^{\prime }}$,
with $Y^{t,\phi }\left( s\right) =Y^{s,\phi }\left( s\right) $, for any $%
s\in \lbrack 0,t)$, is a solution of the following BSDE on $\left[ t,T\right]
:$%
\begin{equation}
Y^{t,\phi }\left( s\right) =h(X^{t,\phi })+\int_{s}^{T}f(r,X^{t,\phi
},Y^{t,\phi }\left( r\right) ,Z^{t,\phi }\left( r\right) ,Y_{r}^{t,\phi
})dr-\int_{s}^{T}Z^{t,\phi }\left( r\right) dW\left( r\right) .
\label{BSDE_delayed 1}
\end{equation}%
Let us further observe that the generator $f$ depends on $\omega $ only via
the the process $X^{t,x}$.

Before proving Theorem \ref{theorem 1}, we need to show some results. For
that, let us first define the function $u:[0,T]\times \mathbb{\Lambda }%
\rightarrow \mathbb{R}$ by%
\begin{equation}
u(t,\phi ):=Y^{t,\phi }\left( t\right) ,\;(t,\phi )\in \lbrack 0,T]\times
\mathbb{\Lambda }\,;  \label{def_u}
\end{equation}%
notice that $u(t,\phi )$ is a deterministic function since $Y^{t,\phi
}\left( t\right) $ is $\mathcal{G}_{t}^{t}\equiv \mathcal{N}$--measurable.

\begin{theorem}
\label{theorem 2} Under the assumptions of Theorem \ref{theorem 1}, the
function $u$ is continuous.
\end{theorem}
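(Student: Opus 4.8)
The plan is to establish continuity with respect to the pseudometric $d$ by separating the dependence on the initial time from the dependence on the initial path. Since $u$ is non-anticipative, $u(t,\phi)=u(t,\phi(\cdot\wedge t))$, so for a sequence $(t_{n},\phi_{n})\to(t,\phi)$ in $d$ I would replace $\phi_{n},\phi$ by their stopped versions $\psi_{n}:=\phi_{n}(\cdot\wedge t_{n})$ and $\psi:=\phi(\cdot\wedge t)$; then $d$-convergence becomes $|t_{n}-t|\to0$ together with $\Vert\psi_{n}-\psi\Vert_{T}\to0$, a genuine uniform-norm convergence to which the forward estimates (\ref{properties_FSDE}) apply. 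Writing
\[
|u(t_{n},\psi_{n})-u(t,\psi)|\leq\sup_{s\in[0,T]}|u(s,\psi_{n})-u(s,\psi)|+|u(t_{n},\psi)-u(t,\psi)|,
\]
it suffices to prove (i) continuity of $t\mapsto u(t,\psi)$ for fixed $\psi$, and (ii) that $\sup_{s}|u(s,\phi)-u(s,\phi^{\prime})|\to0$ as $\Vert\phi-\phi^{\prime}\Vert_{T}\to0$, i.e. continuity in the path variable that is uniform in the time variable.

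For (i) I would invoke Theorem \ref{theorem 3}, which gives that $t\mapsto(Y^{t,\psi},Z^{t,\psi})$ is continuous into $\mathcal{S}_{0}^{2,1}\times\mathcal{H}_{0}^{2,1\times d^{\prime}}$. As $u(t_{n},\psi)=Y^{t_{n},\psi}(t_{n})$ is deterministic, I would bound $|u(t_{n},\psi)-u(t,\psi)|\leq\big(\mathbb{E}\sup_{s}|Y^{t_{n},\psi}(s)-Y^{t,\psi}(s)|^{2}\big)^{1/2}+\mathbb{E}|Y^{t,\psi}(t_{n})-Y^{t,\psi}(t)|$: the first summand vanishes by the $\mathcal{S}_{0}$-continuity, and the second by path-continuity of $Y^{t,\psi}$ together with the uniform integrability furnished by the a priori estimate (\ref{a priori estimates}).

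The core is (ii). Fix $t$ and set $\Delta Y^{t}:=Y^{t,\phi}-Y^{t,\phi^{\prime}}$, $\Delta Z^{t}:=Z^{t,\phi}-Z^{t,\phi^{\prime}}$. Applying It\^{o}'s formula to $e^{\beta s}|\Delta Y^{t}(s)|^{2}$ on $[t,T]$ exactly as in the proof of Theorem \ref{theorem 3}, using $\mathrm{(A}_{7}\mathrm{)}(i)$ for the Lipschitz part in $(y,z)$, $\mathrm{(A}_{7}\mathrm{)}(ii)$ for the delayed part, the continuity of $f$ in the path from $\mathrm{(A}_{6}\mathrm{)}$ and of $h$, together with the Burkholder--Davis--Gundy inequality, I expect an estimate of the form
\[
I(t):=\mathbb{E}\big(\sup_{s\in[t,T]}e^{\beta s}|\Delta Y^{t}(s)|^{2}\big)\leq C\,A(\phi,\phi^{\prime})+c_{0}\sup_{r\in[0,T]}e^{\beta r}|u(r,\phi)-u(r,\phi^{\prime})|^{2},
\]
where $A(\phi,\phi^{\prime})$ collects the forcing from $\mathbb{E}|h(X^{t,\phi})-h(X^{t,\phi^{\prime}})|^{2}$ and from the path-difference of $f$. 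The crucial point is the delayed term: after the shift $r\mapsto r+\theta$ it produces an integral over $[t-\delta,T]$ of $e^{\beta r}|\Delta Y^{t}(r)|^{2}$; the part over $[t,T]$ is absorbed by the $\beta$-term on the left for $\beta$ large, while the part over $[t-\delta,t)$ must be rewritten through the prolongation $\Delta Y^{t}(r)=u(r,\phi)-u(r,\phi^{\prime})$ for $r<t$, giving the boundary coefficient $c_{0}\propto K\delta\,e^{\beta\delta}$. Since $e^{\beta r}|u(r,\phi)-u(r,\phi^{\prime})|^{2}=e^{\beta r}|\Delta Y^{r}(r)|^{2}\leq I(r)$, setting $J:=\sup_{t\in[0,T]}I(t)$ yields $J\leq C\,A(\phi,\phi^{\prime})+c_{0}J$, and condition (\ref{condition_KT}) guarantees $c_{0}<1$, so $J\leq C\,A(\phi,\phi^{\prime})/(1-c_{0})$. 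Finally, the uniform-in-$t$ bounds (\ref{properties_FSDE}) give $X^{t,\phi}\to X^{t,\phi^{\prime}}$ in every $L^{2q}$ uniformly in $t$, whence, by the continuity and the polynomial growth of $f$ and $h$ in $\mathrm{(A}_{6}\mathrm{)}$--$\mathrm{(A}_{7}\mathrm{)}$, $A(\phi,\phi^{\prime})\to0$ as $\Vert\phi-\phi^{\prime}\Vert_{T}\to0$; consequently $J\to0$, which is (ii).

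The main obstacle is the self-referential character of the delay in (ii): the generator evaluated along $Y^{t,\phi}$ looks back into $[0,t)$, where $Y^{t,\phi}$ is fixed by the structural prolongation $Y^{t,\phi}(r)=Y^{r,\phi}(r)=u(r,\phi)$, so the estimate for $u(\cdot,\phi)-u(\cdot,\phi^{\prime})$ feeds back into itself and cannot be closed by a plain Gr\"onwall argument; it must be closed globally through the fixed-point-type absorption above, for which the smallness condition (\ref{condition_KT}) is indispensable. A secondary, purely technical difficulty is to verify that $A(\phi,\phi^{\prime})\to0$ \emph{uniformly} in $t$, which is exactly where the uniformity in $t$ of the moment and difference estimates (\ref{properties_FSDE}) is used.
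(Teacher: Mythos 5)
Your overall architecture matches the paper's: you split the problem into continuity in $t$ for fixed path (handled via Theorem \ref{theorem 3}) and continuity in the path uniformly in $t$, and for the latter you run It\^{o}'s formula on $e^{\beta s}|\Delta Y(s)|^{2}$, absorb the delayed term over $[t-\delta,t)$ through the structural prolongation $Y^{t,\phi}(r)=Y^{r,\phi}(r)=u(r,\phi)$, and close the resulting self-referential estimate globally using the smallness condition (\ref{condition_KT}). That part is faithful to the paper and correct.

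The gap is in the final step, where you assert that $A(\phi,\phi^{\prime})\to0$ uniformly in $t$ follows from the $L^{2q}$-convergence $X^{t,\phi^{\prime}}\to X^{t,\phi}$ furnished by (\ref{properties_FSDE}) together with the continuity and polynomial growth of $f$ and $h$. This does not follow, because the forcing term contains
\[
\mathbb{E}\int_{0}^{T}\big|f(r,X^{t,\phi},Y^{t,\phi}(r),Z^{t,\phi}(r),Y_{r}^{t,\phi})-f(r,X^{t,\phi^{\prime}},Y^{t,\phi}(r),Z^{t,\phi}(r),Y_{r}^{t,\phi})\big|^{2}dr,
\]
and $f$ is only \emph{continuous}, not uniformly continuous, in the path variable: its modulus of continuity is controlled only on compact sets of the arguments $(y,z,\hat y)$, while $Z^{t,\phi}(r)$ is unbounded and merely square-integrable. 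For a single fixed $t$ one could attempt dominated convergence, but the bound must be uniform in $t\in[0,T]$, and — as the paper explicitly notes — there is no guarantee that the family $\{|Z^{t,\phi}|^{2}\}_{t\in[0,T]}$ is uniformly integrable. The forward estimates (\ref{properties_FSDE}) control $X$, not $Z$, so they cannot supply this. The missing ingredients are precisely what occupies the second half of the paper's proof: (a) using the continuity of $t\mapsto(Y^{t,\phi},Z^{t,\phi})$ from Theorem \ref{theorem 3} together with the Lipschitz property of $f$ in $(y,z,\hat y)$ to replace the supremum over $t\in[0,T]$ by a supremum over a finite grid $\pi_{n}$, so that uniform integrability is only needed for finitely many processes $Z^{t^{\prime},\phi}$; (b) a tightness argument producing compact sets $\mathcal{K}_{\epsilon}$, $\mathcal{K}_{\epsilon}^{\prime}$ on which the joint modulus of continuity $m_{h,f}$ applies; and (c) a truncation of $Z$ at a level $\kappa$ with control of the tail $\mathbb{E}\int_{0}^{T}|Z^{t^{\prime},\phi}(r)|^{2}\mathbb{1}_{\{|Z^{t^{\prime},\phi}(r)|>\kappa\}}dr$. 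You flag this step as a "secondary, purely technical difficulty", but it is in fact the substantive analytic content of the theorem beyond the fixed-point absorption, and your proposal does not contain an argument for it.
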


\begin{remark}
As in the case of Theorem \ref{theorem 3}, the proof can be found in the
Appendix section.
\end{remark}

In order to prove the generalized Feynman--Kac formula suitable to our
framework we first consider the particular case when the generator $f$ is
independent of the past values of $Y$ and $Z$, namely $\left( Y^{t,\phi
},Z^{t,\phi }\right) $ is the solution of the standard BSDE with Lipschitz
coefficients%
\begin{equation}
Y^{t,\phi }\left( s\right) =h(X^{t,\phi })+\int_{s}^{T}f(r,X^{t,\phi
},Y^{t,\phi }\left( r\right) ,Z^{t,\phi }\left( r\right)
)dr-\int_{s}^{T}Z^{t,\phi }\left( r\right) dW\left( r\right) ,\;s\in \left[
t,T\right] .  \label{BSDE_delayed 2}
\end{equation}

\begin{theorem}
\label{Feynman-Kac formula_preliminary} Let us assume that $\mathrm{(A}_{1}%
\mathrm{)}$,$\mathrm{(A}_{2}\mathrm{)}$,$\mathrm{(A}_{6}\mathrm{)}$,$\mathrm{%
(A}_{7}\mathrm{)}$ hold. Then%
\begin{equation*}
Y^{t,\phi }\left( s\right) =u(s,X^{t,\phi }),\quad \text{for all }s\in \left[
0,T\right] ,\quad \text{a.s.,}
\end{equation*}%
for any $\left( t,\phi \right) \in \left[ 0,T\right] \times \mathbb{\Lambda }%
,$ where $Y^{t,\phi }$ is the solution of BSDE (\ref{BSDE_delayed 2}) and $u:%
\left[ 0,T\right] \times \mathbb{\Lambda }\rightarrow \mathbb{R}$ is defined
by (\ref{def_u}).
\end{theorem}

\begin{remark}
We mention that this kind of result was previously proven only in a
particular case by Peng, Wang \cite{pe-wa/11} who considered the case of
smooth coefficients $h$ and $f$ (they also took $b=0,\sigma =I).$
\end{remark}

\begin{remark}
We also mention that Theorem 4.3 from \cite{ek-ke-to-zh/14} uses the
essential Feynman--Kac formula%
\begin{equation*}
Y_{s}^{0}=u\left( s,B^{t}\right)
\end{equation*}%
but without proving it; a similar situation can be found in \cite%
{ek-to-zh/14}. Our result fills this gap.
\end{remark}

\begin{proof}
Again, for the sake of readability, we split the proof into several steps.

\noindent\textbf{\textit{Step I.}}

Let $0=t_{0}<t_{1}<\dots <t_{n}=T$ and suppose that%
\begin{equation*}
\begin{array}{l}
b(t,\phi )=b_{1}(t,\phi (t))\mathbb{1}_{[0,t_{1})}(t)+b_{2}(t,\phi
(t_{1}),\phi (t)-\phi (t_{1}))\mathbb{1}_{[t_{1},t_{2})}(t)+\dots \medskip
\\
\quad +b_{n}(t,\phi (t_{1}),\phi (t_{2})-\phi (t_{1}),\dots ,\phi (t)-\phi
(t_{n-1}))\mathbb{1}_{[t_{n-1},T]}(t),\bigskip \\
\sigma (t,\phi )=\sigma _{1}(t,\phi (t))\mathbb{1}_{[0,t_{1})}(t)+\sigma
_{2}(t,\phi (t_{1}),\phi (t)-\phi (t_{1}))\mathbb{1}_{[t_{1},t_{2})}(t)+%
\dots \medskip \\
\quad +\sigma _{n}(t,\phi (t_{1}),\phi (t_{2})-\phi (t_{1}),\dots ,\phi
(t)-\phi (t_{n-1}))\mathbb{1}_{[t_{n-1},T]}(t)%
\end{array}%
\end{equation*}%
and%
\begin{equation*}
\begin{array}{l}
f(t,\phi ,y,z)=f_{1}(t,\phi (t),y,z)\mathbb{1}_{[0,t_{1})}(t)+f_{2}(t,\phi
(t_{1}),\phi (t)-\phi (t_{1}),y,z)\mathbb{1}_{[t_{1},t_{2})}(t)+\dots
\medskip \\
\quad +f_{n}(t,\phi (t_{1}),\phi (t_{2})-\phi (t_{1}),\dots ,\phi (t)-\phi
(t_{n-1}))\mathbb{1}_{[t_{n-1},T]}(t),\bigskip \\
h(\phi )=\varphi (\phi (t_{1}),\phi (t_{2})-\phi (t_{1}),\dots ,\phi
(T)-\phi (t_{n-1})),%
\end{array}%
\end{equation*}%
for every $\phi \in \mathbb{\Lambda }$.

Let us first show that the terms $(X^{t,\phi }(t_{1}),\dots ,X^{t,\phi
}(r)-X^{t,\phi }(t_{k}))$, $0\leq k\leq n-1$ are related to the solution of
a SDE of It\^{o} type in $\mathbb{R}^{n\times d}$. Let%
\begin{equation*}
\tilde{b}(t,x_{1},\dots ,x_{n}):=\left(
\begin{array}{l}
b_{1}(t,x_{1})\mathbb{1}_{[0,t_{1})}(t)\medskip \\
b_{2}(t,x_{1},x_{2})\mathbb{1}_{[t_{1},t_{2})}(t)\medskip \\
\vdots \\
b_{n}(t,x_{1},\dots ,x_{n})\mathbb{1}_{[t_{n-1},t_{n})}(t)%
\end{array}%
\right) ,
\end{equation*}%
and%
\begin{equation*}
\tilde{\sigma}(t,x_{1},\dots ,x_{n}):=\left(
\begin{array}{l}
\sigma _{1}(t,x_{1})\mathbb{1}_{[0,t_{1})}(t)\medskip \\
\sigma _{2}(t,x_{1},x_{2})\mathbb{1}_{[t_{1},t_{2})}(t)\medskip \\
\vdots \\
\sigma _{n}(t,x_{1},\dots ,x_{n})\mathbb{1}_{[t_{n-1},t_{n})}(t)%
\end{array}%
\right) .
\end{equation*}%
Let then $\tilde{X}^{t,\boldsymbol{x}}$, with $t\in \lbrack 0,T]$ and $%
\boldsymbol{x}=(x_{1},\dots ,x_{n})\in \mathbb{R}^{d\times n}$ be the unique
solution of the following stochastic differential equation:%
\begin{equation*}
\tilde{X}^{t,\boldsymbol{x}}(s)=\boldsymbol{x}+\int_{t}^{s}\tilde{b}(r,%
\tilde{X}^{t,\boldsymbol{x}})dr+\int_{t}^{s}\tilde{\sigma}(r,\tilde{X}^{t,%
\boldsymbol{x}})dW(r),\ s\in \lbrack t,T].
\end{equation*}%
We assert that, for $t\in \lbrack t_{k_{0}},t_{k_{0}+1}),\ s\in \lbrack
t_{k},t_{k+1}]$, $s\geq t$, with $0\leq k_{0}\leq k\leq n-1$, we have%
\begin{equation*}
(X^{t,\phi }(t_{1}),\dots ,X^{t,\phi }(s)-X^{t,\phi }(t_{k}))=\Big(\tilde{X}%
^{i,t,(\phi (t_{1}),\dots ,\phi (t)-\phi (t_{k_{0}}),0,\dots ,0)}(s)\Big)_{i=%
\overline{1,k+1}}\,.
\end{equation*}%
Let us stress that for $k=0$ this reads $X^{t,\phi }(s)=\tilde{X}^{1,t,\phi
(t)}(s)$; so that for $k>k_{0}=0$, it is interpreted as $(X^{t,\phi
}(t_{1}),\dots ,X^{t,\phi }(s)-X^{t,\phi }(t_{k}))=\big(\tilde{X}^{i,t,(\phi
(t),0,\dots ,0)}(s)\big)_{i=\overline{1,k+1}}\,.$

We will prove this statement by induction on $k$. If $k=0$, then $k_{0}=0$
and we obviously have%
\begin{equation*}
X^{t,\phi }(s)=\tilde{X}^{1,t,\phi (t)}(s).
\end{equation*}%
Let us suppose that the statement holds true for $k-1$; for the sake of
brevity, in what follows we will denote $\boldsymbol{x}:=(\phi (t_{1}),\dots
,\phi (t)-\phi (t_{k_{0}}),0,\dots ,0)$.

If $k_{0}\leq k-1$ then, from the induction hypothesis, we have%
\begin{equation*}
(X^{t,\phi }(t_{1}),\dots ,X^{t,\phi }(r)-X^{t,\phi }(t_{k-1}))=\left(
\tilde{X}^{i,t,(\phi (t_{1}),\dots ,\phi (t)-\phi (t_{k_{0}}),0,\dots
,0)}(r)\right) _{i=\overline{1,k}}
\end{equation*}%
for every $r\in \lbrack t_{k-1},t_{k}]$, so that, for $s\in \lbrack
t_{k},t_{k+1}]$ we have,%
\begin{equation*}
\tilde{X}^{j,t,\boldsymbol{x}}(s)=\tilde{X}^{j,t,\boldsymbol{x}%
}(t_{k})=X^{t,\phi }(t_{j})-X^{t,\phi }(t_{j-1}),\ 1\leq j\leq k\,,
\end{equation*}%
with the convention $X^{t,\phi }(t_{0})=0$.

In the case $k_{0}=k$, for $s\in\lbrack t,t_{k+1}]$ we also have:
\begin{equation*}
\tilde{X}^{j,t,\boldsymbol{x}}(s)=\boldsymbol{x}^{j}=\phi(t_{j})-\phi
(t_{j-1})=X^{t,\phi}(t_{j})-X^{t,\phi}(t_{j-1}),\ 1\leq j\leq k\,,
\end{equation*}
again with the convention $\phi(t_{0})=0$.

Consequently, on $[t\vee t_{k},t_{k+1}]$ it holds%
\begin{equation*}
\begin{array}{l}
\displaystyle\tilde{X}^{k+1,t,\boldsymbol{x}}(s)=\boldsymbol{x}%
^{k+1}+\int_{t\vee t_{k}}^{s}b_{k+1}(r,\tilde{X}^{1,t,\boldsymbol{x}%
}(r),\dots ,\tilde{X}^{k+1,t,\boldsymbol{x}}(r))dr\medskip \\
\displaystyle\quad +\int_{t\vee t_{k}}^{s}\sigma _{k+1}(r,\tilde{X}^{1,t,%
\boldsymbol{x}}(r),\dots ,\tilde{X}^{k+1,t,\boldsymbol{x}}(r))dW(r)\medskip
\\
\displaystyle=\boldsymbol{x}^{k+1}+\int_{t\vee t_{k}}^{s}b_{k+1}(r,X^{t,\phi
}(t_{1}),\dots ,X^{t,\phi }(t_{k})-X^{t,\phi }(t_{k-1}),\tilde{X}^{k+1,t,%
\boldsymbol{x}}(r))dr\medskip \\
\displaystyle\quad +\int_{t\vee t_{k}}^{s}\sigma _{k+1}(r,X^{t,\phi
}(t_{1}),\dots ,X^{t,\phi }(t_{k})-X^{t,\phi }(t_{k-1}),\tilde{X}^{k+1,t,%
\boldsymbol{x}}(r))dW(r).%
\end{array}%
\end{equation*}%
If $k_{0}\leq k-1$ then $\boldsymbol{x}^{k+1}=0$; if $k_{0}=k$, then $%
\boldsymbol{x}^{k+1}=\phi (t)-\phi (t_{k_{0}})=X^{t,\phi }(t)-X^{t,\phi
}(t_{k})$. By uniqueness, since $X^{t,\phi }$ satisfies%
\begin{equation*}
\begin{array}{l}
\displaystyle X^{t,\phi }(s)=X^{t,\phi }(t\vee t_{k})+\int_{t\vee
t_{k}}^{s}b_{k+1}(r,X^{t,\phi }(t_{1}),\dots ,X^{t,\phi }(r)-X^{t,\phi
}(t_{k}))dr\medskip \\
\displaystyle\quad +\int_{t\vee t_{k}}^{s}\sigma _{k+1}(r,X^{t,\phi
}(t_{1}),\dots ,X^{t,\phi }(r)-X^{t,\phi }(t_{k}))dW(r),\ s\in \lbrack t\vee
t_{k},t_{k+1}].%
\end{array}%
\end{equation*}%
we obtain $\tilde{X}^{k+1,t,\boldsymbol{x}}(s)=X^{t,\phi }(s)-X^{t,\phi
}(t_{k})$, for all $s\in \lbrack t\vee t_{k},t_{k+1}]$. We thus have proved
that the statement holds true for $k$.

The next step is to derive a Feynman--Kac type formula linking $X^{t,\phi }$
and $Y^{t,\phi }$. For that, let us consider the following BSDE:%
\begin{equation*}
\tilde{Y}^{t,\boldsymbol{x}}(s)=\varphi (\tilde{X}^{t,\boldsymbol{x}%
}(T))+\int_{t}^{T}\tilde{f}(r,\tilde{X}^{t,\boldsymbol{x}}(r),\tilde{Y}^{t,%
\boldsymbol{x}}(r),\tilde{Z}^{t,\boldsymbol{x}}(r))dr+\int_{t}^{s}\tilde{Z}%
^{t,\boldsymbol{x}}dW(r),\ s\in \lbrack t,T],
\end{equation*}%
where $\tilde{f}$ is defined by%
\begin{equation*}
\begin{array}{l}
\displaystyle\tilde{f}(t,x_{1},\dots ,x_{n},y,z)\medskip \\
\displaystyle=f_{1}(t,x_{1},y,z)\mathbb{1}%
_{[0,t_{1})}(t)+f_{2}(t,x_{1},x_{2},y,z)\mathbb{1}_{[t_{1},t_{2})}(t)+\dots
+f_{n}(t,x_{1},x_{2},\dots ,x_{n})\mathbb{1}_{[t_{n-1},T]}(t).%
\end{array}%
\end{equation*}%
From \cite{ka-pe-qu/97}, there exist some measurable functions $\tilde{u},%
\tilde{d}$ such that for all $\left( t,\boldsymbol{x}\right) \in \lbrack
0,T]\times \mathbb{R}^{d\times n}$ it holds%
\begin{align*}
\tilde{Y}^{t,\boldsymbol{x}}(s)& =\tilde{u}(s,\tilde{X}^{t,\boldsymbol{x}%
}(s)),\quad \text{for all }s\in \lbrack t,T]; \\
\tilde{Z}^{t,\boldsymbol{x}}(s)& =\tilde{d}(s,\tilde{X}^{t,\boldsymbol{x}%
}(s))\tilde{\sigma}(s,\tilde{X}^{t,\boldsymbol{x}}),\ ds\text{-a.e. on }%
[t,T].
\end{align*}%
On the other hand, let $t\in \lbrack t_{k_{0}},t_{k_{0}+1})$, with $0\leq
k_{0}\leq n-1$ and denote, for simplicity, $\boldsymbol{x}=(\phi
(t_{1}),\dots ,\phi (t)-\phi (t_{k_{0}}),0,\dots ,0)$.

If $s\in \lbrack t_{k},t_{k+1}]$, $s\geq t$, and therefore $k\geq k_{0}$, we
have%
\begin{equation*}
(X^{t,\phi }(t_{1}),\dots ,X^{t,\phi }(s)-X^{t,\phi }(t_{k}))=\left( \tilde{X%
}^{i,t,\boldsymbol{x}}(s)\right) _{i=\overline{1,k+1}}
\end{equation*}%
Thus, on $[t\vee t_{k},t_{k+1}]$%
\begin{align*}
\tilde{f}(s,\tilde{X}^{t,\boldsymbol{x}}(s),\tilde{Y}^{t,\boldsymbol{x}}(s),%
\tilde{Z}^{t,\boldsymbol{x}}(s))& =f_{k+1}(s,X^{t,\phi }(t_{1}),\dots
,X^{t,\phi }(s)-X^{t,\phi }(t_{k}),\tilde{Y}^{t,\boldsymbol{x}}(s),\tilde{Z}%
^{t,\boldsymbol{x}}(s)) \\
& =f(s,X^{t,\phi },\tilde{Y}^{t,\boldsymbol{x}}(s),\tilde{Z}^{t,\boldsymbol{x%
}}(s)).
\end{align*}%
Allowing $k$ to vary, we obtain the equality%
\begin{equation*}
\tilde{f}(s,\tilde{X}^{t,\boldsymbol{x}}(s),\tilde{Y}^{t,\boldsymbol{x}}(s),%
\tilde{Z}^{t,\boldsymbol{x}}(s))=f(s,X^{t,\phi },\tilde{Y}^{t,\boldsymbol{x}%
}(s),\tilde{Z}^{t,\boldsymbol{x}}(s)),\quad \text{for all }s\in \lbrack t,T].
\end{equation*}%
Also,%
\begin{equation*}
\varphi (\tilde{X}^{t,\boldsymbol{x}}(T))=\varphi (X^{t,\phi }(t_{1}),\dots
,X^{t,\phi }(T)-X^{t,\phi }(t_{n-1}))=h(X^{t,\phi })\,,
\end{equation*}%
and by the uniqueness of the solution of the BSDE, we get that%
\begin{equation*}
(\tilde{Y}^{t,\boldsymbol{x}},\tilde{Z}^{t,\boldsymbol{x}})=(Y^{t,\phi
},Z^{t,\phi })
\end{equation*}%
and, consequently,%
\begin{equation*}
\begin{array}{l}
Y^{t,\phi }(s)=\tilde{u}(s,X^{t,\phi }(t_{1}),\dots ,X^{t,\phi
}(s)-X^{t,\phi }(t_{k}),0,\dots ,0),\quad \text{for all }s\in \lbrack t\vee
t_{k},t_{k+1}],\medskip \\
Z^{t,\phi }(s)=\tilde{d}_{k+1}(s,X^{t,\phi }(t_{1}),\dots ,X^{t,\phi
}(s)-X^{t,\phi }(t_{k}),0,\dots ,0)\cdot \medskip \\
\quad \quad \quad \quad \quad \cdot \sigma _{k+1}(s,X^{t,\phi }(t_{1}),\dots
,X^{t,\phi }(s)-X^{t,\phi }(t_{k})),\ ds\text{-a.e. on }[t\vee
t_{k},t_{k+1}].%
\end{array}%
\end{equation*}%
By setting, for $0\leq k\leq n-1$, $t\in \lbrack t_{k},t_{k+1})$ and $\phi
\in \mathbb{\Lambda }$,%
\begin{align*}
u(t,\phi )& :=\tilde{u}(t,\phi (t_{1}),\dots ,\phi (t)-\phi (t_{k}),0,\dots
,0); \\
d(t,\phi )& :=\tilde{d}_{k+1}(t,\phi (t_{1}),\dots ,\phi (t)-\phi
(t_{k}),0,\dots ,0),
\end{align*}%
we get that, for every $t\in \lbrack 0,T]$ and $\phi \in \mathbb{\Lambda }$,%
\begin{align*}
Y^{t,\phi }(s)& =u(s,X^{t,\phi }),\quad \text{for all }s\in \lbrack t,T]; \\
Z^{t,\phi }(s)& =d(s,X^{t,\phi })\sigma (t,X^{t,\phi }),\ ds\text{-a.e. on }%
[t,T].
\end{align*}%
\noindent \textbf{\textit{Step II.}}

Let us notice that the same conclusion holds for $b$, $\sigma $ and $f$ of
the form%
\begin{equation*}
\begin{array}{l}
\displaystyle b(t,\phi )=b_{1}(t,\phi (t))\mathbf{1}_{[0,t_{1})}(t)+b_{2}(t,%
\phi (t_{1}),\phi (t))\mathbf{1}_{[t_{1},t_{2})}(t)+\dots \medskip \\
\displaystyle\quad +b_{n}(t,\phi (t_{1}),\dots ,\phi (t_{n-1}),\phi (t))%
\mathbf{1}_{[t_{n-1},T]}(t),\bigskip \\
\displaystyle\sigma (t,\phi )=\sigma _{1}(t,\phi (t))\mathbf{1}%
_{[0,t_{1})}(t)+\sigma _{2}(t,\phi (t_{1}),\phi (t))\mathbf{1}%
_{[t_{1},t_{2})}(t)+\dots \medskip \\
\displaystyle\quad +\sigma _{n}(t,\phi (t_{1}),\dots ,\phi (t_{n-1}),\phi
(t))\mathbf{1}_{[t_{n-1},T]}(t)%
\end{array}%
\end{equation*}%
and%
\begin{equation*}
\begin{array}{l}
\displaystyle f(t,\phi ,y,z)=f_{1}(t,\phi (t),y,z)\mathbf{1}%
_{[0,t_{1})}(t)+f_{2}(t,\phi (t_{1}),\phi (t),y,z)\mathbf{1}%
_{[t_{1},t_{2})}(t)+\dots \medskip \\
\displaystyle\quad +f_{n}(t,\phi (t_{1}),\dots ,\phi (t_{n-1}),\phi (t))%
\mathbf{1}_{[t_{n-1},T]}(t),\bigskip \\
\displaystyle h(\phi )=\varphi (\phi (t_{1}),\dots ,\phi (t_{n-1}),\phi (T)),%
\end{array}%
\end{equation*}%
for every $\phi \in \mathbb{\Lambda }$.$\medskip $

\noindent\textbf{\textit{Step III.}}

For $0=t_{0}\leq t_{1}<\dots<t_{k}\leq T$ and $x_{1},\dots,x_{k}\in \mathbb{R%
}^{d}$, let $\Phi_{t_{1},\dots,t_{k}}^{x_{1},\dots,x_{k}}:[0,T]\rightarrow%
\mathbb{R}^{d}$ be such that
\begin{equation*}
\Phi_{t_{1},\dots,t_{k}}^{x_{1},\dots,x_{k}}(t_{i})=x_{i},\ i=\overline {1,k}%
;\quad\Phi_{t_{1},\dots,t_{k}}^{x_{1},\dots,x_{k}}(T)=x_{k},\quad
\Phi_{t_{1},\dots,t_{k}}^{x_{1},\dots,x_{k}}(0)=x_{1}
\end{equation*}
and is prolonged to $\left[ 0,T\right] $ by linear interpolation.

Let us consider partitions of $[0,T]$, $0=t_{0}^{n}<t_{1}^{n}<%
\dots<t_{n}^{n}=T$, $t_{k}^{n}:=\frac{kT}{n}$. For $k\in\{1,\dots,n\}$, $%
t\in\lbrack0,T]$ and $x_{1},\dots,x_{k}\in\mathbb{R}^{d}$, we define%
\begin{equation*}
b_{k}^{n}(t,x_{1},\dots,x_{k}):=b\big(t,\Phi_{t_{1}^{n},%
\dots,t_{k}^{n}}^{x_{1},\dots,x_{k}}\big).
\end{equation*}
Notice that $b_{k}^{n}(t,\cdot)$ are continuous functions.

Finally, for $t\in \lbrack 0,T]$ and $x_{1},\dots ,x_{n}\in \mathbb{R}^{d}$
we set%
\begin{equation*}
\bar{b}_{n}(t,x_{1},\dots ,x_{n}):=b_{1}^{n}(t,x_{1})\mathbb{1}%
_{[0,t_{1}^{n})}+\dots +b_{n}^{n}(t,x_{1},\dots ,x_{n})\mathbb{1}%
_{[t_{n-1}^{n},t_{n}^{n}]}
\end{equation*}%
and, for $\left( t,\phi \right) \in \lbrack 0,T]\times \mathbb{\Lambda }$,%
\begin{equation*}
b_{n}(t,\phi ):=\bar{b}_{n}(t,\phi (t\wedge t_{1}^{n}),\dots ,\phi (t\wedge
t_{n}^{n})).
\end{equation*}%
If $t\in \lbrack t_{k-1}^{n},t_{k}^{n})$ with $k\in \{1,\dots ,n\}$, or if $%
t=T$ and $k=n$, then%
\begin{equation*}
b_{n}(t,\phi )=b_{k}^{n}(t,\phi (t_{1}^{n}),\dots ,\phi (t_{k-1}^{n}),\phi
(t)),
\end{equation*}%
so $b_{n}$ has the form described in \textbf{\textit{Step II.} }In this case
it also holds%
\begin{align*}
\left\vert b_{n}(t,\phi )-b(t,\phi )\right\vert & =\big|b\big(t,\Phi
_{t_{1}^{n},\dots ,t_{k-1}^{n},t_{k}^{n}}^{\phi (t_{1}^{n}),\dots ,\phi
(t_{k-1}^{n}),\phi (t)}\big)-b(t,\phi )\big| \\
& \leq \sup_{\left\Vert \psi -\phi \right\Vert _{T}\leq \omega (\phi
,T/n)}\left\vert b\left( t,\psi \right) -b(t,\phi )\right\vert \leq \ell
\omega (\phi ,T/n),
\end{align*}%
where $\omega (\phi ,\epsilon ):=\sup_{\left\vert s-r\right\vert \leq
\epsilon }\left\vert \phi (s)-\phi (r)\right\vert $.

In a similar way, one introduces $\sigma _{n},f_{n},h_{n}$ and we have, for
every $(t,\phi ,y,z)\in \lbrack 0,T]\times \mathbb{\Lambda }\times \mathbb{R}%
\times \mathbb{R}^{d^{\prime }}$,%
\begin{align*}
\left\vert \sigma _{n}(t,\phi )-\sigma (t,\phi )\right\vert & \leq \ell
\omega (\phi ,T/n);\smallskip \\
\left\vert f_{n}(t,\phi ,y,z)-f(t,\phi ,y,z)\right\vert & \leq
\sup_{\left\Vert \psi -\phi \right\Vert _{T}\leq \omega (\phi
,T/n)}\left\vert f\left( t,\psi ,y,z\right) -f(t,\phi ,y,z)\right\vert
;\smallskip \\
\left\vert h_{n}(\phi )-h(\phi )\right\vert & \leq \sup_{\left\Vert \psi
-\phi \right\Vert _{T}\leq \omega (\phi ,T/n)}\left\vert h\left( \psi
\right) -h(\phi )\right\vert .
\end{align*}%
We also have that $b_{n}$, $\sigma _{n}$, $f_{n}$ and $h_{n}$ satisfy
assumptions $\mathrm{(A}_{1}\mathrm{)}$,$\mathrm{(A}_{2}\mathrm{)}$,$\mathrm{%
(A}_{6}\mathrm{)}$,$\mathrm{(A}_{7}\mathrm{)}$ with the same constants.
Corresponding to these coefficients, we define the solution of the
associated FBSDE,
\begin{equation*}
\left( X^{n,t,\phi },Y^{n,t,\phi },Z^{n,t,\phi }\right) \,.
\end{equation*}

By the Feynman--Kac formula, already proven in this case, we have the
existence of some non-anticipative functions $u_{n}$ and $d_{n}$ such that,
for every $(t,\phi )\in \lbrack 0,T]\times \mathbb{\Lambda }$ we have:
\begin{align*}
Y^{n,t,\phi }(s)& =u_{n}(s,X^{n,t,\phi }),\quad \text{for all }s\in \lbrack
t,T]; \\
Z^{n,t,\phi }(s)& =d_{n}(s,X^{n,t,\phi })\sigma (t,X^{n,t,\phi }),\quad ds%
\text{--a.e. on }[t,T].
\end{align*}%
In order to pass to the limit in the first formula above, we need to show
that $X^{n,t,\phi }\rightarrow X^{t,\phi }$ in probability in $\mathbb{%
\Lambda }$ and that $u_{n}$ converges to $u\ $on compact subsets of $\mathbb{%
\Lambda }$.

Let $t\in \lbrack 0,T]$ and $\phi ,\phi ^{\prime }\in \mathbb{\Lambda }$. By
It\^{o}'s formula we have%
\begin{equation*}
\begin{array}{l}
\displaystyle\big|X^{n,t,\phi ^{\prime }}(s)-X^{t,\phi }(s)\big|^{2}=|\phi
^{\prime }(t)-\phi (t)|^{2}\medskip \\
\displaystyle\quad +2\int_{t}^{s}\big\langle b_{n}(r,X^{n,t,\phi ^{\prime
}})-b(r,X^{t,\phi }),X^{n,t,\phi ^{\prime }}(r)-X^{t,\phi }(r)\big\rangle %
dr+\int_{t}^{s}\big|\sigma _{n}(r,X^{n,t,\phi ^{\prime }})-\sigma
(r,X^{t,\phi })\big|^{2}dr\medskip \\
\displaystyle\quad +2\int_{t}^{s}\big\langle\sigma _{n}(r,X^{n,t,\phi
^{\prime }})-\sigma (r,X^{t,\phi }),\big(X^{n,t,\phi ^{\prime
}}(r)-X^{t,\phi }(r)\big)dW(r)\big\rangle.%
\end{array}%
\end{equation*}%
By standard computations using Gronwall's lemma, Burkholder--Davis--Gundy
inequalities and Theorem \ref{theorem 0}, we get, for some arbitrary $p>1,$%
\begin{equation}
\begin{array}{l}
\displaystyle\mathbb{E}\sup_{s\in \left[ 0,T\right] }\big|X^{n,t,\phi
^{\prime }}(s)-X^{t,\phi }(s)\big|^{2p}\leq C\left( ||\phi ^{\prime }-\phi
||_{T}^{2p}+\mathbb{E}\omega (X^{t,\phi },T/n)^{2p}\right) \\
\displaystyle\leq C\left( ||\phi ^{\prime }-\phi ||_{T}^{2p}+\frac{1+||\phi
||_{T}^{2p}}{n^{p-1}}+\omega (\phi ,T/n)^{2p}\right) ,%
\end{array}
\label{unif conv_X}
\end{equation}%
where the positive constant $C$ is depending only on $p$ and the parameters
of our FBSDE; also, as above, the constant $C$ is allowed to change value
from line to line during this proof.

Applying now It\^{o}'s formula to $e^{\beta s}\big|Y^{n,t,\phi ^{\prime
}}(s)-Y^{t,\phi }(s)\big|^{2}$, we obtain, for $\beta >0$,%
\begin{equation*}
\begin{array}{l}
\displaystyle e^{\beta s}\big|\Delta _{Y}^{n,t,\phi ,\phi ^{\prime }}(s)\big|%
^{2}+\beta \int_{s}^{T}e^{\beta r}\big|\Delta _{Z}^{n,t,\phi ,\phi ^{\prime
}}(r)\big|^{2}dr+\int_{s}^{T}e^{\beta r}\big|\Delta _{Z}^{n,t,\phi ,\phi
^{\prime }}(r)\big|^{2}dr\medskip \\
\displaystyle=e^{\beta T}\big|\Delta _{h}^{n,t,\phi ,\phi ^{\prime }}\big|%
^{2}+2\int_{s}^{T}\big\langle\Delta _{f}^{n,t,\phi ,\phi ^{\prime
}}(r),e^{\beta r}\Delta _{Y}^{n,t,\phi ,\phi ^{\prime }}(r)\big\rangle %
dr\medskip \\
\displaystyle+2\int_{s}^{T}\big\langle f_{n}(s,X^{n,t,\phi ^{\prime
}},Y^{n,t,\phi ^{\prime }}\left( s\right) ,Z^{n,t,\phi ^{\prime }}\left(
s\right) )-f_{n}(s,X^{n,t,\phi ^{\prime }},Y^{t,\phi }\left( s\right)
,Z^{t,\phi }\left( s\right) ),e^{\beta r}\Delta _{Y}^{n,t,\phi ,\phi
^{\prime }}(r)\big\rangle dr\medskip \\
\displaystyle-2\int_{s}^{T}\big\langle e^{\beta r}\Delta _{Y}^{n,t,\phi
,\phi ^{\prime }}(r),\Delta _{Z}^{n,t,\phi ,\phi ^{\prime }}(r)dW(r)%
\big\rangle,\medskip%
\end{array}%
\end{equation*}%
where, for the sake of simplicity, we have denoted%
\begin{equation*}
\begin{array}{l}
\displaystyle\Delta _{Y}^{n,t,\phi ,\phi ^{\prime }}(s):=Y^{n,t,\phi
^{\prime }}\left( s\right) -Y^{t,\phi }\left( s\right) ;\medskip \\
\displaystyle\Delta _{Z}^{n,t,\phi ,\phi ^{\prime }}(s):=Z^{n,t,\phi
^{\prime }}\left( s\right) -Z^{t,\phi }\left( s\right) ;\medskip \\
\displaystyle\Delta _{h}^{n,t,\phi ,\phi ^{\prime }}:=h_{n}(X^{n,t,\phi
^{\prime }})-h(X^{t,\phi });\medskip \\
\displaystyle\Delta _{f}^{n,t,\phi ,\phi ^{\prime }}(s):=f_{n}(s,X^{n,t,\phi
^{\prime }},Y^{t,\phi }\left( s\right) ,Z^{t,\phi }\left( s\right)
)-f(s,X^{t,\phi },Y^{t,\phi }\left( s\right) ,Z^{t,\phi }\left( s\right) ).%
\end{array}%
\end{equation*}%
Again, for $\beta $ sufficiently large, exploiting Burkholder--Davis--Gundy
inequalities and the Lipschitz property in $\left( y,z\right) \in \mathbb{R}%
\times \mathbb{R}^{d^{\prime }}$ of $f_{n}$, we infer that%
\begin{equation}
\mathbb{E}\sup_{s\in \lbrack t,T]}\big|\Delta _{Y}^{n,t,\phi ,\phi ^{\prime
}}(s)\big|^{2}+\mathbb{E}\int_{t}^{T}\big|\Delta _{Z}^{n,t,\phi ,\phi
^{\prime }}(r)\big|^{2}dr\leq C\mathbb{E}\left[ \big|\Delta _{h}^{n,t,\phi
,\phi ^{\prime }}\big|^{2}+\int_{t}^{T}\big|\Delta _{f}^{n,t,\phi ,\phi
^{\prime }}(r)\big|^{2}dr\right] .  \label{unif conv_Y}
\end{equation}

We now have%
\begin{equation}
\begin{array}{l}
\displaystyle\big|\Delta _{h}^{n,t,\phi ,\phi ^{\prime }}\big|\leq \big|%
h_{n}(X^{n,t,\phi ^{\prime }})-h(X^{n,t,\phi ^{\prime }})\big|+\big|%
h(X^{n,t,\phi ^{\prime }})-h(X^{t,\phi })\big|\medskip \\
\displaystyle\leq \sup_{||\psi -X^{n,t,\phi ^{\prime }}||_{T}\leq \omega
(X^{n,t,\phi ^{\prime }},T/n)}\big|h\left( \psi \right) -h(X^{n,t,\phi
^{\prime }})\big|+\big|h(X^{n,t,\phi ^{\prime }})-h(X^{t,\phi })\big|\medskip
\\
\displaystyle\leq \sup_{||\psi -X^{t,\phi }||_{T}\leq \omega (X^{t,\phi
},T/n)+3||X^{n,t,\phi ^{\prime }}-X^{t,\phi }||_{T}}|h\left( \psi \right)
-h(X^{t,\phi })|+2\big|h(X^{n,t,\phi ^{\prime }})-h(X^{t,\phi })\big|\medskip
\\
\displaystyle\leq 3\sup_{||\psi -X^{t,\phi }||_{T}\leq \omega (X^{t,\phi
},T/n)+3||X^{n,t,\phi ^{\prime }}-X^{t,\phi }||_{T}}|h\left( \psi \right)
-h(X^{t,\phi })|,%
\end{array}
\label{unif conv_h}
\end{equation}%
since%
\begin{equation*}
\omega (X^{n,t,\phi ^{\prime }},T/n)\leq \omega (X^{t,\phi
},T/n)+2||X^{n,t,\phi ^{\prime }}-X^{t,\phi }||_{T}.
\end{equation*}%
Similarly, we obtain%
\begin{equation}
\begin{array}{r}
\big|\Delta _{f}^{n,t,\phi ,\phi ^{\prime }}(s)\big|\leq 3\sup_{||\psi
-X^{t,\phi }||_{T}\leq \omega (X^{t,\phi },T/n)+3||X^{n,t,\phi ^{\prime
}}-X^{t,\phi }||_{T}}\big|f\left( s,\psi ,Y^{t,\phi }\left( s\right)
,Z^{t,\phi }\left( s\right) \right) \medskip \\
-f(s,X^{t,\phi },Y^{t,\phi }\left( s\right) ,Z^{t,\phi }\left( s\right) )%
\big|.%
\end{array}
\label{unif conv_f}
\end{equation}%
Let now $\left( t,\phi \right) \in \lbrack 0,T]\times \mathbb{\Lambda }$ and
let $\left( \phi _{n}\right) $ be a sequence converging to $\phi $ in $%
\mathbb{\Lambda }$. It is clear from relation (\ref{unif conv_X}) that $%
\left( X^{n,t,\phi _{n}}\right) $ converges in $L^{2p}\left( \Omega ;\mathbb{%
\Lambda }\right) $ to $X^{t,\phi }$, therefore there exists a subsequence
converging a.s. in $\mathbb{\Lambda }$ to $X^{t,\phi }$. Without restricting
the generality, we will still denote $\left( X^{n,t,\phi _{n}}\right) $ this
subsequence. Since $\omega (X^{t,\phi },T/n)+3||X^{n,t,\phi ^{\prime
}}-X^{t,\phi }||_{T}$ converges to $0$ a.s., it is clear by relations (\ref%
{unif conv_h}) and (\ref{unif conv_X}) that $\Delta _{h}^{n,t,\phi ,\phi
^{\prime }}$ and $\Delta _{f}^{n,t,\phi ,\phi ^{\prime }}(s)$ converge to $0$%
, a.s., respectively $ds\mathbb{P}$-a.e.

Then, by the dominated convergence theorem, we obtain
\begin{equation*}
\mathbb{E}\left[ \big|\Delta _{h}^{n,t,\phi ,\phi _{n}}\big|^{2}+\int_{t}^{T}%
\big|\Delta _{f}^{n,t,\phi ,\phi _{n}}(r)\big|^{2}dr\right] \rightarrow 0,
\end{equation*}%
which, combined with estimate (\ref{unif conv_Y}), gives that $\mathbb{E}%
\sup_{s\in \lbrack t,T]}|Y^{n,t,\phi _{n}}\left( s\right) -Y^{t,\phi }\left(
s\right) |^{2}\rightarrow 0.$ On the one hand, assuming $\phi _{n}\equiv
\phi $, this implies%
\begin{equation*}
\mathbb{E}\sup_{s\in \lbrack t,T]}|Y^{n,t,\phi }\left( s\right) -Y^{t,\phi
}\left( s\right) |^{2}\rightarrow 0\,;
\end{equation*}%
on the other hand, letting $s=t$, we obtain%
\begin{equation*}
u_{n}(t,\phi _{n})\rightarrow u(t,\phi ).
\end{equation*}%
Therefore we can pass to the limit in the relation%
\begin{equation*}
Y^{n,t,\phi }(s)=u_{n}(s,X^{n,t,\phi }),\quad \text{for all }s\in \lbrack
t,T],\ \text{a.s.}\,,
\end{equation*}%
and find the conclusion of the theorem.\hfill $\medskip $
\end{proof}

We are able now to prove the Feynman--Kac formula in the case where the
generator $f$ depends of the past values of $Y$.

\begin{theorem}
\label{Feynman-Kac formula}Let us assume that $\mathrm{(A}_{1}\mathrm{)}$,$%
\mathrm{(A}_{2}\mathrm{)}$,$\mathrm{(A}_{6}\mathrm{)}$,$\mathrm{(A}_{7}%
\mathrm{)}$ hold and condition (\ref{condition_KT}) is verified. Then%
\begin{equation*}
Y^{t,\phi }\left( s\right) =u(s,X^{t,\phi }),\quad \text{for all }s\in \left[
0,T\right] ,\quad \text{a.s.,}
\end{equation*}%
for any $\left( t,\phi \right) \in \left[ 0,T\right] \times \mathbb{\Lambda }%
,$ where $Y^{t,\phi }$ is the solution of BSDE (\ref{BSDE_delayed 1}) and $u:%
\left[ 0,T\right] \times \mathbb{\Lambda }\rightarrow \mathbb{R}$ is defined
by (\ref{def_u}).
\end{theorem}

\begin{proof}
The continuity of $u$ was already asserted in Theorem \ref{theorem 2}.

Let now consider BSDE with delayed generator%
\begin{equation}
Y^{t,\phi }\left( s\right) =h(X^{t,\phi })+\int_{s}^{T}f(r,X^{t,\phi
},Y^{t,\phi }(r),Z^{t,\phi }(r),Y_{r}^{t,\phi })dr-\int_{s}^{T}Z^{t,\phi
}\left( r\right) dW\left( r\right) \,,  \label{FBSDE 3}
\end{equation}%
and the corresponding iterative process, given by the equations%
\begin{equation}
\begin{array}{r}
\displaystyle Y^{n+1,t,\phi }\left( s\right) =h(X^{t,\phi
})+\int_{s}^{T}f(r,X^{t,\phi },Y^{n+1,t,\phi }(r),Z^{n+1,t,\phi
}(r),Y_{r}^{n,t,\phi })dr\medskip \\
\displaystyle-\int_{s}^{T}Z^{n+1,t,\phi }\left( r\right) dW\left( r\right)
,\;s\in \lbrack t,T],%
\end{array}
\label{BSDE_iter}
\end{equation}%
with $Y^{0,t,\phi }\equiv 0$ and $Z^{0,t,\phi }\equiv 0$.

Let us suppose that there exists a $\mathbb{F}$--progressively measurable
functional $u_{n}:[0,T]\times \mathbb{\Lambda }\rightarrow \mathbb{R}$ such
that $u_{n}$ is continuous and $Y^{n,t,\phi }\left( s\right)
=u_{n}(s,X^{t,\phi }),$ for every $t,s\in \left[ 0,T\right] $ and $\phi \in
\mathbb{\Lambda }.$

Let us now consider the term%
\begin{equation*}
Y_{r}^{n,t,\phi }=\big(Y^{n,t,\phi }(r+\theta )\big)_{\theta \in \lbrack
-\delta ,0]}\,;
\end{equation*}%
since $Y^{n,t,\phi }(r+\theta )=u_{n}(r+\theta ,X^{t,\phi })$ if $r+\theta
\geq 0$ and $Y^{n,t,\phi }(r+\theta )=Y^{n,t,\phi }(0)=u_{n}(0,X^{t,\phi })$
if $r+\theta <0$, by defining%
\begin{equation*}
\tilde{u}_{n}(t,\phi ):=\left( u_{n}(\mathbb{1}_{[0,T]}(t+\theta ),\phi
)\right) _{\theta \in \lbrack -\delta ,0]}\,,
\end{equation*}%
we have%
\begin{equation*}
Y_{r}^{n,t,\phi }=\tilde{u}_{n}(r,X^{t,\phi }).
\end{equation*}%
We can then apply Theorem \ref{Feynman-Kac formula_preliminary} in order to
infer that%
\begin{equation*}
Y^{n+1,t,\phi }\left( s\right) =u_{n+1}(s,X^{t,\phi }),
\end{equation*}%
for a continuous non-anticipative functional $u_{n+1}:[0,T]\times \mathbb{%
\Lambda }\rightarrow \mathbb{R}$.

Notice that $\left( Y^{n,t,\phi },Z^{n,t,\phi }\right) $ is the Picard
iteration sequence used for constructing the solution $\left( Y^{t,\phi
},Z^{t,\phi }\right) $:%
\begin{equation*}
\left( Y^{n+1,\cdot ,\phi },Z^{n+1,\cdot ,\phi }\right) =\Gamma (Y^{n,\cdot
,\phi },Z^{n,\cdot ,\phi })\,,
\end{equation*}%
where $\Gamma $ is the contraction defined in the proof of Theorem \ref%
{theorem 3}. We then have:%
\begin{equation*}
\lim_{n\rightarrow \infty }\mathbb{E}\big(\sup_{s\in \lbrack 0,T]}\big|%
Y^{n,t,\phi }(s)-Y^{t,\phi }(s)\big|^{2}\big)=0.
\end{equation*}%
Of course, $u_{n}(t,\phi )$ converges to $u(t,\phi ):=\mathbb{E}Y^{t,\phi
}(t),$ for every $t\in \lbrack 0,T]$ and $\phi \in \mathbb{\Lambda }$. This
implies that the nonlinear Feynman--Kac formula $Y^{t,\phi }\left( s\right)
=u(s,X^{t,\phi })$ holds.\hfill $\medskip $
\end{proof}

\begin{remark}
From the above proof it can be seen why we have renounced to the dependence
of the driver on the past of $z.$ More precisely, even if we had a
representation of the form%
\begin{equation*}
Z^{n,t,\phi }\left( s\right) =d^{n}(s,X^{t,\phi })
\end{equation*}%
in Theorem \ref{Feynman-Kac formula_preliminary}, we couldn't use it because
of the lack of continuity of $d^{n}\,.$
\end{remark}

We are now able to prove Theorem \ref{theorem 1}, which shows the existence
of a viscosity solution to equation (\ref{PDKE}).\medskip

\begin{proof}[Proof of Theorem \protect\ref{theorem 1}]

We will prove that function $u$ defined by (\ref{def_u}) is a viscosity
solution to (\ref{PDKE}). In particular we will only show that $u$ is a
viscosity subsolution, the supersolution case being similar.

Suppose by contrary that $u$ is not a viscosity subsolution. Then, for any $%
L_{0}\geq 0$, there exists $L\geq L_{0}$ such that $u$ is not a viscosity $L$%
--subsolution in the sense of Definition \ref{definition 1}. Therefore,
there exist $\left( t,\phi \right) \in \lbrack 0,T]\times \mathbb{\Lambda }$
and $\varphi \in \underline{\mathcal{A}}^{L}u\left( t,\phi \right) $ such
that, for some $c>0,$%
\begin{equation*}
\partial _{t}\varphi (t,\phi )+\mathcal{L}\varphi (t,\phi )+f(t,\phi
,u(t,\phi ),\partial _{x}\varphi \left( t,\phi \right) \sigma (t,\phi
),\left( u(\cdot ,\phi )\right) _{t})\leq -c<0
\end{equation*}%
Using the definition of $\underline{\mathcal{A}}^{L}u\left( t,\phi \right) $
we see that there exists $\tau _{0}\in \mathcal{T}_{+}^{t}$ such that
\begin{equation*}
\varphi \left( t,\phi \right) -u\left( t,\phi \right) =\min\nolimits_{\tau
\in \mathcal{T}^{t}}\underline{\mathcal{E}}_{t}^{L}\left[ \left( \varphi
-u\right) \left( \tau \wedge \tau _{0},X^{t,\phi }\right) \right] \,.
\end{equation*}%
Let us now take%
\begin{equation*}
\begin{array}{l}
\displaystyle\tilde{\tau}:=T\wedge \tau _{0}\wedge \inf \{s>t:\partial
_{t}\varphi (s,X^{t,\phi })+\mathcal{L}\varphi (s,X^{t,\phi })\medskip \\
\displaystyle\quad \quad \quad \quad \quad \quad \quad \quad +f(s,X^{t,\phi
},u(s,X^{t,\phi }),\partial _{x}\varphi (s,X^{t,\phi })\sigma (s,X^{t,\phi
}),(u(\cdot ,X^{t,\phi }))_{s})>-c/2\}.%
\end{array}%
\end{equation*}%
By the continuity of the coefficients, we deduce that $\tilde{\tau}\in
\mathcal{T}_{+}^{t}$ $.$

Let us denote%
\begin{equation*}
\begin{array}{l}
(Y^{1}\left( s\right) ,Z^{1}\left( s\right) ):=(\varphi (s,X^{t,\phi
}),\partial _{x}\varphi (s,X^{t,\phi })\sigma (s,X^{t,\phi })),\quad \text{%
for }s\in \left[ t,T\right] ,\medskip \\
(Y^{2}\left( s\right) ,Z^{2}\left( s\right) ):=(Y^{t,\phi }\left( s\right)
,Z^{t,\phi }\left( s\right) ),\quad \text{for }s\in \left[ 0,T\right]%
\end{array}%
\end{equation*}%
and%
\begin{equation*}
\Delta Y\left( s\right) :=Y^{1}\left( s\right) -Y^{2}\left( s\right) ,\quad
\Delta Z\left( s\right) :=Z^{1}\left( s\right) -Z^{2}\left( s\right) ,\text{
for }s\in \left[ t,T\right] .
\end{equation*}%
Using It\^{o}'s formula we deduce, for any $s\in \left[ t,T\right] ,$%
\begin{equation*}
\displaystyle\varphi (s,X^{t,\phi })=\varphi (t,\phi )+\int_{t}^{s}\big(%
\partial _{t}\varphi (r,X^{t,\phi })+\mathcal{L}\varphi (r,X^{t,\phi })\big)%
dr+\int_{t}^{s}\langle \partial _{x}\varphi (r,X^{t,\phi }),\sigma
(r,X^{t,\phi })dW\left( r\right) \rangle ,
\end{equation*}%
so that we obtain%
\begin{equation*}
\begin{array}{l}
\displaystyle\Delta Y\left( \tilde{\tau}\right) -\Delta Y\left( t\right)
\medskip \\
\displaystyle=\int_{t}^{\tilde{\tau}}\big(\partial _{t}\varphi (r,X^{t,\phi
})+\mathcal{L}\varphi (r,X^{t,\phi })+f(r,X^{t,\phi },Y^{2}(r),Z^{2}\left(
r\right) ,Y_{r}^{2})\big)dr+\int_{t}^{\tilde{\tau}}\Delta Z\left( r\right)
dW\left( r\right) .%
\end{array}%
\end{equation*}%
Since for any $r\in \left[ t,\tilde{\tau}\right] $ we have%
\begin{equation*}
\begin{array}{l}
\displaystyle\partial _{t}\varphi (r,X^{t,\phi })+\mathcal{L}\varphi
(r,X^{t,\phi })+f(r,X^{t,\phi },Y^{2}(r),Z^{2}\left( r\right)
,Y_{r}^{2})\medskip \\
\displaystyle\leq -\frac{c}{2}+f(r,X^{t,\phi },Y^{2}(r),Z^{2}\left( r\right)
,Y_{r}^{2})-f(r,X^{t,\phi },u(r,X^{t,\phi }),\partial _{x}\varphi
(r,X^{t,\phi })\sigma (r,X^{t,\phi }),(u(\cdot ,X^{t,\phi }))_{r}),%
\end{array}%
\end{equation*}%
we deduce, using the Feynman--Kac formula $Y^{2}\left( r\right) =u\left(
r,X^{t,\phi }\right) ,$ that%
\begin{equation*}
\begin{array}{l}
\displaystyle\Delta Y\left( \tilde{\tau}\right) -\Delta Y\left( t\right)
\leq -\frac{c}{2}\left( \tilde{\tau}-t\right) +\int_{t}^{\tilde{\tau}}\Delta
Z\left( r\right) dW\left( r\right) \medskip \\
\displaystyle\quad +\int_{t}^{\tilde{\tau}}\big(f(r,X^{t,\phi
},u(r,X^{t,\phi }),Z^{2}\left( r\right) ,(u(\cdot ,X^{t,\phi
}))_{r}))-f(r,X^{t,\phi },u(r,X^{t,\phi }),Z^{1}\left( r\right) ,(u(\cdot
,X^{t,\phi }))_{r})\big)dr.%
\end{array}%
\end{equation*}%
We have that there exists $\lambda \in \mathcal{U}_{T}^{L}$ such that%
\begin{equation*}
\begin{array}{l}
\displaystyle f(r,X^{t,\phi },u(r,X^{t,\phi }),Z^{1}\left( r\right)
,(u(\cdot ,X^{t,\phi }))_{r}))-f(r,X^{t,\phi },u(r,X^{t,\phi }),Z^{2}\left(
r\right) ,(u(\cdot ,X^{t,\phi }))_{r})\medskip \\
\displaystyle=\left\langle \lambda \left( r\right) ,\Delta Z\left( r\right)
\right\rangle%
\end{array}%
\end{equation*}%
and therefore%
\begin{equation*}
\Delta Y\left( \tilde{\tau}\right) -\Delta Y\left( t\right) \medskip \leq -%
\frac{c}{2}\left( \tilde{\tau}-t\right) +\int_{t}^{\tilde{\tau}}\Delta
Z\left( r\right) \big(dW\left( r\right) -\lambda \left( r\right) dr\big)\,.
\end{equation*}%
Noticing now that $W\left( s\right) -\int_{t}^{s}\lambda \left( r\right) dr$
is a $\mathbb{P}^{t,\lambda }-$martingale, we obtain%
\begin{equation*}
\begin{array}{l}
\displaystyle\Delta Y\left( t\right) =\mathbb{E}^{\mathbb{P}^{t,\lambda
}}(\Delta Y\left( t\right) )\medskip \\
\displaystyle\geq \mathbb{E}^{\mathbb{P}^{t,\lambda }}(\Delta Y\left( \tilde{%
\tau}\right) )+\frac{c}{2}\mathbb{E}^{\mathbb{P}^{t,\lambda }}\left( \tilde{%
\tau}-t\right) +\mathbb{E}^{\mathbb{P}^{t,\lambda }}\int_{t}^{\tilde{\tau}%
}\Delta Z\left( r\right) \big(dW\left( r\right) -\lambda \left( r\right) dr%
\big)\medskip \\
\displaystyle>\mathbb{E}^{\mathbb{P}^{t,\lambda }}(\Delta Y\left( \tilde{\tau%
}\right) )=\mathbb{E}^{\mathbb{P}^{t,\lambda }}(\varphi (\tilde{\tau}%
,X^{t,\phi })-Y^{t,\phi }\left( \tilde{\tau}\right) )=\mathbb{E}^{\mathbb{P}%
^{t,\lambda }}\left[ \left( \varphi -u\right) (\tilde{\tau},X^{t,\phi })%
\right] \medskip \\
\displaystyle\geq \underline{\mathcal{E}}_{t}^{L}\left[ \left( \varphi
-u\right) (\tilde{\tau},X^{t,\phi })\right] =\underline{\mathcal{E}}_{t}^{L}%
\left[ \left( \varphi -u\right) (\tau _{0}\wedge \tilde{\tau},X^{t,\phi })%
\right] \medskip \\
\displaystyle\geq \min\nolimits_{\tau \in \mathcal{T}^{t}}\underline{%
\mathcal{E}}_{t}^{L}\left[ \left( \varphi -u\right) (\tau _{0}\wedge \tilde{%
\tau},X^{t,\phi })\right] =\varphi (t,X^{t,\phi })-Y^{t,\phi }\left(
t\right) =\Delta Y\left( t\right) ,%
\end{array}%
\end{equation*}%
which is a contradiction.

The proof is complete now.\hfill
\end{proof}

\section{Financial applications}

\label{SEC:FA}

In what follows we shall apply theoretical results developed in previous
sections in order to analyze some particular models of great interest in
modern finance.$\medskip $

Financial literature that shows how delay naturally arises when dealing with
asset price evolution or in general with certain financial instruments, is
nowadays wide and developed, see, for instance, \cite%
{ar-hu/07,ch-yo/99,ka-sw-hu/02,ka-sw-hu/07} and references therein. On the
other hand, not much is done when the delay enters the backward component.
We aim here to give some financial applications where also the backward
equation exhibits a delayed behaviour. We remark that since the goal of the
present work is purely theoretical, the examples provided will not be stated
in complete generality. Actually in this section we will show how the study
of BSDEs with delayed generators, together with the associated
path--dependent Kolmogorov equation, may lead to the study of a completely
new class of financial problems that have not been studied before.
Nevertheless, we intend to address in a future work the study of these
problems in complete generality.

BSDEs with delay have been introduced in \cite{de-im/10} as a pure
mathematical tool with no financial application of interest. Later on, some
works showing that the delay in the backward component arises naturally in
several applications have appeared, see, e.g. \cite{de/11,de/12}.

In what follows we will provide two extensions of forward--backward models
that have been proposed in past literature where the backward components can
exhibit a short--time delay.


\subsection{The large investor problem}

Following the model studied in \cite{cv-ma/96}, see also, e.g. \cite%
{ka-pe-qu/01}, we will consider in the current example a non--standard
investor acting on a financial market. We assume that this investor, usually
referred to as \textit{the large investor} in the literature, has superior
information about the stock prices and/or he is willing to invest a large
amount of money in the stock. This fact implies that the large investor may
influence the behaviour of the stock price with his actions. It is further
natural to assume that there is a short time delay between the investor's
actions and the reaction of the market to the large investor's actions. In
particular we assume that the drift coefficient of the underlying $S$ at
time $t$ depends on how the large investor acts on the market in the
interval $(t-\delta ,t)$.

Let us then consider a risky asset $S$ and a riskless bond $B$ evolving
according to
\begin{equation}
\left\{
\begin{array}{l}
\displaystyle\frac{dB(t)}{B(t)}=r(t,X(t),\pi\left( t\right)
,X_{t})dt\;,\quad B(0)=1\;,\medskip \\
\displaystyle\frac{dS(t)}{S(t)}=\mu\left( t,X(t),\pi\left( t\right)
,X_{t}\right) dt+\sigma(t,X(t),X_{t})dW(t),\quad S(0)=s_{0}>0.%
\end{array}
\right.  \label{Large Investor_SDE}
\end{equation}
Here we have denoted by $X$ the portfolio of the large investor. Also we
used the notations introduced by (\ref{definition delayed term 1}) and (\ref%
{definition delayed term 2}). We suppose that the coefficients $r,\mu$ and $%
\sigma$ satisfy some suitable regularity assumptions.

We have that the portfolio $X$, composed at any time $t\in \lbrack 0,T]$ by $%
\pi (t),$ the amount invested in the risky asset $S$ and by $X(t)-\pi (t),$
the amount invested in the riskless bond $B$, evolves according to%
\begin{equation*}
\begin{array}{l}
\displaystyle dX(t)=\frac{\pi (t)}{S\left( t\right) }dS\left( t\right) +%
\frac{X\left( t\right) -\pi (t)}{B\left( t\right) }dB\left( t\right) \medskip
\\
\displaystyle=\pi (t)\cdot \left[ \mu \left( t,X(t),\pi \left( t\right)
,X_{t}\right) dt+\sigma (t,X(t),X_{t})dW(t)\right] +\left[ X\left( t\right)
-\pi (t)\right] \cdot r(t,X(t),\pi \left( t\right) ,X_{t})dt\,,%
\end{array}%
\end{equation*}%
with the final condition $X(T)=h\left( S\right) .$

Hence, for $t\in \left[ 0,T\right] ,$%
\begin{equation}
\displaystyle X(t)=h\left( S\right) +\int_{t}^{T}F\left( s,X\left( s\right)
,\pi \left( s\right) ,X_{s},\pi _{s}\right) ds-\int_{t}^{T}\pi (s)\sigma
(s,X(s),X_{s})dW(s),  \label{Large Investor_BSDE}
\end{equation}%
where we have denoted for short%
\begin{equation}
\displaystyle F\left( s,X(s),\pi \left( s\right) ,X_{s},\pi _{s}\right) :=-%
\left[ X(s)-\pi (s)\right] \cdot r(s,X(s),\pi \left( s\right) ,X_{s})-\pi
(s)\cdot \mu \left( s,X(s),\pi \left( s\right) ,X_{s}\right) .
\label{definition F}
\end{equation}%
Since the forward equations in (\ref{Large Investor_SDE}) can be explicitly
solved by%
\begin{equation*}
\displaystyle S\left( t\right) =s_{0}\exp \Big[\int_{0}^{t}\left( \mu \left(
s,X(s),\pi \left( s\right) ,X_{s}\right) -\frac{1}{2}\sigma ^{2}\left(
s,X(s),X_{s}\right) \right) ds+\int_{0}^{t}\sigma \left( s,X(s),X_{s}\right)
dW\left( s\right) \Big],
\end{equation*}%
we deduce that $S$ is a functional of $X,\pi $ and $W$, i.e. there exists $%
\tilde{h}$ such that the final condition becomes $X\left( T\right) =\tilde{h}%
\left( W,X,\pi \right) $.

A first remark is that we can impose some suitable assumptions on the
functions $r,\mu$ and $\sigma$ such that the function $\bar{F}:[0,T]\times
\mathbb{R}\times\mathbb{R}\times L^{2}\left( [-\delta,0];\mathbb{R}\right)
\rightarrow\mathbb{R},$ defined by%
\begin{equation*}
\bar{F}\left( s,y,z,\hat{y}\right) :=-\left( y-z\sigma^{-1}\left( s,y,\hat{y}%
\right) \right) \cdot r(s,y,z\sigma^{-1}\left( s,y,\hat {y}\right) ,\hat{y}%
)-z\cdot\mu\left( s,y,z\sigma^{-1}\left( s,y,\hat {y}\right) ,\hat{y}\right)
,
\end{equation*}
satisfies assumptions $\mathrm{(A}_{3}\mathrm{)}$--$\mathrm{(A}_{5}\mathrm{).%
}$

The second remark concerns the fact that Theorem \ref{theorem 3} is still
true, with a slight adjustment of the proof, if we consider in the backward
equation (\ref{FBSDE 2}) the final condition
\begin{equation*}
\bar{h}\left( W,X,Z\right) :=\tilde{h}(W,X,\sigma ^{-1}\left( \cdot
,X,X_{\cdot }\right) Z)\,,
\end{equation*}%
instead of a functional of $W$ only (which represents the forward part
within the theoretical framework from Section \ref{Section 2}), with $\bar{h}
$ satisfying a Lipschitz condition:%
\begin{equation*}
\left\vert \bar{h}\left( x,y,z\right) -\bar{h}\left( x,y^{\prime },z^{\prime
}\right) \right\vert ^{2}\leq L\left[ \int_{0}^{T}\left\vert y\left(
s\right) -y^{\prime }\left( s\right) \right\vert
^{2}ds+\int_{0}^{T}\left\vert z\left( s\right) -z^{\prime }\left( s\right)
\right\vert ^{2}ds\right] .
\end{equation*}%
Of course, $\bar{h}$ will satisfy $\mathrm{(A}_{5}\mathrm{)}$ and the above
condition if we impose some suitable assumptions on $\mu $ and $\sigma $
(for instance, if we consider $\mu $ bounded and $\sigma $ constant).

Therefore we can rewrite (\ref{Large Investor_BSDE}) as%
\begin{equation}
X(t)=\bar{h}\left( W,X,Z\right) +\int_{t}^{T}\bar{F}\left( s,X\left(
s\right) ,Z\left( s\right) ,X_{s}\right) ds-\int_{t}^{T}Z\left( s\right)
dW(s),\quad t\in \left[ 0,T\right]  \label{EQN:LargInvRe}
\end{equation}%
and we deduce from Theorem \ref{theorem 3} that, under proper assumptions on
the coefficients, there exists a unique solution $\left( X,Z\right) $ to
equation \eqref{EQN:LargInvRe}.

Hence equation (\ref{Large Investor_BSDE}) admits a unique solution $\left(
X,\pi\right) $, where
\begin{equation*}
\pi\left( s\right) :=Z\left( s\right) \sigma^{-1}(s,X(s),X_{s})\, .
\end{equation*}

In order to obtain the connection with the PDE we consider first the
decoupled forward--backward stochastic system:%
\begin{equation*}
\left\{
\begin{array}{l}
\displaystyle\bar{W}^{t,\phi }\left( s\right) =\phi \left( t\right)
+\int_{t}^{s}dW\left( r\right) ,\quad s\in \left[ t,T\right] ,\medskip \\
\displaystyle\bar{W}^{t,\phi }\left( s\right) =\phi \left( s\right) ,\quad
s\in \lbrack 0,t),\medskip \\
\displaystyle X^{t,\phi }\left( s\right) =\bar{h}(\bar{W}^{t,\phi
},X^{t,\phi },Z^{t,\phi })+\int_{s}^{T}\bar{F}(r,X^{t,\phi }\left( r\right)
,Z^{t,\phi }\left( r\right) ,X_{r}^{t,\phi })dr\medskip \\
\displaystyle\quad \quad \quad \quad \quad \quad \quad \quad \quad \quad
\quad \quad \quad \quad \quad -\int_{s}^{T}Z^{t,\phi }(r)dW\left( r\right)
,~s\in \left[ t,T\right] ,\medskip \\
\displaystyle X^{t,\phi }\left( s\right) =X^{s,\phi }\left( s\right) ,\quad
\pi ^{t,\phi }\left( s\right) =0,\quad s\in \lbrack 0,t).%
\end{array}%
\right.
\end{equation*}%
Using Theorem \ref{theorem 3} we see that, under suitable assumptions on the
coefficients, there exists a unique solution $\left( X^{t,\phi },\pi
^{t,\phi }\right) _{\left( t,\phi \right) \in \left[ 0,T\right] \times
\mathbb{\Lambda }}$ of the above system.

From the results of the previous sections, in particular from theorem \ref%
{Feynman-Kac formula}, we have the following representation for the solution
$(X,Z)$ of the backward component in the above system. In particular we
have, for every $\left( t,\phi \right) \in \left[ 0,T\right] \times \mathbb{%
\Lambda },$%
\begin{equation*}
X^{t,\phi }\left( s\right) =u(s,\bar{W}^{t,\phi }),\quad \text{for all }s\in %
\left[ t,T\right] ,
\end{equation*}%
where $u\left( t,\phi \right) =X^{t,\phi }\left( t\right) $ is a viscosity
solution of the following path--dependent PDE:%
\begin{equation*}
\left\{
\begin{array}{l}
\displaystyle\partial _{t}u(t,\phi )+\frac{1}{2}\partial _{xx}^{2}u(t,\phi )+%
\bar{F}(t,u(t,\phi ),\partial _{x}u\left( t,\phi \right) ,\left( u\left(
\cdot ,\phi \right) \right) _{t})=0,\quad \phi \in \mathbb{\Lambda },\;t\in
\lbrack 0,T),\medskip \\
\displaystyle u(T,\phi )=\bar{h}(\phi ,\left( u\left( \cdot ,\phi \right)
\right) ),\quad \phi \in \mathbb{\Lambda }.%
\end{array}%
\right.
\end{equation*}%
%
%
%
%
%
%
%
%
%
%
%
%
%
%
%
%
%
%
%
%
%
%
%
%
%
%
%
%
%
%

An example of this type has been developed first in \cite{cv-ma/96} and then
treated by many authors, see, e.g. \cite{ka-pe-qu/01}, where they considered
a case where the drift and the diffusion component of the price equation
depend both on the wealth process $X$ and the underlying process $S$. This
would lead to a fully coupled forward--backward system which does not fit in
our setting. On the other hand in our model we have assumed that the drift $%
\mu$ and the interest rate $r$ may be influenced also by past values of the
wealth process $X$, whereas in cited papers no delay in the backward
component is assumed. %

\subsection{Risk measures via g-expectations}

A key problem in financial mathematics is the risk management of an
investment. Such a problem has been widely studied in finance since the
introductory paper \cite{ar-de-eb/02} where the notion of \textit{risk
measure} has been first introduced. Since then, several empirical studies
concerning the key task of risk-management have been conducted, showing in
particular that the best way to quantify the risk of a given financial
position should be a \textit{dynamic risk measure}, rather than a classic
static one. Starting from this fact, the notion of \textit{g-expectation}
has been first introduced in \cite{pe/97}, as a fundamental mathematical
tool when dealing with \textit{dynamic risk measures}; we refer also to \cite%
{pe/04,ro/06} for a comprehensive and exhaustive introduction to \textit{%
dynamic risk measures}.

The main purpose of a risk measure is to quantify in a single number the
riskiness of a given financial position. The next one is the mathematical
formulation of the notion of \textit{risk measure}, see, e.g. \cite[%
Definition 13.1.1]{de/13}.

\begin{definition}
A family $\left( \rho _{t}\right) _{t\in \lbrack 0,T]}$ of mappings $\rho
_{t}:L^{2}(\Omega ,\mathcal{F}_{T})\rightarrow L^{2}(\Omega ,\mathcal{F}%
_{t}) $, such that $\rho _{T}(\xi )=-\xi $ is called a \textit{dynamic risk
measure}.
\end{definition}

From a practical point of view, if we denote by $\xi $ the terminal value of
a given financial position, $\rho _{t}(\xi )$ quantifies the risk the
investor takes in the position $\xi $ at terminal time $T$. Clearly, in
order to have a concrete financial use, a risk measure has to satisfy a set
of properties, usually referred to as \textit{axioms of risk measures}; we
refer to \cite{de/13} to a complete list of the aforementioned axioms.

From a mathematical point of view, it has been shown that BSDEs and the
related for\-ward--backward system are a perfect tool to tackle the problem
of risk management. In particular, one possible way to define a \textit{%
dynamic risk measure} is to specify the generator $g$ of the driving BSDE,
from here the name $g-$expectation, where the generator $g$ determines the
properties of the \textit{dynamic risk measure}. A direct approach to $g$%
-expectation is therefore to introduce a BSDE of the form
\begin{equation}
Y(t)=\xi +\int_{t}^{T}g\left( s,Y(s),Z(s)\right) ds-\int_{t}^{T}Z(s)dW(s)\,,
\label{BSDE 1_application}
\end{equation}%
where the generator $g$ is called the generator of a $g-$expectation. In
this sense we have
\begin{equation*}
\rho _{t}(\xi )=\mathbb{E}^{g}\left[ \left. \xi \right\vert \mathcal{F}_{t}%
\right] =Y(t)\,,
\end{equation*}%
where we have used the subscript $g$ to emphasize the role played by the
generator $g$. Heuristically speaking we have the relation
\begin{equation*}
\mathbb{E}^{g}\left[ \left. dY(t)\right\vert \mathcal{F}_{t}\right]
=-g\left( t,Y(t),Z(t)\right) dt\,,\quad 0\leq t\leq T\,,
\end{equation*}%
so that intuitively the coefficient $g$ reflects the agent's belief on the
expected change of risk.

Once we have chosen a risk measure $g$, such that it is financially
reasonable, we then solve the BSDE \eqref{BSDE 1_application} endowed with a
suitable final condition which represents the investor's wealth at terminal
time $T$, we refer to \cite{ba-ka/07, ro/06} for a detailed introduction to
the usage of BSDEs as \textit{dynamic risk measures}.

In the literature it has always been considered a generator $g$ that depends
only on the present value at time $t$ of the risk measure $Y(t)$ and its
variability $Z(t)$, but, as pointed out in \cite{de/12}, if we want to model
an investor preference we cannot leave aside the memory effect, that is it
is reasonable to assume that an investor makes his choices based also on
what happened on the past. In \cite{de/12} the author proposed to consider a
$g-$expectation which incorporates a disappointment effect through a BSDE of
moving average. In fact in the just mentioned work the author suggests that
when dealing with the investor's preferences, to consider Markovian systems
is restrictive since it is natural that an investor takes into account the
past history of a given investment when he is to make some choice. We refer
to \cite{de/12} for a short but exhaustive review of different economical
studies of how memory effect cannot be neglected when dealing with an
investor's choice. Regarding the case considered in \cite{de/12}, we make
the assumption that the investor has a \textit{short memory}, that is, in
making his choices he considers only what has happened in the recent past.

Let us take two bounded and Lipschitz functions $g_{1},g_{2}$ such that $%
g_{2}\left( 0\right) =0.$ We consider a $g-$expectation of the form $%
g(s,y,z)=\beta \,g_{1}\left( \bar{y}\right) \,g_{2}\left( z\right) $, where $%
\bar{y}$ is the time--average in a sufficiently small time interval and $%
\beta \in \mathbb{R}$ a given financial parameter (and we remark that, in
this case, the assumption $g(s,y,0)=0$ is satisfied). Hence we will deal
with a BSDE with delayed generator of the form
\begin{equation}
Y(t)=\xi +\frac{\beta }{\delta }\int_{t}^{T}g_{1}\Big(\int_{-\delta
}^{0}Y(s+r)dr\Big)\,g_{2}\left( Z\left( s\right) \right)
ds-\int_{t}^{T}Z(s)dW(s)\,,  \label{BSDE 2_application}
\end{equation}%
with $\xi $ the terminal payoff of the investment to be introduced soon and $%
\delta $ small enough such that inequality \eqref{condition_KT} is
well-posed.

Let us now assume that the financial market is composed by one risky asset $%
S $ and one riskless bond $B$. The generalization to $d$ risky assets can be
easily derived from the present case. We assume, in a complete generality,
that both the bond and the asset may exhibit delay. For the case of delay in
the forward component a more established theory exists, with existence and
uniqueness, as well as regularity results, see, e.g. \cite%
{ar-hu/07,fu-te/05, fu-te/10}.

We consider in what follows the delayed market model introduced in \cite%
{ch-yo/99}. In what concerns the stock price we assume that $S$ evolves
according to the following stochastic delay differential equation:%
\begin{equation}
\frac{dS(t)}{S(t)}=\mu (t,S)dt+\sigma (t,S)dW(t)\,,  \label{risky asset}
\end{equation}%
with $S_{0}=s_{0}\in \mathbb{R}\,,$ where $\mu ,\sigma :[0,T]\times \mathbb{%
\Lambda }\rightarrow \mathbb{R}$ are some given functions, where the
notation is introduced in Section \ref{Section 1}.

Let us assume that $\mu$ and $\sigma$ satisfy assumptions of type $\mathrm{(A%
}_{1}\mathrm{)}$--$\mathrm{(A}_{2}\mathrm{)}$, so that there exists a unique
solution of equation (\ref{risky asset}) (this is a consequence of Theorem %
\ref{theorem 0}).\medskip

Also we assume the investor subscribes a claim with terminal payoff $h:%
\mathbb{\Lambda }\rightarrow \mathbb{R}$ so that the BSDE
\eqref{BSDE
2_application} becomes
\begin{equation}
Y(t)=h\left( S_{T}\right) +\frac{\beta }{\delta }\int_{t}^{T}g_{1}\Big(%
\int_{-\delta }^{0}Y(s+r)dr\Big)\,g_{2}\left( Z\left( s\right) \right)
ds-\int_{t}^{T}Z(s)dW(s)\,,  \label{BSDE 2a_application}
\end{equation}%
where we assume $h$ to satisfy $\mathrm{(A}_{7}\mathrm{)}-(iv)$; also, let
us stress that the generator $g:L^{2}([-\delta ,0];\mathbb{R})\rightarrow
\mathbb{R}$ defined above satisfies assumptions $\mathrm{(A}_{6}\mathrm{),(A}%
_{7}\mathrm{)}$, see, e.g. Remark \ref{REM:DelGen}. We are naturally led to
consider the following forward-backward system with delay
\begin{equation}
\left\{
\begin{array}{l}
\displaystyle S^{t,\phi }(s)=\phi (t)+\int_{t}^{s}S^{t,\phi }(r)\mu
(t,S^{t,\phi })dr+\int_{t}^{s}S^{t,\phi }(r)\sigma (t,S^{t,\phi
})dW(r)\,,\quad s\in \lbrack t,T]\,,\medskip \\
\displaystyle S^{t,\phi }(s)=\phi (s)\,,\quad s\in \lbrack 0,t)\,,\medskip
\\
\displaystyle Y(s)=h(S^{t,\phi })+\frac{\beta }{\delta }\int_{s}^{T}g_{1}%
\Big(\int_{-\delta }^{0}Y^{t,\phi }(r+\theta )d\theta \Big)\,g_{2}(Z^{t,\phi
}\left( r\right) )\,dr\medskip \\
\displaystyle\quad \quad \quad \quad \quad \quad \quad \quad \quad \quad
\quad \quad \quad \quad \quad \quad \quad \quad -\int_{s}^{T}Z^{t,\phi
}(r)dW(r)\,,\quad s\in \lbrack t,T]\,,\medskip \\
\displaystyle Y^{t,\phi }(s)=Y^{s,\phi }(s)\,,\quad s\in \lbrack 0,t)\,,%
\end{array}%
\right.  \label{BSDE 3_application}
\end{equation}%
and by theorems \ref{theorem 0}--\ref{theorem 3}, the forward-backward
system \eqref{BSDE 3_application} admits a unique solution.

Let us also stress that the great majority of possible claims that can be
considered in finance satisfies the above assumptions on the terminal payoff
$h$; also we allow the option to possibly be \textit{path--dependent}, that
is its terminal value at time $T$ depends explicitly on past values assumed
by the asset $S$.

From the results of the previous sections, we have the following
characterization for the FBSDE \eqref{BSDE 3_application}. In particular we
obtain theorem \ref{Feynman-Kac formula} holds, so that, for every $\left(
t,\phi \right) \in \left[ 0,T\right] \times \mathbb{\Lambda },$%
\begin{equation*}
Y^{t,\phi }\left( s\right) =u(s,S^{t,\phi }),\quad \text{for all }s\in \left[
t,T\right] ,
\end{equation*}%
where $u\left( t,\phi \right) =Y^{t,\phi }\left( t\right) $ is a viscosity
solution of the following path--dependent PDE:%
\begin{equation*}
\left\{
\begin{array}{l}
\displaystyle\partial _{t}u(t,\phi )+\frac{1}{2}\,\phi ^{2}\,\sigma
^{2}(t,\phi )\,\partial _{xx}^{2}u(t,\phi )+\phi \,\mu (t,\phi )\,\partial
_{x}u(t,\phi )\medskip \\
\displaystyle\quad +\frac{\beta }{\delta }\,g_{1}\Big(\int_{-\delta
}^{0}u(t+r,\phi )dr\Big)\,g_{2}\big(\phi \,\sigma (t,\phi )\,\partial
_{x}u(t,\phi )\big)=0,\quad \phi \in \mathbb{\Lambda },\;t\in \lbrack
0,T),\medskip \\
\displaystyle u(T,\phi )=h\big(\phi \big),\quad \phi \in \mathbb{\Lambda }.%
\end{array}%
\right.
\end{equation*}

\section{Appendix: Proofs of Theorem \protect\ref{theorem 3} and Theorem
\protect\ref{theorem 2}}

\begin{proof}[Proof of Theorem \protect\ref{theorem 3}]
The existence and uniqueness will be obtained by the Banach fixed point
theorem. Let $\phi \in \mathbb{\Lambda }$ be arbitrarily fixed and let us
consider the map $\Gamma $ defined on $\mathcal{A}\times \mathcal{B}$, with $%
\mathcal{A}:=\mathcal{C}\big(\left[ 0,T\right] ;\mathcal{S}_{0}^{2,m}\big)$
and $\mathcal{B}:=\mathcal{C}\big(\left[ 0,T\right] ;\mathcal{H}%
_{0}^{2,m\times d^{\prime }}\big)$, in the following way: for $\left(
U,V\right) \in \mathcal{A}\times \mathcal{B}$, $\Gamma \left( U,V\right)
=\left( Y,Z\right) $, where for $t\in \lbrack 0,T]$, the couple of adapted
processes $\left( Y^{t}\left( s\right) ,Z^{t}\left( s\right) \right) _{s\in %
\left[ t,T\right] }$ is the unique solution of the BSDE%
\begin{equation}
Y^{t}\left( s\right) =h(X^{t,\phi })+\int_{s}^{T}F(r,X^{t,\phi },Y^{t}\left(
r\right) ,Z^{t}\left( r\right)
,U_{r}^{t},V_{r}^{t})dr-\int_{s}^{T}Z^{t}\left( r\right) dW\left( r\right)
\,,\quad s\in \lbrack t,T].  \label{BSDE iterative 1}
\end{equation}%
Now we prolong the solution by taking $Y^{t}\left( s\right) :=Y^{s}\left(
s\right) $ and $Z^{t}\left( s\right) :=0\,,\quad $for $s\in \lbrack 0,t)\,.$

\noindent\textbf{\textit{Step I.}}

Let us first show that $\Gamma$ takes values in the Banach space $\mathcal{A}%
\times\mathcal{B}$. For that, let us take $\left( U,V\right) \in\mathcal{A}%
\times\mathcal{B}$; we will prove that $(Y,Z):=\Gamma\left( U,V\right) \in%
\mathcal{A}\times\mathcal{B}$, i.e. for every $t\in\lbrack0,T]$ we have%
\begin{equation}
Y^{t}\in\mathcal{S}_{t}^{2,m}\subseteq\mathcal{S}_{0}^{2,m},\quad Z^{t}\in%
\mathcal{H}_{t}^{2,m\times d^{\prime}}\subseteq\mathcal{H}_{0}^{2,m\times
d^{\prime}}  \label{Gamma well defined 1}
\end{equation}
and the applications%
\begin{equation}
\begin{array}{l}
\left[ 0,T\right] \ni t\mapsto Y^{t}\in\mathcal{S}_{0}^{2,m},\medskip \\
\left[ 0,T\right] \ni t\mapsto Z^{t}\in\mathcal{H}_{0}^{2,m\times d^{\prime
}}%
\end{array}
\label{Gamma well defined 2}
\end{equation}
are continuous.

Let $t\in\left[ 0,T\right] $ be fixed and $t^{\prime}\in\lbrack0,T]$; with
no loss of generality, we will suppose that $t<t^{\prime}$ and $t^{\prime
}-t<\delta$.

We have, using (\ref{BSDE iterative 1}),%
\begin{equation*}
\begin{array}{l}
\mathbb{E}\big(\sup_{s\in \left[ 0,T\right] }|Y^{t}\left( s\right)
-Y^{t^{\prime }}\left( s\right) |^{2}\big)\medskip \\
\leq \mathbb{E}\big(\sup_{s\in \left[ 0,t^{\prime }\right] }|Y^{t}\left(
s\right) -Y^{t^{\prime }}\left( s\right) |^{2}\big)+\mathbb{E}\big(%
\sup_{s\in \left[ t^{\prime },T\right] }|Y^{t}\left( s\right) -Y^{t^{\prime
}}\left( s\right) |^{2}\big)\medskip \\
\leq 2\mathbb{E}\big(\sup_{s\in \left[ t,t^{\prime }\right] }|Y^{t}\left(
s\right) -Y^{t}\left( t\right) |^{2}\big)+2\mathbb{E}\big(\sup_{s\in \left[
t,t^{\prime }\right] }|Y^{t}\left( t\right) -Y^{s}\left( s\right) |^{2}\big)%
\medskip \\
\quad +\mathbb{E}\big(\sup_{s\in \left[ t^{\prime },T\right] }|Y^{t}\left(
s\right) -Y^{t^{\prime }}\left( s\right) |^{2}\big).%
\end{array}%
\end{equation*}

From the continuity of the solution of equation (\ref{BSDE iterative 1})
with respect to time, we have
\begin{equation*}
\mathbb{E}\big(\sup_{s\in \left[ t,t^{\prime }\right] }|Y^{t}\left( s\right)
-Y^{t}\left( t\right) |^{2}\big)\rightarrow 0\,,
\end{equation*}%
as $t^{\prime }\rightarrow t$.

Concerning the term $\mathbb{E}\big(\sup_{s\in \left[ t^{\prime },T\right]
}|Y^{t}\left( s\right) -Y^{t^{\prime }}\left( s\right) |^{2}\big)$ let us
denote for short, only throughout this step,%
\begin{equation*}
\begin{array}{l}
\Delta Y\left( r\right) :=Y^{t}\left( r\right) -Y^{t^{\prime }}\left(
r\right) ,\quad \Delta Z\left( r\right) :=Z^{t}\left( r\right) -Z^{t^{\prime
}}\left( r\right) \medskip \\
\Delta U\left( r\right) :=U^{t}\left( r\right) -U^{t^{\prime }}\left(
r\right) ,\quad \Delta V\left( r\right) :=V^{t}\left( r\right) -V^{t^{\prime
}}\left( r\right)%
\end{array}%
\end{equation*}%
and%
\begin{equation*}
\begin{array}{l}
\Delta h:=h(X^{t,\phi })-h(X^{t^{\prime },\phi }),\medskip \\
\Delta F\left( r\right) :=F(r,X^{t,\phi },Y^{t}\left( r\right) ,Z^{t}\left(
r\right) ,U_{r}^{t},V_{r}^{t})-F(r,X^{t^{\prime },\phi },Y^{t}\left(
r\right) ,Z^{t}\left( r\right) ,U_{r}^{t},V_{r}^{t}).%
\end{array}%
\end{equation*}%
Exploiting It\^{o}'s formula we have, for any $\beta >0$ and any $s\in \left[
t^{\prime },T\right] ,$%
\begin{equation*}
\begin{array}{l}
\displaystyle e^{\beta s}|\Delta Y\left( s\right) |^{2}+\beta
\int_{s}^{T}e^{\beta r}|\Delta Y\left( r\right) |^{2}dr+\int_{s}^{T}e^{\beta
r}|\Delta Z\left( r\right) |^{2}dr\medskip \\
\displaystyle=e^{\beta T}|\Delta Y\left( T\right)
|^{2}-2\int_{s}^{T}e^{\beta r}\langle \Delta Y\left( r\right) ,\Delta
Z\left( r\right) \rangle dW\left( r\right) \medskip \\
\displaystyle\;\;+2\int_{s}^{T}e^{\beta r}\langle \Delta Y\left( r\right)
,F(r,X^{t,\phi },Y^{t}\left( r\right) ,Z^{t}\left( r\right)
,U_{r}^{t},V_{r}^{t})-F(r,X^{t^{\prime },\phi },Y^{t^{\prime }}\left(
r\right) ,Z^{t^{\prime }}\left( r\right) ,U_{r}^{t^{\prime
}},V_{r}^{t^{\prime }})\rangle dr\,.%
\end{array}%
\end{equation*}%
From assumptions $\mathrm{(A}_{3}\mathrm{)}$--$\mathrm{(A}_{5}\mathrm{)}$,
and noting that it holds%
\begin{equation*}
\begin{array}{l}
\displaystyle\int_{s}^{T}e^{\beta r}\big(\int_{-\delta }^{0}\left( |\Delta
U\left( r+\theta \right) |^{2}+|\Delta V\left( r+\theta \right) |^{2}\right)
\alpha (d\theta )\big)dr\medskip \\
\displaystyle=\int_{-\delta }^{0}\left[ \int_{s}^{T}e^{\beta r}\left(
|\Delta U\left( r+\theta \right) |^{2}+|\Delta V\left( r+\theta \right)
|^{2}\right) dr\right] \alpha (d\theta )\medskip \\
\displaystyle=\int_{-\delta }^{0}\big(\int_{s+\theta }^{T+\theta }e^{\beta
(r-\theta )}\left( |\Delta U\left( r\right) |^{2}+|\Delta V\left( r\right)
|^{2}\right) dr\big)\alpha (d\theta )\medskip \\
\displaystyle\leq e^{\beta \delta }\cdot \int_{-\delta }^{0}\alpha (d\theta
)\cdot \int_{0}^{T}e^{\beta r}\big(|\Delta U\left( r\right) |^{2}+|\Delta
V\left( r\right) |^{2}\big)dr\medskip \\
\displaystyle\leq Te^{\beta \delta }\sup_{r\in \left[ 0,T\right] }\big(%
e^{\beta r}|\Delta U\left( r\right) |^{2}\big)+e^{\beta \delta
}\int_{0}^{T}e^{\beta r}|\Delta V\left( r\right) |^{2}dr\,,%
\end{array}%
\end{equation*}
we have for any $a>0,$%
\begin{equation*}
\begin{array}{l}
\displaystyle2\int_{s}^{T}e^{\beta r}\langle \Delta Y\left( r\right)
,F(r,X^{t,\phi },Y^{t}\left( r\right) ,Z^{t}\left( r\right)
,U_{r}^{t},V_{r}^{t})-F(r,X^{t^{\prime },\phi },Y^{t^{\prime }}\left(
r\right) ,Z^{t^{\prime }}\left( r\right) ,U_{r}^{t^{\prime
}},V_{r}^{t^{\prime }})\rangle dr\medskip \\
\displaystyle\leq a\int_{s}^{T}e^{\beta r}|\Delta Y\left( r\right) |^{2}dr+%
\frac{3}{a}\int_{s}^{T}e^{\beta r}|\Delta F\left( r\right) |^{2}dr+\frac{%
6L^{2}}{a}\int_{s}^{T}e^{\beta r}\left( |\Delta Y\left( r\right)
|^{2}+|\Delta Z\left( r\right) |^{2}\right) dr\medskip \\
\displaystyle\quad +\frac{3TKe^{\beta \delta }}{a}\sup_{r\in \left[ 0,T%
\right] }\big(e^{\beta r}|\Delta U\left( r\right) |^{2}\big)+\frac{%
3Ke^{\beta \delta }}{a}\int_{0}^{T}e^{\beta r}|\Delta V\left( r\right)
|^{2}dr\,.%
\end{array}%
\end{equation*}

Therefore we have
\begin{equation*}
\begin{array}{l}
\displaystyle e^{\beta s}|\Delta Y\left( s\right) |^{2}+\left( \beta -a-%
\frac{6L^{2}}{a}\right) \int_{s}^{T}e^{\beta r}|\Delta Y\left( r\right)
|^{2}dr+\left( 1-\frac{6L^{2}}{a}\right) \int_{s}^{T}e^{\beta r}|\Delta
Z\left( r\right) |^{2}dr\medskip \\
\displaystyle\leq e^{\beta T}|\Delta Y\left( T\right) |^{2}+\frac{3}{a}%
\int_{s}^{T}e^{\beta r}|\Delta F\left( r\right)
|^{2}dr-2\int_{s}^{T}e^{\beta r}\langle \Delta Y\left( r\right) ,\Delta
Z\left( r\right) \rangle dW\left( r\right) \medskip \\
\displaystyle\quad +\frac{3TKe^{\beta \delta }}{a}\sup_{r\in \left[ 0,T%
\right] }e^{\beta r}|\Delta U\left( r\right) |^{2}+\frac{3Ke^{\beta \delta }%
}{a}\int_{0}^{T}e^{\beta r}|\Delta V\left( r\right) |^{2}dr.%
\end{array}%
\end{equation*}%
We now choose $\beta ,a>0$ such that%
\begin{equation}
a+\frac{6L^{2}}{a}<\beta \quad \text{and}\quad \frac{6L^{2}}{a}<1,
\label{restriction_parameters}
\end{equation}%
hence we obtain%
\begin{equation}
\begin{array}{l}
\displaystyle\left( 1-\frac{6L^{2}}{a}\right) \mathbb{E}\int_{s}^{T}e^{\beta
r}|\Delta Z\left( r\right) |^{2}dr\leq \mathbb{E}\big(e^{\beta T}|\Delta
h|^{2}\big)+\frac{3}{a}\mathbb{E}\int_{s}^{T}e^{\beta r}|\Delta F\left(
r\right) |^{2}dr\medskip \\
\displaystyle\quad +\frac{3TKe^{\beta \delta }}{a}\mathbb{E}\big(\sup_{r\in %
\left[ 0,T\right] }e^{\beta r}|\Delta U\left( r\right) |^{2}\big)+\frac{%
3Ke^{\beta \delta }}{a}\mathbb{E}\int_{0}^{T}e^{\beta r}|\Delta V\left(
r\right) |^{2}dr.%
\end{array}
\label{technical ineq 14}
\end{equation}%
and, exploiting Burkholder--Davis--Gundy's inequality, we have%
\begin{equation*}
\begin{array}{l}
\displaystyle2\mathbb{E}\big(\sup_{s\in \left[ t^{\prime },T\right] }\Big|%
\int_{s}^{T}e^{\beta r}\langle \Delta Y\left( r\right) ,\Delta Z\left(
r\right) \rangle dW\left( r\right) \Big|\big)\medskip \\
\displaystyle\leq \frac{1}{4}\mathbb{E}\big(\sup_{s\in \left[ t^{\prime },T%
\right] }e^{\beta s}|\Delta Y\left( s\right) |^{2}\big)+144\mathbb{E}%
\int_{t^{\prime }}^{T}e^{\beta r}|\Delta Z\left( r\right) |^{2}dr.%
\end{array}%
\end{equation*}%
which immediately implies
\begin{equation*}
\begin{array}{l}
\displaystyle\frac{3}{4}\mathbb{E}\big(\sup_{s\in \left[ t^{\prime },T\right]
}e^{\beta s}|\Delta Y\left( s\right) |^{2}\big)\leq \mathbb{E}\big(e^{\beta
T}|\Delta h|^{2}\big)+\frac{3}{a}\mathbb{E}\int_{t^{\prime }}^{T}e^{\beta
r}|\Delta F\left( r\right) |^{2}dr\medskip \\
\displaystyle\quad +\frac{3TKe^{\beta \delta }}{a}\mathbb{E}\big(\sup_{r\in %
\left[ 0,T\right] }e^{\beta r}|\Delta U\left( r\right) |^{2}\big)+\frac{%
3Ke^{\beta \delta }}{a}\mathbb{E}\int_{0}^{T}e^{\beta r}|\Delta V\left(
r\right) |^{2}dr+144\mathbb{E}\int_{t^{\prime }}^{T}|\Delta Z\left( r\right)
|^{2}dr.%
\end{array}%
\end{equation*}%
Hence, we have%
\begin{equation}
\begin{array}{l}
\displaystyle\frac{3}{4}\mathbb{E}\big(\sup_{s\in \left[ t^{\prime },T\right]
}e^{\beta s}|\Delta Y\left( s\right) |^{2}\big)\leq \mathbb{E}\big(e^{\beta
T}|\Delta h|^{2}\big)+\frac{3}{a}C_{1}\mathbb{E}\int_{t^{\prime
}}^{T}e^{\beta r}|\Delta F\left( r\right) |^{2}dr\medskip \\
\displaystyle\quad +\frac{3TKe^{\beta \delta }C_{1}}{a}\,\mathbb{E}\big(%
\sup_{r\in \left[ 0,T\right] }e^{\beta r}|\Delta U\left( r\right) |^{2}\big)%
\medskip \\
\displaystyle\quad +\frac{3Ke^{\beta \delta }C_{1}}{a}\,\mathbb{E}%
\int_{0}^{T}e^{\beta r}|\Delta V\left( r\right) |^{2}dr,%
\end{array}
\label{technical ineq 15}
\end{equation}%
where%
\begin{equation*}
C_{1}:=1+\frac{144}{1-6L^{2}/a}\,.
\end{equation*}%
Exploiting thus assumptions $\mathrm{(A}_{3}\mathrm{)}$ and $\mathrm{(A}_{5}%
\mathrm{)}$ together with the fact that $X^{\cdot ,\phi }$ is continuous and
bounded, we have%
\begin{equation*}
C_{1}\mathbb{E}\big(e^{\beta T}|\Delta h|^{2}\big)+\frac{3}{a}C_{1}\mathbb{E}%
\int_{t^{\prime }}^{T}e^{\beta r}|\Delta F\left( r\right) |^{2}dr\rightarrow
0\quad \text{as }t^{\prime }\rightarrow t.
\end{equation*}%
Since $\left( U,V\right) \in \mathcal{A}\times \mathcal{B}$, and therefore
we have
\begin{equation*}
\mathbb{E}\big(\sup_{r\in \left[ 0,T\right] }e^{\beta r}|\Delta U\left(
r\right) |^{2}\big)\,\rightarrow 0\quad \text{and}\quad \mathbb{E}%
\int_{0}^{T}e^{\beta r}|\Delta V\left( r\right) |^{2}dr\,\rightarrow 0\,,
\end{equation*}%
as $t^{\prime }\rightarrow t$, we have%
\begin{equation}
\mathbb{E}\big(\sup_{s\in \left[ t^{\prime },T\right] }e^{\beta s}|\Delta
Y\left( s\right) |^{2}\big)\rightarrow 0\quad \text{and}\quad \mathbb{E}%
\int_{t^{\prime }}^{T}e^{\beta r}|\Delta Z\left( r\right) |^{2}dr\rightarrow
0,\quad \text{as }t^{\prime }\rightarrow t.  \label{technical ineq 16}
\end{equation}

We are left to show that the term $\mathbb{E}\big(\sup_{s\in\left[
t,t^{\prime}\right] }|Y^{t}\left( t\right) -Y^{s}\left( s\right) |^{2}\big)$
is also converging to $0$ as $t^{\prime}\rightarrow t$.

Since the map $t\mapsto Y^{t}\left( t\right) $ is deterministic, we have
from equation (\ref{BSDE iterative 1}),%
\begin{equation*}
\begin{array}{l}
\displaystyle Y^{t}\left( t\right) -Y^{s}\left( s\right) =\mathbb{E}\big[%
Y^{t}\left( t\right) -Y^{s}\left( s\right) \big]\medskip \\
\displaystyle=\mathbb{E}\big[h(X^{t,\phi })-h(X^{s,\phi })\big]+\mathbb{E}%
\int_{t}^{T}F(r,X^{t,\phi },Y^{t}\left( r\right) ,Z^{t}\left( r\right)
,U_{r}^{t},V_{r}^{t})dr\medskip \\
\displaystyle\quad -\mathbb{E}\int_{s}^{T}F(r,X^{s,\phi },Y^{s}\left(
r\right) ,Z^{s}\left( r\right) ,U_{r}^{s},V_{r}^{s})dr\medskip \\
\displaystyle=\mathbb{E}\big[h(X^{t,\phi })-h(X^{s,\phi })\big]+\mathbb{E}%
\int_{t}^{s}F(r,X^{t,\phi },Y^{t}\left( r\right) ,Z^{t}\left( r\right)
,U_{r}^{t},V_{r}^{t})dr\medskip \\
\displaystyle\quad +\mathbb{E}\int_{s}^{T}\big[F(r,X^{t,\phi },Y^{t}\left(
r\right) ,Z^{t}\left( r\right) ,U_{r}^{t},V_{r}^{t})-F(r,X^{s,\phi
},Y^{s}\left( r\right) ,Z^{s}\left( r\right) ,U_{r}^{s},V_{r}^{s})\big]dr.%
\end{array}%
\end{equation*}%
Using then assumption \textrm{(A}$_{3}$\textrm{)} we have%
\begin{equation*}
\begin{array}{l}
\displaystyle|Y^{t}\left( t\right) -Y^{s}\left( s\right) |\leq \mathbb{E}%
\big|h(X^{t,\phi })-h(X^{s,\phi })\big|+\mathbb{E}\int_{t}^{s}L\big(%
|Y^{t}\left( r\right) |+|Z^{t}\left( r\right) |\big)dr\medskip \\
\displaystyle+\sqrt{K\int_{t}^{s}\mathbb{E}\big(\int_{-\delta }^{0}\left(
|U^{t}\left( r+\theta \right) |^{2}+|V^{t}\left( r+\theta \right)
|^{2}\right) \alpha (d\theta )\big)dr}\cdot \sqrt{s-t}\medskip \\
\displaystyle\quad +\mathbb{E}\int_{t}^{s}\big|F(r,X^{t,\phi },0,0,0,0)\big|%
dr\medskip \\
\displaystyle+\mathbb{E}\int_{s}^{T}\big|F(r,X^{t,\phi },Y^{t}\left(
r\right) ,Z^{t}\left( r\right) ,U_{r}^{t},V_{r}^{t})-F(r,X^{s,\phi
},Y^{t}\left( r\right) ,Z^{t}\left( r\right) ,U_{r}^{t},V_{r}^{t})\big|%
dr\medskip \\
\displaystyle+\mathbb{E}\int_{s}^{T}L\big(|Y^{t}\left( r\right) -Y^{s}\left(
r\right) |+|Z^{t}\left( r\right) -Z^{s}\left( r\right) |\big)dr\medskip \\
\displaystyle+\sqrt{K(T-s)\int_{s}^{T}\mathbb{E}\big(\int_{-\delta
}^{0}\left( |U^{t}\left( r+\theta \right) -U^{s}\left( r+\theta \right)
|^{2}+|V^{t}\left( r+\theta \right) -V^{s}\left( r+\theta \right)
|^{2}\right) \alpha (d\theta )\big)dr}%
\end{array}%
\end{equation*}%
and therefore we obtain%
\begin{equation*}
\begin{array}{l}
\displaystyle|Y^{t}\left( t\right) -Y^{s}\left( s\right) |\medskip \\
\displaystyle\leq \mathbb{E}\big|h(X^{t,\phi })-h(X^{s,\phi })\big|+L\sqrt{%
s-t}\sqrt{T\mathbb{E}\sup_{r\in \left[ 0,T\right] }|Y^{t}\left( r\right)
|^{2}+\mathbb{E}\int_{0}^{T}|Z^{t}\left( r\right) |^{2}dr}\medskip \\
\displaystyle\quad +\sqrt{K}\sqrt{s-t}\sqrt{T\mathbb{E}\sup_{r\in \left[ 0,T%
\right] }|U^{t}\left( r\right) |^{2}+\mathbb{E}\int_{0}^{T}|V^{t}\left(
r\right) |^{2}dr}+\left( s-t\right) M(1+\mathbb{E}||X^{t,\phi
}||_{T}^{p})\medskip \\
\displaystyle\quad +\mathbb{E}\int_{s}^{T}\big|F(r,X^{t,\phi },Y^{t}\left(
r\right) ,Z^{t}\left( r\right) ,U_{r}^{t},V_{r}^{t})-F(r,X^{s,\phi
},Y^{t}\left( r\right) ,Z^{t}\left( r\right) ,U_{r}^{t},V_{r}^{t})\big|%
dr\medskip \\
\displaystyle\quad +L\sqrt{T-s}\sqrt{T\mathbb{E}\sup_{r\in \left[ s,T\right]
}|Y^{t}\left( r\right) -Y^{s}\left( r\right) |^{2}+\mathbb{E}%
\int_{s}^{T}|Z^{t}\left( r\right) -Z^{s}\left( r\right) |^{2}dr}\medskip \\
\displaystyle\quad +\sqrt{K}\sqrt{T-s}\sqrt{T\mathbb{E}\sup_{r\in \left[ 0,T%
\right] }|U^{t}\left( r\right) -U^{s}\left( r\right) |^{2}+\mathbb{E}%
\int_{0}^{T}|V^{t}\left( r\right) -V^{s}\left( r\right) |^{2}dr}\,.%
\end{array}%
\end{equation*}%
Taking again into account the fact that $\left( U,V\right) \in \mathcal{A}%
\times \mathcal{B}$, previous step and assumptions $\mathrm{(A}_{3}\mathrm{)}
$ and $\mathrm{(A}_{5}\mathrm{)}$, we infer that
\begin{equation}
\mathbb{E}\big(\sup_{s\in \left[ t,t^{\prime }\right] }|Y^{t}\left( t\right)
-Y^{s}\left( s\right) |\big)\rightarrow 0,\quad \text{as }t^{\prime
}\rightarrow t.  \label{technical ineq 17}
\end{equation}

Concerning the term $\mathbb{E}\int_{0}^{T}|Z^{t}\left( r\right)
-Z^{t^{\prime}}\left( r\right) |^{2}dr$, we see that%
\begin{equation*}
\begin{array}{l}
\displaystyle\mathbb{E}\int_{0}^{T}|Z^{t}\left( r\right)
-Z^{t^{\prime}}\left( r\right) |^{2}dr=\mathbb{E}\int_{0}^{t^{\prime}}|Z^{t}%
\left( r\right) -Z^{t^{\prime}}\left( r\right) |^{2}dr+\mathbb{E}%
\int_{t^{\prime}}^{T}|Z^{t}\left( r\right) -Z^{t^{\prime}}\left( r\right)
|^{2}dr\medskip \\
\displaystyle=\mathbb{E}\int_{t}^{t^{\prime}}|Z^{t}\left( r\right) |^{2}dr+%
\mathbb{E}\int_{t^{\prime}}^{T}|Z^{t}\left( r\right) -Z^{t^{\prime}}\left(
r\right) |^{2}dr\,,%
\end{array}%
\end{equation*}
hence, by (\ref{technical ineq 14}),
\begin{equation}
\mathbb{E}\int_{0}^{T}|Z^{t}\left( r\right) -Z^{t^{\prime}}\left( r\right)
|^{2}dr\rightarrow0,\quad\text{as }t^{\prime}\rightarrow t.
\label{technical ineq 18}
\end{equation}

\noindent\textbf{\textit{Step II.}}

We are now to prove that $\Gamma$ is a contraction on the space $\mathcal{A}%
\times\mathcal{B}$ with respect to the norms%
\begin{equation*}
|||\left( Y,Z\right) |||_{\mathcal{A}\times\mathcal{B}}:=\left(
|||Y|||_{1}^{2}+|||Z|||_{2}^{2}\right) ^{1/2}\,,
\end{equation*}
where%
\begin{equation*}
\begin{array}{l}
\displaystyle|||Y|||_{1}^{2}:=\sup_{t\in\left[ 0,T\right] }\mathbb{E}\big(%
\sup_{r\in\left[ 0,T\right] }e^{\beta r}|Y^{t}\left( r\right) |^{2}\big)%
,\medskip \\
\displaystyle|||Z|||_{2}^{2}:=\sup_{t\in\left[ 0,T\right] }\mathbb{E}%
\int_{0}^{T}e^{\beta r}|Z^{t}\left( r\right) |^{2}dr.%
\end{array}%
\end{equation*}

Let us recall that $\Gamma:\mathcal{A}\times\mathcal{B}\rightarrow \mathcal{A%
}\times\mathcal{B}$ is defined by $\Gamma\left( U,V\right) =\left(
Y,Z\right) $, where $\left( Y,Z\right) $ is the solution of the BSDE (\ref%
{BSDE iterative 1}).

Let us consider $\left( U^{1},V^{1}\right) ,\left( U^{2},V^{2}\right) \in%
\mathcal{A}\times\mathcal{B}$ and $\left( Y^{1},Z^{1}\right) :=\Gamma\left(
U^{1},V^{1}\right) $, $\left( Y^{2},Z^{2}\right) :=\Gamma\left(
U^{2},V^{2}\right) $. For the sake of brevity, we will denote in what follows%
\begin{equation*}
\begin{array}{l}
\Delta F^{t}\left( r\right) :=F(r,X^{t,\phi},Y^{1,t}\left( r\right)
,Z^{1,t}\left( r\right) ,U_{r}^{1,t},V_{r}^{1,t})\medskip \\
\quad\quad\quad\quad-F(r,X^{t,\phi},Y^{2,t}\left( r\right) ,Z^{2,t}\left(
r\right) ,U_{r}^{2,t},V_{r}^{2,t}),\medskip \\
\Delta U^{t}\left( r\right) :=U^{1,t}\left( r\right) -U^{2,t}\left( r\right)
,\quad\Delta V^{t}\left( r\right) :=V^{1,t}\left( r\right) -V^{2,t}\left(
r\right) ,\medskip \\
\Delta Y^{t}\left( r\right) :=Y^{1,t}\left( r\right) -Y^{2,t}\left( r\right)
,\quad\Delta Z^{t}\left( r\right) :=Z^{1,t}\left( r\right) -Z^{2,t}\left(
r\right) .%
\end{array}%
\end{equation*}

Proceeding as in \textbf{\textit{Step I}}, we have from It\^{o}'s formula,
for any $s\in \left[ t,T\right] $ and $\beta >0$,%
\begin{equation}
\begin{array}{l}
\displaystyle e^{\beta s}|\Delta Y^{t}\left( s\right) |^{2}+\beta
\int_{s}^{T}e^{\beta r}|\Delta Y^{t}\left( r\right)
|^{2}dr+\int_{s}^{T}e^{\beta r}|\Delta Z^{t}\left( r\right) |^{2}dr\medskip
\\
\displaystyle=2\int_{s}^{T}e^{\beta r}\langle \Delta Y^{t}\left( r\right)
,\Delta F^{t}\left( r\right) \rangle dr-2\int_{s}^{T}e^{\beta r}\langle
\Delta Y^{t}\left( r\right) ,\Delta Z^{t}\left( r\right) \rangle dW\left(
r\right) .%
\end{array}
\label{technical ineq 1}
\end{equation}

Noticing that it holds
\begin{equation*}
\begin{array}{l}
\displaystyle\frac{2K}{a}\int_{s}^{T}e^{\beta r}\Big(\int_{-\delta}^{0}\big(%
\left\vert \Delta U^{t}(r+\theta)\right\vert ^{2}+\left\vert \Delta
V^{t}(r+\theta)\right\vert ^{2}\big)\alpha(d\theta)\Big)dr\medskip \\
\displaystyle\leq\frac{2K}{a}\int_{-\delta}^{0}\Big(\int_{s}^{T}e^{\beta r}%
\big(\left\vert \Delta U^{t}(r+\theta)\right\vert ^{2}+\left\vert \Delta
V^{t}(r+\theta)\right\vert ^{2}\big)dr\Big)\alpha(d\theta)\medskip \\
\displaystyle\leq\frac{2K}{a}\int_{-\delta}^{0}\Big(\int_{s+r}^{T+r}e^{\beta%
\left( r^{\prime}-\theta\right) }\big(\left\vert \Delta
U^{t}(r^{\prime})\right\vert ^{2}+\left\vert \Delta V^{t}(r^{\prime
})\right\vert ^{2}\big)dr^{\prime}\Big)\alpha(d\theta)\medskip \\
\displaystyle\leq\frac{2K}{a}\int_{-\delta}^{0}e^{-\beta\theta}\alpha
(d\theta)\cdot\int_{s-\delta}^{T}e^{\beta r}\big(\left\vert \Delta
U^{t}(r)\right\vert ^{2}+\left\vert \Delta V^{t}(r)\right\vert ^{2}\big)%
dr\medskip \\
\displaystyle\leq\frac{2Ke^{\beta\delta}}{a}\int_{s-\delta}^{T}e^{\beta r}%
\big(\left\vert \Delta U^{t}(r)\right\vert ^{2}+\left\vert \Delta
V^{t}(r)\right\vert ^{2}\big)dr.%
\end{array}%
\end{equation*}
we immediately have, from assumptions $\mathrm{(A}_{3}\mathrm{)}$--$\mathrm{%
(A}_{5}\mathrm{)}$, that for any $a>0$,
\begin{equation}
\begin{array}{l}
\displaystyle2\Big|\int_{s}^{T}e^{\beta r}\langle\Delta Y^{t}\left( r\right)
,\Delta F^{t}\left( r\right) \rangle dr\Big|\leq2\int_{s}^{T}e^{\beta
r}|\langle\Delta Y^{t}\left( r\right) ,\Delta F^{t}\left( r\right)
\rangle|dr\medskip \\
\displaystyle\leq a\int_{s}^{T}e^{\beta r}|\Delta Y^{t}\left( r\right) |^{2}+%
\frac{1}{a}\int_{s}^{T}e^{\beta r}|\Delta F^{t}\left( r\right)
|^{2}dr\medskip \\
\displaystyle\leq a\int_{s}^{T}e^{\beta r}|\Delta Y^{t}\left( r\right) |^{2}+%
\frac{2}{a}\int_{s}^{T}e^{\beta r}L^{2}\left( |\Delta Y^{t}\left( r\right)
|+|\Delta Z^{t}\left( r\right) |\right) ^{2}dr\medskip \\
\displaystyle\quad+\frac{2}{a}\int_{s}^{T}e^{\beta r}\Big(K\int_{-\delta}^{0}%
\big(\left\vert \Delta U^{t}(r+\theta)\right\vert ^{2}+\left\vert \Delta
V^{t}(r+\theta)\right\vert ^{2}\big)\alpha(d\theta)\Big)dr\medskip \\
\displaystyle\leq a\int_{s}^{T}e^{\beta r}|\Delta Y^{t}\left( r\right) |^{2}+%
\frac{4L^{2}}{a}\int_{s}^{T}e^{\beta r}\big(|\Delta Y^{t}\left( r\right)
|^{2}+|\Delta Z^{t}\left( r\right) |^{2}\big)dr\medskip \\
\displaystyle\quad+\frac{2Ke^{\beta\delta}}{a}\int_{s-\delta}^{T}e^{\beta r}%
\big(\left\vert \Delta U^{t}(r)\right\vert ^{2}+\left\vert \Delta
V^{t}(r)\right\vert ^{2}\big)dr\,.%
\end{array}
\label{technical ineq 2}
\end{equation}

Therefore equation (\ref{technical ineq 1}) yields%
\begin{equation}
\begin{array}{l}
\displaystyle e^{\beta s}|\Delta Y^{t}\left( s\right) |^{2}+\left( \beta -a-%
\frac{4L^{2}}{a}\right) \int_{s}^{T}e^{\beta r}|\Delta Y^{t}\left( r\right)
|^{2}dr+\left( 1-\frac{4L^{2}}{a}\right) \int_{s}^{T}e^{\beta r}|\Delta
Z^{t}\left( r\right) |^{2}dr\medskip \\
\displaystyle\leq \frac{2Ke^{\beta \delta }}{a}T\sup_{r\in \left[ 0,T\right]
}e^{\beta r}|\Delta U^{t}\left( r\right) |^{2}+\frac{2Ke^{\beta \delta }}{a}%
\int_{0}^{T}e^{\beta r}|\Delta V^{t}\left( r\right) |^{2}dr\medskip \\
\displaystyle\quad -2\int_{s}^{T}e^{\beta r}\langle \Delta Y^{t}\left(
r\right) ,\Delta Z^{t}\left( r\right) \rangle dW\left( r\right) .%
\end{array}
\label{technical ineq 19}
\end{equation}%
Let now $\beta ,a>0$ satisfying%
\begin{equation}
\beta >a+\frac{4L^{2}}{a}\quad \text{and}\quad 1>\frac{4L^{2}}{a}\,,
\label{restriction_parameters2}
\end{equation}%
we have%
\begin{equation}
\begin{array}{l}
\displaystyle\left( 1-\frac{4L^{2}}{a}\right) \mathbb{E}\int_{s}^{T}e^{\beta
r}|\Delta Z^{t}\left( r\right) |^{2}dr\medskip \\
\displaystyle\leq \frac{2TKe^{\beta \delta }}{a}\mathbb{E}\big(\sup_{r\in %
\left[ 0,T\right] }e^{\beta r}|\Delta U^{t}\left( r\right) |^{2}\big)+\frac{%
2Ke^{\beta \delta }}{a}\mathbb{E}\int_{0}^{T}e^{\beta r}|\Delta V^{t}\left(
r\right) |^{2}dr\,.%
\end{array}
\label{technical ineq 3}
\end{equation}

Exploiting now Burkholder--Davis--Gundy's inequality, we have%
\begin{equation*}
\begin{array}{l}
\displaystyle2\mathbb{E}\Big[\sup_{s\in \left[ t,T\right] }\Big|%
\int_{s}^{T}e^{\beta r}\langle \Delta Y^{t}\left( r\right) ,\Delta
Z^{t}\left( r\right) \rangle dW\left( r\right) \Big|\Big]\medskip \\
\displaystyle\leq 4\mathbb{E}\Big[\sup_{s\in \left[ t,T\right] }\Big|%
\int_{t}^{s}e^{\beta r}\langle \Delta Y^{t}\left( r\right) ,\Delta
Z^{t}\left( r\right) \rangle dW\left( r\right) \Big|\Big]\medskip \\
\displaystyle\leq \frac{1}{2}\mathbb{E}\big(\sup_{s\in \left[ t,T\right]
}e^{\beta s}|\Delta Y^{t}\left( s\right) |^{2}\big)+72\mathbb{E}%
\int_{t}^{T}e^{\beta r}|\Delta Z^{t}\left( r\right) |^{2}dr\,,%
\end{array}%
\end{equation*}%
which implies
\begin{equation*}
\begin{array}{l}
\displaystyle\mathbb{E}\big(\sup_{s\in \left[ t,T\right] }e^{\beta s}|\Delta
Y^{t}\left( s\right) |^{2}\big)\leq \frac{2Ke^{\beta \delta }}{a}T\mathbb{E}%
\big(\sup_{s\in \left[ 0,T\right] }e^{\beta s}|\Delta U^{t}\left( s\right)
|^{2}\big)+\frac{2Ke^{\beta \delta }}{a}\mathbb{E}\int_{0}^{T}e^{\beta
r}|\Delta V^{t}\left( r\right) |^{2}dr\medskip \\
\displaystyle\quad +2\mathbb{E}\Big[\sup_{s\in \left[ t,T\right] }\Big|%
\int_{s}^{T}e^{\beta r}\langle \Delta Y^{t}\left( r\right) ,\Delta
Z^{t}\left( r\right) \rangle dW\left( r\right) \Big|\Big]\medskip \\
\displaystyle\leq \frac{2Ke^{\beta \delta }}{a}T\mathbb{E}\big(\sup_{s\in %
\left[ 0,T\right] }e^{\beta s}|\Delta U^{t}\left( s\right) |^{2}\big)+\frac{%
2Ke^{\beta \delta }}{a}\mathbb{E}\int_{0}^{T}e^{\beta r}|\Delta V^{t}\left(
r\right) |^{2}dr\medskip \\
\displaystyle\quad +\frac{1}{2}\mathbb{E}\big(\sup_{s\in \left[ t,T\right]
}e^{\beta s}|\Delta Y^{t}\left( s\right) |^{2}\big)+72\mathbb{E}%
\int_{t}^{T}e^{\beta r}|\Delta Z^{t}\left( r\right) |^{2}dr\,.%
\end{array}%
\end{equation*}%
Hence, we have%
\begin{equation}
\begin{array}{l}
\displaystyle\mathbb{E}\big(\sup_{s\in \left[ t,T\right] }e^{\beta s}|\Delta
Y^{t}\left( s\right) |^{2}\big)\medskip \\
\displaystyle\leq \frac{4TKe^{\beta \delta }}{a}C_{1}\mathbb{E}\big(%
\sup_{s\in \left[ 0,T\right] }e^{\beta s}|\Delta U^{t}\left( s\right) |^{2}%
\big)+\frac{4Ke^{\beta \delta }}{a}C_{1}\mathbb{E}\int_{0}^{T}e^{\beta
r}|\Delta V^{t}\left( r\right) |^{2}dr,%
\end{array}
\label{technical ineq 5}
\end{equation}%
where we have denoted by $C_{1}:=1+\frac{72}{1-4L^{2}/a}\,.\medskip $

Let us now consider the term $\mathbb{E}\big(\sup_{s\in \left[ 0,t\right]
}e^{\beta s}|\Delta Y\left( s\right) |^{2}\big)$. From equation (\ref{BSDE
iterative 1}), we see that,
\begin{equation}
\begin{array}{l}
\displaystyle\mathbb{E}\big(\sup_{s\in \left[ 0,t\right] }e^{\beta s}|\Delta
Y^{t}\left( s\right) |^{2}\big)=\mathbb{E}\big(\sup_{s\in \left[ 0,t\right]
}e^{\beta s}|Y^{1,t}(s)-Y^{2,t}(s)|^{2}\big)\medskip \\
\displaystyle=\mathbb{E}\big(\sup_{s\in \left[ 0,t\right] }e^{\beta
s}|Y^{1,s}(s)-Y^{2,s}(s)|^{2}\big)=\sup_{s\in \left[ 0,t\right] }e^{\beta
s}|\Delta Y^{s}\left( s\right) |^{2}=\sup_{s\in \left[ 0,t\right] }\mathbb{E}%
\big(e^{\beta s}|\Delta Y^{s}\left( s\right) |^{2}\big)%
\end{array}
\label{technical ineq 6}
\end{equation}%
so that, exploiting It\^{o}'s formula and proceeding as above, we obtain%
\begin{equation}
\displaystyle\mathbb{E}\big(e^{\beta s}|\Delta Y^{s}\left( s\right) |^{2}%
\big)\leq \frac{2TKe^{\beta \delta }}{a}\mathbb{E}\big(\sup_{r\in \left[ 0,T%
\right] }e^{\beta r}|\Delta U^{s}\left( r\right) |^{2}\big)+\frac{2Ke^{\beta
\delta }}{a}\mathbb{E}\int_{0}^{T}e^{\beta r}|\Delta V^{s}\left( r\right)
|^{2}dr.  \label{technical ineq 6'}
\end{equation}%
Thus from inequalities (\ref{technical ineq 3}--\ref{technical ineq 6'}) we
obtain%
\begin{equation*}
\begin{array}{l}
\displaystyle\mathbb{E}\big(\sup_{s\in \left[ 0,T\right] }e^{\beta r}|\Delta
Y^{t}\left( s\right) |^{2}\big)+\mathbb{E}\int_{0}^{T}e^{\beta r}|\Delta
Z^{t}\left( r\right) |^{2}dr\medskip \\
\displaystyle\leq \frac{4TKe^{\beta \delta }}{a}C_{1}\mathbb{E}\big(%
\sup_{s\in \left[ 0,T\right] }e^{\beta s}|\Delta U^{t}\left( s\right) |^{2}%
\big)+\frac{4Ke^{\beta \delta }}{a}C_{1}\mathbb{E}\int_{0}^{T}e^{\beta
r}|\Delta V^{t}\left( r\right) |^{2}dr\medskip \\
\displaystyle\quad +\frac{2TKe^{\beta \delta }}{a\left( 1-4L^{2}/a\right) }%
\mathbb{E}\big(\sup_{r\in \left[ 0,T\right] }e^{\beta r}|\Delta U^{t}\left(
r\right) |^{2}\big)+\frac{2Ke^{\beta \delta }}{a\left( 1-4L^{2}/a\right) }%
\mathbb{E}\int_{0}^{T}e^{\beta r}|\Delta V^{t}\left( r\right) |^{2}dr\medskip
\\
\displaystyle\quad +\frac{2TKe^{\beta \delta }}{a}\cdot \sup_{s\in \left[ 0,t%
\right] }\mathbb{E}\big(\sup_{r\in \left[ 0,T\right] }e^{\beta r}|\Delta
U^{s}\left( r\right) |^{2}\big)+\frac{2Ke^{\beta \delta }}{a}\cdot
\sup_{s\in \left[ 0,t\right] }\mathbb{E}\int_{0}^{T}e^{\beta r}|\Delta
V^{s}\left( r\right) |^{2}dr.%
\end{array}%
\end{equation*}%
Passing then to the supremum for $t\in \left[ 0,T\right] $ we get%
\begin{equation*}
|||Y^{1}-Y^{2}|||_{1}^{2}+|||Z^{1}-Z^{2}|||_{2}^{2}\medskip \leq \frac{%
2Ke^{\beta \delta }}{a}\Big(3+\frac{145}{1-4L^{2}/a}\Big)\max \left\{
1,T\right\} \Big[|||U^{1}-U^{2}|||_{1}^{2}+|||V^{1}-V^{2}|||_{2}^{2}\Big].
\end{equation*}%
By choosing now $a:=\frac{4L^{2}}{\gamma }$ and $\beta $ slightly bigger
than $\gamma +\frac{4L^{2}}{\gamma }$, condition (\ref%
{restriction_parameters2}) is satisfied and, by restriction $\mathrm{(C)}$
we have%
\begin{equation}
\frac{2Ke^{\beta \delta }}{a}\Big(3+\frac{145}{1-4L^{2}/a}\Big)\max \left\{
1,T\right\} <1\,.  \label{restriction 1}
\end{equation}

Eventually, since $U$ and $V$ were chosen arbitrarily, it follows that the
application $\Gamma$ is a contraction on the space $\mathcal{A}\times
\mathcal{B}$. Therefore there exists a unique fixed point $\Gamma
(Y,Z)=(Y,Z)\in\mathcal{A}\times\mathcal{B}$ and this finishes the proof of
the existence and a uniqueness of a solution to our BSDE with delay.\hfill
\end{proof}

\bigskip

\begin{proof}[Proof of Theorem \protect\ref{theorem 2}]
Let us first prove the continuity of $\mathbb{\Lambda }\ni \phi \mapsto
u\left( t,\phi \right) $, uniformly with respect to $t\in \left[ 0,T\right] $%
.

Let us thus take $t\in \left[ 0,T\right] $, $\phi ,\phi ^{\prime }\in
\mathbb{\Lambda }$, and let us denote%
\begin{equation*}
\Delta Y\left( r\right) :=Y^{t,\phi }\left( r\right) -Y^{t,\phi ^{\prime
}}\left( r\right) ,\quad \Delta Z\left( r\right) :=Z^{t,\phi }\left(
r\right) -Z^{t,\phi ^{\prime }}\left( r\right)
\end{equation*}%
and%
\begin{equation*}
\Delta f\left( r\right) :=f(r,X^{t,\phi },Y^{t,\phi }\left( r\right)
,Z^{t,\phi }\left( r\right) ,Y_{r}^{t,\phi })-f(r,X^{t,\phi ^{\prime
}},Y^{t,\phi }\left( r\right) ,Z^{t,\phi }\left( r\right) ,Y_{r}^{t,\phi }).
\end{equation*}%
Using similar calculus as in the proof of Theorem 7, for $\beta ,a>0$ such
that%
\begin{equation}
\beta >a+\frac{6L^{2}}{a}\quad \text{and}\quad 1>\frac{6L^{2}}{a}\,,
\label{technical ineq 9}
\end{equation}%
we obtain%
\begin{equation}
\begin{array}{l}
\displaystyle\left( 1-\frac{6L^{2}}{a}\right) \mathbb{E}\int_{t}^{T}e^{\beta
r}|\Delta Z\left( r\right) |^{2}dr\medskip \\
\displaystyle\leq \mathbb{E}\big(e^{\beta T}|\Delta Y\left( T\right) |^{2}%
\big)+\frac{3}{a}\mathbb{E}\int_{t}^{T}e^{\beta r}|\Delta f\left( r\right)
|^{2}dr+\frac{3Ke^{\beta \delta }}{a}\mathbb{E}\int_{t-\delta }^{T}e^{\beta
r}\left\vert \Delta Y(r)\right\vert ^{2}dr.%
\end{array}
\label{technical ineq 10}
\end{equation}%
Denoting%
\begin{equation}
C_{1}:=1+\frac{144}{1-6L^{2}/a}  \label{technical ineq 11}
\end{equation}%
and choosing $a:=\frac{6L^{2}}{\gamma }$ and $\beta $ bigger than $\gamma +%
\frac{6L^{2}}{\gamma }$, condition (\ref{technical ineq 9}) is satisfied and%
\begin{equation}
T\frac{3Ke^{\beta \delta }}{a}C_{1}<\frac{1}{4}\,,  \label{restriction 2}
\end{equation}%
by restriction $\mathrm{(C)}$. We then have%
\begin{equation}
\begin{array}{l}
\displaystyle\frac{1}{2}\mathbb{E}\big(\sup_{s\in \left[ t,T\right]
}e^{\beta s}|\Delta Y\left( s\right) |^{2}\big)\medskip \\
\displaystyle\leq C_{1}\mathbb{E}\big(e^{\beta T}|\Delta Y\left( T\right)
|^{2}\big)+\frac{3C_{1}}{a}\mathbb{E}\int_{t}^{T}e^{\beta r}|\Delta f\left(
r\right) |^{2}dr+\frac{3K\delta e^{\beta \delta }}{a}C_{1}\mathbb{E}\big(%
\sup_{s\in \left[ t-\delta ,t\right] }e^{\beta s}|\Delta Y\left( s\right)
|^{2}\big).%
\end{array}
\label{technical ineq 13}
\end{equation}%
Exploiting the initial conditions satisfied by the $Y^{t,\phi }$, we can
rewrite equation \eqref{technical ineq 13} as%
\begin{equation*}
\begin{array}{l}
\displaystyle\mathbb{E}\big(\sup_{s\in \left[ t,T\right] }e^{\beta s}|\Delta
Y\left( s\right) |^{2}\big)\leq 2C_{1}\mathbb{E}\big(e^{\beta T}|\Delta
Y\left( T\right) |^{2}\big)+\frac{6C_{1}}{a}\mathbb{E}\int_{t}^{T}e^{\beta
r}|\Delta f\left( r\right) |^{2}dr\medskip \\
\displaystyle\quad +\frac{6K\delta e^{\beta \delta }}{a}C_{1}\sup_{s\in %
\left[ t-\delta ,t\right] }\mathbb{E}\big(\sup_{r\in \left[ s,T\right]
}e^{\beta \left( s-r\right) }e^{\beta r}|Y^{s,\phi }\left( r\right)
-Y^{s,\phi ^{\prime }}\left( r\right) |^{2}\big).%
\end{array}%
\end{equation*}%
Passing to the supremum for $t\in \left[ 0,T\right] $ we have%
\begin{equation}
\begin{array}{l}
\displaystyle\sup_{t\in \left[ 0,T\right] }\mathbb{E}\big(\sup_{s\in \left[
t,T\right] }e^{\beta s}|Y^{t,\phi }\left( s\right) -Y^{t,\phi ^{\prime
}}\left( s\right) |^{2}\big)\medskip \\
\displaystyle\leq 2C_{1}\sup_{t\in \left[ 0,T\right] }\mathbb{E}\big(%
e^{\beta T}|h(X^{t,\phi })-h(X^{t,\phi ^{\prime }})|^{2}\big)+\frac{6C_{1}}{a%
}\sup_{t\in \left[ 0,T\right] }\mathbb{E}\int_{t}^{T}e^{\beta r}|\Delta
f\left( r\right) |^{2}dr\medskip \\
\displaystyle\quad +\frac{6K\delta e^{\beta \delta }}{a}C_{1}\sup_{s\in %
\left[ 0,T\right] }\mathbb{E}\big(\sup_{r\in \left[ s,T\right] }e^{\beta
r}|Y^{s,\phi }\left( r\right) -Y^{s,\phi ^{\prime }}\left( r\right) |^{2}%
\big).%
\end{array}
\label{technical ineq 7}
\end{equation}%
We can now see that
\begin{equation}
\begin{array}{l}
\displaystyle\mathbb{E}\big(\sup_{s\in \left[ 0,t\right] }e^{\beta
s}|Y^{t,\phi }\left( s\right) -Y^{t,\phi ^{\prime }}\left( s\right) |^{2}%
\big)=\sup_{s\in \left[ 0,t\right] }e^{\beta s}|Y^{s,\phi }\left( s\right)
-Y^{s,\phi ^{\prime }}\left( s\right) |^{2}\medskip \\
\displaystyle\leq \sup_{s\in \left[ 0,T\right] }\mathbb{E}\big(\sup_{r\in %
\left[ s,T\right] }e^{\beta r}|Y^{s,\phi }\left( r\right) -Y^{s,\phi
^{\prime }}\left( r\right) |^{2}\big)\,,%
\end{array}
\label{technical ineq 8}
\end{equation}%
so that we can apply again inequality (\ref{technical ineq 7}).

From inequalities (\ref{technical ineq 7}) and (\ref{technical ineq 8}) we
can conclude that
\begin{equation*}
\begin{array}{l}
\displaystyle\sup_{t\in\left[ 0,T\right] }\mathbb{E}\big(\sup_{s\in\left[ 0,T%
\right] }e^{\beta s}|Y^{t,\phi}\left( s\right) -Y^{t,\phi^{\prime}}\left(
s\right) |^{2}\big)\medskip \\
\displaystyle\leq4C_{1}\sup_{t\in\left[ 0,T\right] }\mathbb{E}\big(e^{\beta
T}|h(X^{t,\phi})-h(X^{t,\phi^{\prime}})|^{2}\big)+\frac{12C_{1}}{a}\sup
_{t\in\left[ 0,T\right] }\mathbb{E}\int_{t}^{T}e^{\beta r}|\Delta f\left(
r\right) |^{2}dr\medskip \\
\displaystyle\quad+\frac{12K\delta e^{\beta\delta}}{a}C_{1}\sup_{s\in\left[
0,T\right] }\mathbb{E}\big(\sup_{r\in\left[ s,T\right] }e^{\beta
r}|Y^{s,\phi}\left( r\right) -Y^{s,\phi^{\prime}}\left( r\right) |^{2}\big).%
\end{array}%
\end{equation*}
Since $\delta\leq T$, by (\ref{restriction 2}) we also have $\frac{12K\delta
e^{\beta\delta}}{a}C_{1}<1$ and so%
\begin{equation*}
\begin{array}{l}
\displaystyle\left( 1-\frac{12K\delta e^{\beta\delta}}{a}C_{1}\right)
\sup_{t\in\left[ 0,T\right] }\mathbb{E}\big(\sup_{s\in\left[ 0,T\right]
}e^{\beta s}|Y^{t,\phi}\left( s\right) -Y^{t,\phi^{\prime}}\left( s\right)
|^{2}\big)\medskip \\
\displaystyle\leq4C_{1}\sup_{t\in\left[ 0,T\right] }\mathbb{E}\big(e^{\beta
T}|h(X^{t,\phi})-h(X^{t,\phi^{\prime}})|^{2}\big)\medskip \\
\displaystyle\quad+\frac{12C_{1}}{a}\sup_{t\in\left[ 0,T\right] }\mathbb{E}%
\int_{t}^{T}e^{\beta r}|f(r,X^{t,\phi},Y^{t,\phi}\left( r\right)
,Z^{t,\phi}\left( r\right) ,Y_{r}^{t,\phi})+\medskip \\
\qquad\qquad\qquad\qquad\qquad\,\,\displaystyle-f(r,X^{t,\phi^{%
\prime}},Y^{t,\phi}\left( r\right) ,Z^{t,\phi}\left( r\right) ,Y_{r}^{t,\phi
})|^{2}dr\,.%
\end{array}%
\end{equation*}

Let us now fix $\phi\in\mathbb{\Lambda}$. In order to prove that $u$ is
continuous in $\phi$, uniformly with respect to $t\in\lbrack0,T]$, it is
enough to show that%
\begin{align*}
& \mathbb{E}\big(|h(X^{t,\phi})-h(X^{t,\phi^{\prime}})|^{2}\big) \\
& +\mathbb{E}\int_{0}^{T}|f(r,X^{t,\phi},Y^{t,\phi}\left( r\right)
,Z^{t,\phi}\left( r\right)
,Y_{r}^{t,\phi})-f(r,X^{t,\phi^{\prime}},Y^{t,\phi}\left( r\right)
,Z^{t,\phi}\left( r\right) ,Y_{r}^{t,\phi })|^{2}dr
\end{align*}
converge to $0$ as $\phi^{\prime}\rightarrow\phi$, uniformly in $t\in
\lbrack0,T]$.

Since we have no guarantee that the family $\big\{\big|Z^{t,\phi }\big|^{2}%
\big\}_{t\in \lbrack 0,T]}$ is uniformly integrable, we will use the
Lipschitz property of $f$ in the argument $\left( y,z,u\right) $ in order to
replace $[0,T]$ with a finite subset. By Theorem 7, the mapping $t\mapsto
\left( Y^{t,\phi },Z^{t,\phi }\right) $ is continuous from $[0,T]$ into $%
\mathcal{S}_{0}^{2,1}\times \mathcal{H}_{0}^{2,d^{\prime }}$ and therefore
uniformly continuous. Consequently, as $n\rightarrow \infty $, we have%
\begin{equation*}
\sup_{\left\vert t-t^{\prime }\right\vert \leq \frac{1}{n}}\mathbb{E}\left[
\sup_{s\in \lbrack 0,T]}\big(Y^{t,\phi }(s)-Y^{t^{\prime },\phi }(s)\big)%
^{2}+\int_{0}^{T}\big(Z^{t,\phi }(s)-Z^{t^{\prime },\phi }(s)\big)^{2}ds%
\right] \rightarrow 0.
\end{equation*}

Let, for $n\in\mathbb{N}^{\ast}$, $\pi_{n}:=\{0,\frac{T}{n},\dots ,\frac{%
(n-1)T}{n},T\}$, then, by $\mathrm{(A}_{6}\mathrm{)}$, we see that
\begin{equation*}
\displaystyle\sup_{t\in\lbrack0,T]}\sup_{t^{\prime}\in\pi_{n}}\mathbb{E}%
\int_{0}^{T}|f(r,X^{t,\phi},Y^{t^{\prime},\phi}\left( r\right)
,Z^{t^{\prime},\phi}\left( r\right) ,Y_{r}^{t^{\prime},\phi})-f(r,X^{t,\phi
^{\prime}},Y^{t^{\prime},\phi}\left( r\right) ,Z^{t^{\prime},\phi}\left(
r\right) ,Y_{r}^{t^{\prime},\phi})|^{2}dr\,,
\end{equation*}
converges to%
\begin{equation*}
\sup_{t\in\lbrack0,T]}\mathbb{E}\int_{0}^{T}|f(r,X^{t,\phi},Y^{t,\phi}\left(
r\right) ,Z^{t,\phi}\left( r\right) ,Y_{r}^{t,\phi})-f(r,X^{t,\phi^{\prime
}},Y^{t,\phi}\left( r\right) ,Z^{t,\phi}\left( r\right) ,Y_{r}^{t,\phi
})|^{2}dr,
\end{equation*}
uniformly in $\phi^{\prime}$. We are thus left to prove that%
\begin{align*}
& \mathbb{E}\big(|h(X^{t,\phi})-h(X^{t,\phi^{\prime}})|^{2}\big) \\
& +\mathbb{E}\int_{0}^{T}|f(r,X^{t,\phi},Y^{t^{\prime},\phi}\left( r\right)
,Z^{t^{\prime},\phi}\left( r\right) ,Y_{r}^{t^{\prime},\phi})-f(r,X^{t,\phi
^{\prime}},Y^{t^{\prime},\phi}\left( r\right) ,Z^{t^{\prime},\phi}\left(
r\right) ,Y_{r}^{t^{\prime},\phi})|^{2}dr
\end{align*}
converge to $0$ as $\phi^{\prime}\rightarrow\phi$, uniformly in $t\in
\lbrack0,T]$, for fixed $n\in\mathbb{N}^{\ast}$ and $t^{\prime}\in\pi_{n}$.

Let us thus introduce the modulus of continuity of the functions $h$ and $f$:

\begin{equation*}
m_{h,f}(\epsilon,\mathcal{K},\mathcal{U},\mathcal{\kappa}):=\sup _{\underset{%
|z|\leq\kappa,~||\phi-\phi^{\prime}||_{T}\leq\epsilon}{\phi^{\prime},\phi^{%
\prime\prime}\in\mathcal{K},\;t\in\lbrack0,T],\ (y,u)\in \mathcal{U}}%
}(|h(\phi^{\prime})-h(\phi^{\prime\prime})|+|f(t,\phi^{\prime
},y,z,u)-f(t,\phi^{\prime\prime},y,z,u)|)\,,
\end{equation*}
where $\epsilon>0$, $\mathcal{K}$\ is a compact in $\mathbb{\Lambda}$, $%
\mathcal{U}$ is a compact in $\mathbb{R}\times L^{2}\left( \left[ -\delta,0%
\right] ;\mathbb{R}\right) $ and $\kappa\in\mathbb{R}_{+}$.

Let $\epsilon>0$ be fixed, but arbitrary; with no loss generality, we can
suppose that the function $\phi^{\prime}$ lies in a compact $\mathcal{%
K\subseteq}\mathbb{\Lambda}$.

Using the estimates satisfied by the solution, we deduce that the family $%
\big\{\big(X^{t,\phi ^{\prime }},Y^{t^{\prime },\phi }\big)\big\}_{(t,\phi
^{\prime })\in \lbrack 0,T]\times \mathcal{K}}$ is tight with respect to the
product topology on $\mathbb{\Lambda }\times \mathcal{C}([0,T])$, and
therefore, for every $\epsilon >0$, there exist compact subsets $\mathcal{K}%
_{\epsilon }\subseteq \mathbb{\Lambda }$ and $\mathcal{K}_{\epsilon
}^{\prime }\subseteq \mathcal{C}([0,T])$ such that%
\begin{equation*}
\mathbb{P}\left( X^{t,\phi ^{\prime }}\in \mathcal{K}_{\epsilon },\
Y^{t^{\prime },\phi }\in \mathcal{K}_{\epsilon }^{\prime }\right) \geq
1-\epsilon ,\quad \text{for all }\left( t,\phi ^{\prime }\right) \in \lbrack
0,T]\times \mathcal{K}.
\end{equation*}%
For ease of the notation, let us define $\Phi :\mathbb{\Lambda }\times
\mathbb{\Lambda }\times \mathcal{C}([0,T])\times \mathbb{R}^{d}\rightarrow
\mathbb{R}$ by%
\begin{equation*}
\Phi \left( r,\phi ^{\prime },\phi ^{\prime \prime },y,z\right) :=\frac{1}{T}%
\left\vert h\left( \phi ^{\prime }\right) -h\left( \phi ^{\prime \prime
}\right) \right\vert ^{2}+|f(r,\phi ^{\prime },y\left( r\right)
,z,y_{r})-f(r,\phi ^{\prime \prime },y\left( r\right) ,z,y_{r})|^{2}.
\end{equation*}%
We can see by $\mathrm{(A}_{6}\mathrm{)}$, that it holds
\begin{equation*}
\Phi \left( \phi ^{\prime },\phi ^{\prime \prime },y,z\right) \leq C\left(
1+||\phi ^{\prime }||_{T}^{2p}+||\phi ^{\prime \prime
}||_{T}^{2p}+||y||_{T}^{2}+\left\vert z\right\vert ^{2}\right) ,
\end{equation*}%
where in what follows we will denote by $C$ several possibly different
constants depending only on $K$, $L$, $M$ and $T$. Then, for all $t\in
\lbrack 0,T]$, $\phi ^{\prime },\phi ^{\prime \prime }\in \mathbb{\Lambda },$
we have, from the a priori estimates on the processes $Y^{t,\phi }$ and $%
Z^{t,\phi }$,%
\begin{equation*}
\mathbb{E}\left[ \int_{0}^{T}\Phi \left( r,X^{t,\phi ^{\prime }},X^{t,\phi
^{\prime \prime }},Y^{t^{\prime },\phi },Z^{t^{\prime },\phi }(r)\right) dr%
\right] ^{p^{\prime }}\leq C\left( 1+||\phi ^{\prime }||_{T}^{pp^{\prime
}}+||\phi ^{\prime \prime }||_{T}^{pp^{\prime }}\right) \,.
\end{equation*}

Let now $\mathcal{U}_{\epsilon}$ be the image of $[0,T]\times\mathcal{K}%
_{\epsilon}^{\prime}$ through the continuous application $\left( r,y\right)
\mapsto(y(r),y_{r})\,,$ and we also have that $\mathcal{U}$ is compact in $%
\mathbb{R}\times L^{2}\left( \left[ -\delta,0\right] ;\mathbb{R}\right) $.

For arbitrary $\epsilon ^{\prime },\kappa >0$, we see that,%
\begin{equation*}
\begin{array}{l}
\displaystyle\mathbb{E}\int_{0}^{T}\Phi \left( r,X^{t,\phi },X^{t,\phi
^{\prime }},Y^{t^{\prime },\phi },Z^{t^{\prime },\phi }(r)\right) dr\medskip
\\
\displaystyle\leq \mathbb{E}\int_{0}^{T}\Phi \left( X^{t,\phi },X^{t,\phi
^{\prime }},Y^{t^{\prime },\phi },Z^{t^{\prime },\phi }(r)\right) \medskip
\\
\displaystyle\quad \quad \quad \cdot \mathbb{1}_{\left\{ (X^{t,\phi
},Y^{t^{\prime },\phi }),(X^{t,\phi ^{\prime }},Y^{t^{\prime },\phi })\in
\mathcal{K}_{\epsilon }\mathcal{\times K}_{\epsilon }^{\prime
},|Z^{t^{\prime },\phi }(r)|\leq \kappa ,||X^{t,\phi }-X^{t,\phi ^{\prime
}}||_{T}\leq \epsilon ^{\prime }\right\} }dr\medskip \\
\displaystyle\quad +\mathbb{E}\int_{0}^{T}\Phi \left( X^{t,\phi },X^{t,\phi
^{\prime }},Y^{t^{\prime },\phi },Z^{t^{\prime },\phi }(r)\right) \mathbb{1}%
_{\left\{ (X^{t,\phi },Y^{t^{\prime },\phi })\not\in \mathcal{K}_{\epsilon }%
\mathcal{\times K}_{\epsilon }^{\prime }\right\} }dr\medskip \\
\displaystyle\quad +\mathbb{E}\int_{0}^{T}\Phi \left( X^{t,\phi },X^{t,\phi
^{\prime }},Y^{t^{\prime },\phi },Z^{t^{\prime },\phi }(r)\right) \mathbb{1}%
_{\left\{ (X^{t,\phi },Y^{t^{\prime },\phi })\not\in \mathcal{K}_{\epsilon }%
\mathcal{\times K}_{\epsilon }^{\prime }\right\} }dr\medskip \\
\displaystyle\quad +\mathbb{E}\int_{0}^{T}\Phi \left( X^{t,\phi },X^{t,\phi
^{\prime }},Y^{t^{\prime },\phi },Z^{t^{\prime },\phi }(r)\right) \mathbb{1}%
_{\left\{ |Z^{t^{\prime },\phi }(r)|>\kappa \right\} }dr\medskip \\
\displaystyle\quad +\mathbb{E}\int_{0}^{T}\Phi \left( X^{t,\phi },X^{t,\phi
^{\prime }},Y^{t^{\prime },\phi },Z^{t^{\prime },\phi }(r)\right) \mathbb{1}%
_{\left\{ ||X^{t,\phi }-X^{t,\phi ^{\prime }}||_{T}>\epsilon ^{\prime
}\right\} }dr%
\end{array}%
\end{equation*}%
and therefore%
\begin{equation*}
\begin{array}{l}
\displaystyle\mathbb{E}\int_{0}^{T}\Phi \left( r,X^{t,\phi },X^{t,\phi
^{\prime }},Y^{t^{\prime },\phi },Z^{t^{\prime },\phi }(r)\right) dr\medskip
\\
\displaystyle\leq Tm_{h,f}\left( \epsilon ^{\prime },\mathcal{K}_{\epsilon },%
\mathcal{U}_{\epsilon },\kappa \right) +2\left\{ \mathbb{E}\left[
\int_{0}^{T}\Phi \left( r,X^{t,\phi },X^{t,\phi ^{\prime }},Y^{t^{\prime
},\phi },Z^{t^{\prime },\phi }(r)\right) dr\right] ^{p^{\prime }}\right\}
^{1/p^{\prime }}\epsilon ^{1-\frac{1}{p^{\prime }}}\medskip \\
\displaystyle\quad +C\mathbb{E}\left[ \left( 1+||X^{t,\phi
}||_{T}^{2p}+||X^{t,\phi ^{\prime }}||_{T}^{2p}+||Y^{t^{\prime },\phi
}||_{T}^{2}\right) \right] \int_{0}^{T}\mathbb{1}_{\left\{ |Z^{t^{\prime
},\phi }(r)|>\kappa \right\} }dr\medskip \\
\displaystyle\quad +C\mathbb{E}\int_{0}^{T}\left\vert Z^{t^{\prime },\phi
}(r)\right\vert ^{2}\mathbb{1}_{\left\{ |Z^{t^{\prime },\phi }(r)|>\kappa
\right\} }dr\medskip \\
\displaystyle\quad +\left\{ \mathbb{E}\left[ \int_{0}^{T}\Phi \left(
r,X^{t,\phi ^{\prime }},X^{t,\phi ^{\prime \prime }},Y^{t^{\prime },\phi
},Z^{t^{\prime },\phi }(r)\right) dr\right] ^{p^{\prime }}\right\}
^{1/p^{\prime }}~\left[ \mathbb{P}\left( ||X^{t,\phi }-X^{t,\phi ^{\prime
}}||_{T}>\epsilon ^{\prime }\right) \right] ^{1-\frac{1}{p^{\prime }}%
}\medskip \\
\displaystyle\leq Tm_{h,f}\left( \epsilon ^{\prime },\mathcal{K}_{\epsilon },%
\mathcal{U},\kappa \right) +C\,\mathbb{E}\int_{0}^{T}|Z^{t^{\prime },\phi
}(r)|^{2}\mathbb{1}_{\left\{ |Z^{t^{\prime },\phi }(r)|>\kappa \right\}
}dr\medskip \\
\displaystyle\quad +C\left( 1+||\phi ||_{T}^{p}+||\phi ^{\prime
}||_{T}^{p}\right) \Big[\epsilon ^{1-\frac{1}{p^{\prime }}}+\frac{\mathbb{E}%
||X^{t,\phi }-X^{t,\phi ^{\prime }}||_{T}^{p^{\prime }-1}}{\left( \epsilon
^{\prime }\right) ^{p^{\prime }-1}}+\frac{\big(\mathbb{E}\int_{0}^{T}|Z^{t^{%
\prime },\phi }(r)|^{2}dr\big)^{1/2}}{\kappa ^{2}}\Big].%
\end{array}%
\end{equation*}%
Therefore we have%
\begin{equation*}
\begin{array}{l}
\displaystyle\sup_{t\in \lbrack 0,T]}\mathbb{E}\int_{0}^{T}\Phi \left(
r,X^{t,\phi },X^{t,\phi ^{\prime }},Y^{t^{\prime },\phi },Z^{t^{\prime
},\phi }(r)\right) dr\medskip \\
\displaystyle\leq Tm_{h,f}\left( \epsilon ^{\prime },\mathcal{K}_{\epsilon },%
\mathcal{U}_{\epsilon },\kappa \right) +C\mathbb{E}\int_{0}^{T}|Z^{t^{\prime
},\phi }(r)|^{2}\mathbb{1}_{\left\{ |Z^{t^{\prime },\phi }(r)|>\kappa
\right\} }dr\medskip \\
\displaystyle\quad +C\left( 1+\left\Vert \phi \right\Vert
_{T}^{p}+\left\Vert \phi ^{\prime }\right\Vert _{T}^{p}\right) \Big[\epsilon
^{1-\frac{1}{p^{\prime }}}+\frac{\left( 1+||\phi ||_{T}^{p-1}+||\phi
^{\prime }||_{T}^{p-1}\right) \mathbb{E}||\phi -\phi ^{\prime
}||_{T}^{p^{\prime }-1}}{\left( \epsilon ^{\prime }\right) ^{p^{\prime }-1}}+%
\frac{1}{\kappa ^{2}}\Big].%
\end{array}%
\end{equation*}%
Passing now to the limit as $\phi ^{\prime }\rightarrow \phi $, $\epsilon
^{\prime }\rightarrow 0$, $\left( \epsilon ,\kappa \right) \rightarrow
(0,+\infty )$, we obtain the claim.

Concerning the continuity of $\left[ 0,T\right] \ni t\rightarrow u\left(
t,\phi \right) $, this is an immediate consequence of the continuity of the
stochastic process $Y^{t,\phi }$, together with the continuity of the
mapping $t\mapsto Y^{t,\phi }$ from $[0,T]$ into $\mathcal{S}_{0}^{2,1}$%
.\hfill \bigskip
\end{proof}

\section*{Acknowledgement}

The author L. Maticiuc thanks the Department of Computer Science, University
of Verona, for its hospitality. The work of this author was supported by
grant \textquotedblleft Deterministic and stochastic systems with state
constraints\textquotedblright , code $241/05.10.2011.\medskip $

The fourth named author gratefully acknowledges the support of the 2010 PRIN
project: \textquotedblleft Equazioni di evoluzione stocastiche con controllo
e rumore al bordo\textquotedblright .

\bigskip

\addcontentsline{toc}{chapter}{Acknowledgement}

\addcontentsline{toc}{chapter}{References}

\end{document}